%%%%%%%% ICML 2024 EXAMPLE LATEX SUBMISSION FILE %%%%%%%%%%%%%%%%%

\documentclass{article}

% Recommended, but optional, packages for figures and better typesetting:
\usepackage{microtype}
\usepackage{graphicx}
\usepackage{booktabs} % for professional tables

% hyperref makes hyperlinks in the resulting PDF.
% If your build breaks (sometimes temporarily if a hyperlink spans a page)
% please comment out the following usepackage line and replace
% \usepackage{icml2024} with \usepackage[nohyperref]{icml2024} above.
\usepackage{hyperref}
\usepackage{xcolor}  
\usepackage{bm}
\usepackage{comment}
\usepackage[thinc]{esdiff}
\usepackage{float}
\usepackage{flafter}
\usepackage{subcaption}
\usepackage{bbm}
\usepackage{multirow}
% Attempt to make hyperref and algorithmic work together better:

% Use the following line for the initial blind version submitted for review:
\usepackage[accepted]{icml2024}

% If accepted, instead use the following line for the camera-ready submission:
% \usepackage[accepted]{icml2024}

% For theorems and such
\usepackage{amsmath}
\usepackage{amssymb}
\usepackage{mathtools}
\usepackage{amsthm}
\usepackage{enumitem}
\usepackage{mathtools}
\usepackage{graphicx}
% if you use cleveref..
\usepackage[capitalize,noabbrev]{cleveref}

%%%%%%%%%%%%%%%%%%%%%%%%%%%%%%%%
% THEOREMS
%%%%%%%%%%%%%%%%%%%%%%%%%%%%%%%%
\theoremstyle{plain}
\newtheorem{theorem}{Theorem}[section]
\newtheorem{thm}[theorem]{Theorem}

\newtheorem{lemma}[theorem]{Lemma}
\newtheorem{corollary}[theorem]{Corollary}
\theoremstyle{definition}

\newtheorem{assumption}[theorem]{Assumption}
\theoremstyle{remark}

% Todonotes is useful during development; simply uncomment the next line
%    and comment out the line below the next line to turn off comments
%\usepackage[disable,textsize=tiny]{todonotes}
\usepackage[textsize=tiny]{todonotes}

\DeclareMathOperator*{\argmin}{argmin}

\newcommand{\tp}{\intercal}

\newcommand{\bigO}{\ensuremath{\mathop{}\mathopen{}\mathcal{O}\mathopen{}}}
\newcommand{\smallO}{ \scalebox{0.7}{$\mathcal{O}$}}
\newcommand{\bigOp}{\bigO_\mathrm{p}}

\newcommand{\given}{{\mid}}

\newcommand{\EE}{{\mathbb{E}}}

\newcommand{\calA}{{\mathcal{A}}}

\newcommand{\calD}{{\mathcal{D}}}
\newcommand{\calE}{{\mathcal{E}}}

\newcommand{\calL}{{\mathcal{L}}}
\newcommand{\calM}{{\mathcal{M}}}
\newcommand{\calN}{{\mathcal{N}}}

\newcommand{\calP}{{\mathcal{P}}}
\newcommand{\calQ}{{\mathcal{Q}}}
\newcommand{\calR}{{\mathcal{R}}}
\newcommand{\calS}{{\mathcal{S}}}

\newcommand{\quest}[1]{\textbf{Q#1}}
\newcommand{\dist}{{\rho}}

 % the symbol P for probability used the sans serif letter
 % the symbol Cov for covariance used the sans serif letter
\def\Var{\mathrm{Var}} % the symbol Var for covariance used the sans serif letter

% The \icmltitle you define below is probably too long as a header.
% Therefore, a short form for the running title is supplied here:
\icmltitlerunning{A Fine-grained Analysis of Fitted Q-evaluation}

\begin{document}

\twocolumn[
%\icmltitle{Submission and Formatting Instructions for \\
%          International Conference on Machine Learning (ICML 2024)}
\icmltitle{A Fine-grained Analysis of Fitted Q-evaluation: Beyond Parametric Models} %placeholder
%\icmltitle{On the Horizon Dependence of Off-policy Fitted Q-evaluation}
%\icmltitle{Nonparametric Off-policy Fitted Q-evaluation: Sample Size and Horizon Dependence}

% It is OKAY to include author information, even for blind
% submissions: the style file will automatically remove it for you
% unless you've provided the [accepted] option to the icml2024
% package.

% List of affiliations: The first argument should be a (short)
% identifier you will use later to specify author affiliations
% Academic affiliations should list Department, University, City, Region, Country
% Industry affiliations should list Company, City, Region, Country

% You can specify symbols, otherwise they are numbered in order.
% Ideally, you should not use this facility. Affiliations will be numbered
% in order of appearance and this is the preferred way.
% \icmlsetsymbol{equal}{*}

\begin{icmlauthorlist}
\icmlauthor{Jiayi Wang}{yyy1}
\icmlauthor{Zhengling Qi}{yyy3}
\icmlauthor{Raymond K. W. Wong}{yyy2}
%\icmlauthor{}{sch}
%\icmlauthor{}{sch}
\end{icmlauthorlist}

\icmlaffiliation{yyy1}{Department of Mathematical Sciences, University of Texas at Dallas, Richardson, USA}
\icmlaffiliation{yyy2}{Department of Statistics, Texas A\&M University, College Station, USA}
\icmlaffiliation{yyy3}{School of Business, The George Washington University, Washington, D.C., USA}

\icmlcorrespondingauthor{Zhengling Qi}{qizhengling@email.gwu.edu}
% \icmlcorrespondingauthor{Firstname2 Lastname2}{first2.last2@www.uk}

% You may provide any keywords that you
% find helpful for describing your paper; these are used to populate
% the "keywords" metadata in the PDF but will not be shown in the document
\icmlkeywords{Machine Learning, ICML}

\vskip 0.3in
]

% this must go after the closing bracket ] following \twocolumn[ ...

% This command actually creates the footnote in the first column
% listing the affiliations and the copyright notice.
% The command takes one argument, which is text to display at the start of the footnote.
% The \icmlEqualContribution command is standard text for equal contribution.
% Remove it (just {}) if you do not need this facility.

\printAffiliationsAndNotice{}  % leave blank if no need to mention equal contribution
% \printAffiliationsAndNotice{\icmlEqualContribution} % otherwise use the standard text.

\begin{abstract}
In this paper, we delve into the statistical analysis of the fitted Q-evaluation (FQE) method, which focuses on estimating the value of a target policy using offline data generated by some behavior policy.
We provide a comprehensive theoretical understanding of FQE estimators under both parameteric and nonparametric models on the $Q$-function. Specifically, we address three key questions related to FQE that remain largely unexplored in the current literature: (1) Is the optimal convergence rate for estimating the policy value regarding the sample size $n$ ($n^{-1/2}$) achievable for FQE under a non-parametric model with a fixed horizon ($T$)? (2) How does the error bound depend on the horizon $T$? (3) What is the role of the probability ratio function in improving the convergence of FQE estimators?
Specifically, we show that under the completeness assumption of $Q$-functions, which is mild in the non-parametric setting, the estimation errors for policy value using both parametric and non-parametric FQE estimators can achieve an optimal rate in terms of $n$. The corresponding error bounds in terms of both $n$ and $T$ are also established. With an additional realizability assumption on ratio functions, the rate of estimation errors can be improved from $T^{1.5}/\sqrt{n}$ to $T/\sqrt{n}$, which matches the sharpest known bound in the current literature under the tabular setting.
\end{abstract}

\section{Introduction}
\label{sec:intro}

In reinforcement learning (RL), off-policy evaluation (OPE) is an important topic that focuses on estimating the expected total reward (e.g., the value defined in \eqref{def: integrated value fun})  of a target policy based on data collected from a potentially different and unknown policy \citep{sutton2018reinforcement}.
OPE is particularly useful in high-stakes domains where the implementation of a new policy can incur significant costs or risks, and has been extensively studied in RL \citep[e.g.][]{xie2019towards, duan2020minimax,yin2020asymptotically,chen2022well,ji2022sample,wang2023projected}. See \citet{uehara2022review} for an overview of the OPE methods.
Among various algorithms for OPE, fitted Q-evaluation (FQE) is arguably one of the most popular algorithms. In FQE, $Q$-functions (defined in \eqref{def: Q fun}) are estimated in a backward manner using supervised learning methods, and their estimates are then used to construct an estimated policy value (see \eqref{eqn:FQE-value}).  FQE has demonstrated significant empirical success in many applications \citep{fu2021benchmarks, voloshin2019empirical}. Its popularity and success have also led to significant theoretical interest in FQE.
Several recent studies aim to provide theoretical justifications for its effectiveness (as discussed in detail in Section \ref{sec:literature}).
In this work, we delve deeply into the analysis of FQE estimators within the framework of a finite-horizon, time-inhomogeneous Markov Decision Process (MDP).
Compared to existing analysis in FQE, our objective is to provide a more comprehensive understanding of the convergence rate on the error bound of estimating the value of a target policy under both parametric and nonparametric models
in terms of both the number of episodes $n$ and the horizon $T$. Specifically, we seek to address the following three fundamental questions related to FQE:
\vspace*{-2mm}
\begin{itemize}[leftmargin=*]
  \item \quest{1}: For the fixed horizon $T$, how does the convergence rate depend on the number of episodes $n$ given the completeness assumption for $Q$-functions? Is the optimal convergence rate ($n^{-1/2}$) still achievable under nonparametric models of $Q$-functions?
    \item \quest{2}: 
    How does the convergence rate depend on the growing horizon $T$?
    %What is the dependence of the convergence rate with respect to a growing horizon $T$?
    \item \quest{3}: What is the role of the probability ratio functions $w_t^\pi$ (defined in \eqref{eqn:def_w}) in improving the  convergence rate for FQE estimators?
\end{itemize}
\vspace*{-2mm}
We will comment on these questions and the existing progress towards addressing them in the following subsection.

\subsection{Related Literature}
\label{sec:literature}
In recent years, many works have studied FQE from a theoretical perspective with the goal to address \quest{1} and \quest{2}, with varying degrees of success under different modeling assumptions. Below, we survey these efforts and indicate that some important understanding is still lacking.

The first line of research focuses on studying FQE under some parametric model on $Q$-functions.  For example, \citet{duan2020minimax} assumed linear MDP and established a $T^2/\sqrt{n}$ order for the estimation error of the policy value. See Theorem 2 in \citet{duan2020minimax}.  
Furthermore, \citet{zhang2022off} studied beyond linear MDP and their analysis allows that $Q$-functions lie in an almost arbitrarily parametrized function class with some differentiability condition.  They obtained the same order for the estimation error as \citet{duan2020minimax}. In those aforementioned works, they showed that the optimal convergence rate with respect to $n$ can be achieved by using FQE under a parametric model of $Q$-functions. Regarding \quest{2}, they obtain a quadratic dependence with respect to $T$. However, as shown in \citet{yin2020asymptotically}, a linear and thus better dependence of $T$ can be achieved under the tabular setting. 
%It is unclear in the current literature if this gap in horizon dependence can be closed.
\textit{This leads to an interesting question that whether linear horizon dependence can be obtained beyond tabular setting.}

The second line of research considers FQE under some nonparametric models of $Q$-function. Specifically,
\citet{nguyen2021sample} provided an error bound for the estimation error of nonparametric FQE using feed-forward ReLU network. They showed that the estimation error is of an order $T^{2-\alpha/(2\alpha+2D)}n^{-\alpha/(2\alpha+2D)}$, where $\alpha$ is a smoothness parameter and $D$ is the dimension of state and action. 
Compared to \citet{nguyen2021sample}, \citet{ji2022sample} further assumed a low-dimensional manifold structure in convolutional neural networks and improved the error bound to an order of $T^2n^{-\alpha/(2\alpha+d)}$, where $d$ is the intrinsic dimension of the state-action space.
Regarding \quest{1}, these two works are only able to show a slower convergence rate of estimating the policy value than the optimal $n^{-1/2}$ with fixed  $T$. \textit{Hence, it is unclear if $n^{-1/2}$ rate is achievable for nonparametric FQE based on the current literature.}
We will provide a positive result to this question in this paper.
For \quest{2}, \citet{nguyen2021sample} showed the horizon dependence is of an order that is larger than $T^{1.5}$  and \citet{ji2022sample}  showed a quadratic dependence. \textit{Similarly to the parametric methods, it is unclear if the linear dependence of the horizon can be obtained under non-parametric models.}

%They also show that the error bound has a polynomial dependence with respect to $T$.

%\ray{It might be good to mentiont there is a equivalence that like in Section 3.2 of \cite{duan2020minimax}. Indeed, we explain this equivalence around/after Section 2.2. This might allow us to transit smoothly to MIS estimator and the discussion of ratio functions and \quest{3}; we need to have some discussion about \quest{3}, and get the readers excited about this question.}

% In Section 3.3 in \cite{duan2020minimax}, they establish a connection between FQE estimators and marginalized importance sampling (MIS) estimators (refer to the form in \eqref{eqn:MIS}).  In the literature, 
% there are several works focuses on studying 
% marginal importance sampling (or probability ratio functions) to tackle distribution mismatch arising from differences between the target policy and the behavior policy. 
Besides FQE, another line of research uses marginal importance sampling (MIS) or probability ratio
functions to address the distributional mismatch due to the difference between the target policy and
the behavior policy and develop an MIS  estimator for OPE.
In particular, for time-inhomogeneous settings and for tabular MDP, \citet{xie2019towards}  and \citet{yin2020asymptotically} showed that the estimation error of the MIS estimator has an order of $T/\sqrt{n}$ under some proper assumptions. To the best of our knowledge, this is the sharpest bound with respect to both $n$ and $T$ in the existing literature.  
Building on the insights discussed in Section 3.3 of \citet{duan2020minimax}, a connection between FQE estimators and marginalized importance sampling (MIS) estimators (refer to the forms in \eqref{eqn:MIS} and \eqref{eqn:weights}) can be established.
Given this connection, one naturally wonders
if the successful analysis of MIS estimator (under tabular setting) can be leveraged to understand the horizon dependence for the convergence rate of FQE \textit{beyond tabular setting}.
Specifically, we ask the following question: 
is the linear dependence of $T$ in the convergence rate of FQE estimators achievable due to the connection with MIS estimator?
Moreover, in the context of a continuous state space with some nonparametric model, we further ask: whether any conditions for the marginalized sampling weights (or the probability ratio function) need to be imposed in order to achieve such sharp dependence of $T$ and how such conditions contribute to addressing the convergence of FQE estimators (\quest{3}).
We will address all these three intriguing but rarely studied questions, contributing to improving the understanding of FQE.
%This work is dedicated to providing answers to these intriguing questions.

%{Some double robust estimations have been developed to combine the estimations of both $Q$-functions and probability ratio functions \citep{jiang2016doubly,kallus2020double} and achieve better convergence results. These estimators usually concerns with the infinite-horizon and time-homogeneous setting where the probability ratio function is easier to estimate.}
%\ZL{we probably need to explain why we do not analyze this estimator if we talk about the doubly robust estimator}

% \onecolumn
\begin{table*}[t]
\caption{Comparison on the error bound for the first-order term in existing works. $\kappa$ is defined in  \eqref{eqn:kappa}.
$\tilde{\kappa}$ is the upper bound for the probability ratio functions; $D$ is the dimension of space and action. $d$ is the intrinsic dimension of the state-action space. Some logarithmic orders are omitted in the error bounds.}
\label{sample-table}
\vskip 0.15in
\begin{center}
\begin{small}
\begin{sc}
\begin{tabular}{l cccr}
\toprule
Work & Parametric? & Regularity on $Q$  & Error Bound (W.H.P) \\
\midrule
\citet{yin2020asymptotically}    & $\surd$ & Tabular & $\bigO( T \tilde{\kappa} \sqrt{1/n}  )$  \\
\hline \\
\citet{duan2020minimax} & $\surd$ & Linear & $\bigO( T^2\sqrt{ \kappa /n} )$ \\
\hline \\
\citet{zhang2022off} & $\surd$ &  Differentiable & $\bigO( T^2\sqrt{\kappa /n} )$\\
\hline \\
\citet{nguyen2021sample} & $\times$ & Besov  &  $\bigO(T^{2-\alpha/(2\alpha+2D)} \tilde{\kappa}n^{-\alpha/(2\alpha+2D)})$       \\
\hline \\
\citet{ji2022sample}     & $\times$ & Besov & $\bigO(T^2 \kappa n^{-\alpha/(2\alpha+d)})$\\
\hline \\
\multirow{2}{*}{\textbf{Our Work} in Section \ref{sec:para} }  &  \multirow{2}{*}{$\surd$}   &  \multirow{2}{*}{Linear} & $\bigO(T^{1.5} \sqrt{\kappa/n})$\\
 &    &    & $\bigO(T \tilde{\kappa} \sqrt{1/n})$ when $w_t^\pi$ are linear\\
 \hline\\
 \multirow{2}{*}{\textbf{Our Work} in Section \ref{sec:non_para}}   & \multirow{2}{*}{$\times$} & \multirow{2}{*}{H{\"o}lder} &  $\bigO(T^{1.5} \sqrt{\kappa/n})$ when $Q_t^\pi$ are smooth enough \\
 &  & & $\bigO(T \tilde{\kappa} \sqrt{1/n})$ when $w_t^\pi$ are H{\"o}lder\\
\bottomrule
\end{tabular}
\end{sc}
\end{small}
\end{center}
\vskip -0.1in
\end{table*}
% \twocolumn

\subsection{Our Contributions} 

% \ZL{Is our work considered as the first to study when $T$ grows with n?}

% \ZL{I am confused by the $O_p$ notation.}

% \jiayi{Maybe move to the place after the questions?}
In this work, we focus on addressing the three aforementioned questions in the context of FQE where $Q$-functions are estimated under either a parametric linear model, or a non-parametric model via linear sieves \citep{ai2003efficient}. In particular, we successfully establish the following results:
\vspace*{-2mm}
\begin{enumerate}[leftmargin=*]
    \item For fixed $T$,  FQE estimators with $Q$-functions modeled parametrically is shown to achieve the optimal convergence rate with respect to $n$ ($n^{-1/2}$).   When $Q$-functions are modeled nonparametrically, $n^{-1/2}$ rate can be still obtained when $Q$-functions satisfy certain smoothness conditions.  In contrast, under these conditions, the estimation for $Q$-functions only has a convergence rate slower than $n^{-1/2}$.
    \item For asymptotically growing $T$, the first-order term of the error bound is shown to have a dependence of $T^{3/2}$ while the higher order term has a dependence of $T^3$ for both parametric and nonparametric FQE. Our bound has a milder dependence with respect to $T$ for the first-order term compared to current literature, where their bounds have a quadratic dependence of $T$.
    \item 
    % When the space of probability ratio functions can be approximated well by the modeling space for $Q$-functions,  
    When probability ratio functions lie in a space of smooth functions,
    %\textit{without incorporating any additional estimating procedures for these probability ratio functions like those double robust estimators},
    \textit{without additionally estimating these probability ratio functions (like those double robust estimators)},
    the first-order term of the error bound for the vanilla FQE estimators is shown to converge with an order of $T/\sqrt{n}$ in both parametric and nonparametric settings. This bound matches with the sharpest rate of convergence in the tabular setting.
\end{enumerate}
\vspace*{-2mm}

%what if nonlinear modeling? 

\section{Set Up}
\label{sec:setup}
To set the stage for our theoretical discussion, we review the framework of discrete-time inhomogeneous Markov Decision Processes (MDP) and the fitted Q-evaluation (FQE) for estimating policy value in this section.

\subsection{Preliminary}
\label{sec: preliminary}
Denote by $\calM = ( T,  \tilde{\mathcal{S}},  \tilde \calA, 
\tilde{\mathrm{Pr}}, \tilde{\calR})$ a finite-horizon episodic  Markov Decision Process (MDP),  
%an episodic stochastic process, 
where the integer $T$ %defined as 
is the length of horizon, $\tilde {\mathcal{S}} =\{\mathcal{S}_t\}_{t=1}^T$ and $\tilde {\calA }= \{\calA_t\}_{t=1}^T$ are the state spaces and the action spaces across $T$ decision points respectively, $\tilde{\mathrm{Pr}} = \{\mathrm{Pr}_t\}_{t=1}^T$ with $\mathrm{Pr}_t(\bullet \mid s,a)$ representing the transition kernel (probability) %from time $t$ to the next time point $t+1$, 
given the state $s \in \calS_t$ and the action $a \in \calA_t$,
and $\calR = \{R_t\}_{t=1}^T$ are immediate rewards such that $R_t \given S_t = s, A_t = a \sim R_t(s,a)$. %with conditional distribution given $S_t =$
%with the random scalar reward $r \sim R_t(s,a)$ at every step $t$. %We assume that there exists reward functions $\{r_t\}_{t=1}^T$ where $r_t \in \mathbb{R}^{\mathcal{S}_t\times \mathcal{A}_t}$, such that\\
% $
% \EE_t\left[R_t \given A_t = a, S_t = s, \left\{S_j, A_j, R_j\right\}_{0 \leq j < t}\right] = r_t(s, a),
% $
We take $r_t(s,a)$ as the conditional mean of $R_t(s,a)$.
A trajectory generated from $\calM$ can be written as $\{S_t, A_t, R_t\}_{t=1}^T$, where $S_t\in\calS_t$, %corresponds to the state variables observed at time $t$,
$A_t\in\calA_t$ and $R_t\in \mathbb{R}$ denote the state, the action and the reward at time $t$ respectively. 
%  Without loss of generality, we assume $\calS_1=\cdots = \calS_T = \calS$ and $\calA_1=\cdots = \calA_T = \calA$.
Without loss of generality, we assume $|R_t| \leq 1$ for $t=1,\dots, T$.
In the following discussion, for the sake of simplicity, we assume the same state spaces and action spaces across all decision points, denoted by $\mathcal{S} \subset \mathbb{R}^d$ and $\mathcal{A}$, respectively. Additionally, we assume the action space $\mathcal{A}$ is finite.

A policy is defined as a way of choosing actions at each decision time point $t$. More specifically, denote a target policy as $\pi = \{\pi_t\}_{t=1}^T$, where $\pi_t$ is a function mapping from the state space $\calS$ to a probability mass function over the action space $\calA$.  
%In the offline RL setting, one of the fundamental tasks is to estimate a target policy's expected value function based on the pre-collected (batch) data. Given a policy $\pi$, the value function is defined as
Then OPE aims to estimate the value of $\pi$ defined as
% \begin{align}\label{def: value fun}
% 	V^\pi = \EE^\pi\left[\sum_{t = 1}^{T} R_t \given S_1 = s\right],
% \end{align}
\begin{align}\label{def: integrated value fun}
	\nu(\pi) =  \EE^\pi\left[\sum_{t = 1}^{T} R_t\right],
\end{align}
% , i.e., the expectation of the value function, defined as
using the pre-collected data generated by a fixed stationary policy $\pi^b = \{\pi^b_t\}_{t=1}^T$, which is called behavior policy. Here $\EE^\pi$ denotes the expectation with respect to the distribution whose actions are generated by the target policy $\pi$.
% where $\mathbb G$ denotes a reference distribution over $\calS$. In RL literature, $\mathbb G$ is typically assumed \textit{known} or is commonly adopted as the distribution of $S_1$. 
We further assume that the pre-collected training data consist of $n$ independent and identically distributed  
trajectories as $\calD_n = \left\{ \left\{\left(S_{i, t}, A_{i, t}, R_{i, t}\right)\right\}_{1 \leq t < {T}} \right\}_{1 \leq i \leq n}$. %, denoted by
% $$
% \calD_n = \left\{ \left\{\left(S_{i, t}, A_{i, t}, R_{i, t}\right)\right\}_{1 \leq t < {T}} \right\}_{1 \leq i \leq n},
% $$
%We assume $\calD_n$ is generated by a fixed stationary policy $\pi^b = \{\pi^b_t\}_{t=1}^T$. 
% We  make
% the following assumption on the data generating mechanism.
% \begin{assumption}\label{ass: DGP}
% 	The training data $\calD_n$ is generated by a fixed stationary policy $\pi^b = \{\pi^b_t\}_{t=1}^T$.
% \end{assumption}
% Under Assumptions \ref{ass: Markovian} and \ref{ass: DGP}, $\left\{S_t, A_t\right\}_{t\geq 0}$ forms a discrete time time-inhomogeneous Markov chain. 
%In the literature, $\pi^b$ is usually called the behavior policy, which may not be known.
For convenience, we omit $\pi^b$ in the superscript in the notation of the expectation and probability under the distribution induced by $\pi^b$. 

Next we define notation for several important probability distributions.
 For any $t=1,\dots, T$, we take $\dist_t^\pi(s, a)$ and $\dist_t^{b}(s,a)$ as the marginal density of $(S_t, A_t)$ at $(s, a) \in \calS \times \calA$ under the target policy $\pi$ and behavior policy $\pi^b$ respectively. And we define the probability ratio function $w^\pi_t$ as 
\begin{align}
	\label{eqn:def_w}
	w^\pi_t(s,a) = {\dist_t^\pi(s,a)}/{\dist_t^b(s,a)}, 	
\end{align}
for $ t=1,\dots, T$. % and denote $\mathcal{E}_t^\pi f = \int_{(s,a) \sim d_t^\pi} f(s,a) d(s,a)$.
 \subsection{Fitted Q-evaluation}
 \label{sec:FQE}
% A lot of existing methods
% directly estimate 
%  \citep[e.g.,][]{luckett2019estimating,shi2020statistical}
One can find $\nu(\pi)$ by computing the state-action value functions (also known as the $Q$-functions)
 defined as
 \begin{align}\label{def: Q fun}
  Q^\pi_t(s, a) = \EE^\pi\left[\sum_{t' = t}^{T}R_t \given S_t = s, A_t = a\right],
 \end{align}
for $t= 1,\dots, T$. Then $\nu(\pi) = \int_{(s,a) \in \calS \times \calA} Q_1^\pi(s,a) \dist^\pi_1(s,a) d(s,a)$. For simplicity, we assume the initial state distribution $\dist^\pi_1$ is known.
%  As we can see from \eqref{def: value fun} and \eqref{def: integrated value fun},
%  \begin{equation}\label{eq: value function}
%  \calV(\pi)= (1- \gamma)\int_{s\in \mathcal{S}}\sum_{a \in \calA} \pi(a \mid s) Q^\pi(s, a)\mathbb G(ds).
%  \end{equation}
 To compute the $Q$-function, it is well-known that $\{Q^\pi_t\}_{t=1}^T$ satisfy the following Bellman equation
 \begin{align}\label{eq: Bellman equation for Q}
     Q^\pi_t(s, a) = \EE\left[R_t +  V^\pi_{t+1}(S_{t+1}) \given S_t = s, A_t = a\right], 
 \end{align}
 %for $ t=1,\dots, T$, 
 where $V^\pi_t(s) =  \sum_{a \in \calA} \pi_t(a' \given s)Q^\pi_t(s, a')$.

Motivated by \eqref{eq: Bellman equation for Q}, in fitted Q-evaluation (FQE), one can utilize the offline data $\calD_n$ and recursively apply a regression technique to learn ${Q}^\pi_T, {Q}^\pi_{T-1}, \dots, {Q}^\pi_1$ in a sequential and backward order. More specifically, let $\hat{Q}^\pi_{T+1} = 0$, and for $t = T, T-1,\dots, 1$, one can compute
\begin{multline}
    \label{eqn:Qest}
    \hat{Q}^\pi_{t} = \argmin_{Q \in \calQ^{(t)}}  \frac{1}{n}\sum_{i=1}^n \Biggl\{ Q(S_{i,t}, A_{i,t}) - \\
	 \biggl[ R_{i,t}+\sum_{a'\in \calA_{t+1}} \pi_t(a\mid S_{i,t+1})\hat{Q}^\pi_{t+1} (S_{i,t+1}, a')\biggr] \Biggr\}^2
\end{multline}
for $Q_t^\pi$,
where $\calQ^{(t)}$ is a hypothesis class for $Q^\pi_t$.
Then the policy value is estimated via a plug-in estimator:
\begin{align}
\label{eqn:FQE-value}
    \hat{\nu}(\pi) = \int_{(s,a) \in \calS \times \calA} \hat{Q}_1^\pi(s,a) \dist^\pi_1(s,a) d(s,a).
\end{align}
% \ZL{I am confused by the above notation}

When considering parametric models, one can use a linear model to approximate $Q_t^\pi$ for every $t$ \citep[e.g.][]{duan2020minimax, min2021variance}.
For example, take $\Psi_{K}(s)=[ \psi_{1}(s),\cdots,\psi_{K}(s)]^\tp$ as a vector consisting of $K$ features for $s \in \calS $.
Let $\phi_{K}(s,a) = [\mathbbm{1}(a=1)\Psi_{K}(s)^\tp ,  \dots, \mathbbm{1}(a=|\mathcal{A}|)\Psi_{K}(s)^\tp ]^\tp\in\mathbb{R}^{K|A|}$. Then 
%$\calQ^{(t)}$ is a linear space such that
one can let $\calQ^{(t)} = \{\phi_K(\cdot)^\tp \beta : \beta \in \mathbb{R}^{K |\calA|}\}$. 
% In this paper, we consider the nonparametric modeling for Q-functions $Q^\pi_t$ and propose to estimate $Q^\pi_t$ based on linear sieves.  
%which takes the form
% $ \hat{Q}^\pi_t(s,a) = \Phi^{(t)}_K(s)^\tp \beta_{a}$ for some $\beta \in \mathbb{R}^K$.
% Take $\Psi^{(t)}_{K_t}(\cdot)=[ \psi^{(t)}_{1}(\cdot),\cdots,\psi^{(t)}_{K_t}(\cdot)]^\tp$ as a vector consisting of $K$ sieve basis functions at time point $t$, where $K_t$ is allowed to depend on $n$ and $T$.  One can take 
%  splines or wavelet bases (see for example, \cite{chen2018optimal}) for choices of basis functions.  Define $\phi^{(t)}_{K_t}(s,a) = [\Psi^{(t)}_{K_t}(s)^\tp \mathbbm{1}(a=1),  \dots, \Psi^{(t)}_{K_t}(s)^\tp \mathbbm{1}(a=|\mathcal{A}_t|)]^\tp$.
%Under well-specification assumptions (i.e., there exists $\beta_t$ such that $Q_t^\pi(\cdot) = \phi_k(\cdot)^\tp \beta_t$ for all $t$), \eqref{eqn:Qest} is equivalent to estimating $\beta_{t} \in \mathbb{R}^{K|\calA|}$ for  $t=1,\dots, T$, where 

% Clearly, with the linear approximation, we can estimate $Q^\pi_t$ via the estimation of the coefficients:
% \begin{multline}
%     \label{eqn:Qest-sieve}
% \hat{\beta}_{t} = \argmin_{\beta \in \mathbb{R}^{K |\calA|}}  \frac{1}{n}\sum_{i=1}^n \Biggl\{ [\phi_{K}(S_{i,t}, A_{i,t}) ]^\tp \beta \\
%   \qquad- \Biggl[ R_{i,t}+\sum_{a'\in \calA} \pi_t(a\mid S_{i,t+1})\hat{Q}^\pi_{t+1} (S_{i,t+1}, a')\Biggr] \Biggr\}^2,  \\
% \text{with} \quad \hat{Q}_t^\pi(s,a) = [\phi_{K}(s,a) ]^\tp \hat{\beta}_{t}.
% \end{multline}

Many existing work study the theoretical property of FQE under the linear model assumption such as $Q_t^\pi \in \mathcal{Q}^{(t)}$ (realizability), and $K$ is fixed \textit{independent of $n$ and $T$}. 
We call this setup the \textit{parametric} setting,
as each $Q^\pi_t$ is modeled via a finite number of parameters.
%We take such linear modeling as an example of the parametric case to be studied.
The realizability of such linear modeling hedges strongly on the careful selection of the features, which is often non-trivial.
To relax this assumption, an infinite-dimensional modeling for $Q^\pi_t$ can be used, which we refer to as a \textit{nonparametric} setting. 
In this work, we assume that for every $a \in \calA$, $Q^\pi_t(\cdot, a)$ lies in a H{\"o}lder space which, roughly speaking, consists of functions $g: \calS \subset \mathbb{R}^d \rightarrow \mathbb{R}$ with H{\"o}lder continuous derivatives of certain order.
Specifically, a H{\"o}lder space is defined as
\begin{multline}
    \Lambda_{\infty}(p,L) = \left\{ g : \sup_{0\leq \|\alpha\|_1 \leq \lfloor p \rfloor} \left\| \partial^\alpha g \right\|_{\infty}  \leq L, \right. \\ \left. \sup_{\alpha:\|\alpha\|_1 \leq \lfloor p \rfloor} \sup_{x,y\in \calS, x\neq y } \frac{|\partial^\alpha g(x) - \partial^\alpha g(y)|}{\|x-y\|_2^{p
    -\lfloor p \rfloor}} \leq L \right\}, \label{eqn:def_holder}   
\end{multline}
where $\|\cdot\|_1$ and $\|\cdot\|_2$ are the $\ell_1$ and $\ell_2$ norm of a vector respectively, $\lfloor p \rfloor$ denotes the integer less or equal to $p$ for $p>0$,  $\alpha = (\alpha_1,\cdots, \alpha_d)$ is a non-negative vector, 
\begin{align*}
    {\partial}^\alpha g (x) = \frac{{\partial}^{\alpha}  g(x) }{\partial^{\alpha_1}\cdots \partial^{\alpha_d}}. 
\end{align*}
%As for estimation, we utilize linear sieves to approximate H{\"o}lder spaces for finite sample.
A standard approach for nonparametric regression is
sieve estimation. 
We study a linear-sieve estimator in our theoretical study of nonparametric FQE, where an increasing number of basis functions are allowed to approximate $Q_t^\pi$ in the H{\"o}lder space, as sample size increases.
%There are many different approaches to formulate valid estimation under nonparametric settings.
% In this work, we focus on the linear sieve approach
% %Built upon this, we use linear sieves as a setup
% for our theoretical study of nonparametric FQE.
%understanding nonparametric modeling.
To be specific, with slight abuse of notation,  take $\Psi_{K}(\cdot)=[ \psi_{1}(\cdot),\cdots,\psi_{K}(\cdot)]^\tp$ as a vector consisting of $K$ sieve basis functions at time point $t$, where the number of basis functions \textit{$K$ is allowed to depend on $n$ and $T$}.  One can take 
 splines or wavelet bases (see for example, \citet{huang1998projection} and \citet{chen2018optimal}) for choices of basis functions.  Take $\phi_{K}(s,a) = [\Psi_{K}(s)^\tp \mathbbm{1}(a=1),  \dots, \Psi_{K}(s)^\tp \mathbbm{1}(a=|\mathcal{A}|)]^\tp$. 
 %Then one can estimate the $Q$ functions via $\hat{Q}^\pi_t = \phi_K(s,a)^\tp \hat{\beta}_t$ with some estimated coefficients $\hat{\beta}_t \in \mathbb{R}^{K |\calA|}$. %Then \eqref{eqn:Qest-sieve} can be still applied to obtain the estimator for $Q_t^\pi$. 
 We approximate $\calQ^{(t)}$ with space $\tilde{\calQ}^{(t)}:= \mathrm{span}\{(\phi_K(\cdot, \cdot)\}$ and solve for \eqref{eqn:Qest} with ${\calQ}^{(t)}$ replaced by  $\tilde{\calQ}^{(t)}$.
 In practice, one can choose different types and numbers of basis functions at different time points. To simplify the notation, we use a universal set of basis functions and a universal number of basis functions.

 Sieve estimations have been extensively studied in statistics and econometric communities with their appealing empirical performance and ease of computation \citep[e.g.][]{geman1982nonparametric, huang1998projection,ai2003efficient,chen2018optimal}. Some recent works in OPE also utilize the linear sieves as a tool to study the nonparametric estimation of $Q$-functions \citep[e.g.][]{shi2022off,chen2022well,wang2023projected}. 
 Many sieve bases can effectively approximate infinite-dimensional spaces that contain a wide range of smooth functions.  For example, (tensor-product) B-spline basis and wavelet basis can approximate H{\"o}lder space well, with the approximation error decreasing as the number of basis functions increases.  See Section 2.2 of \citet{huang1998projection} for detailed discussions on the approximation power of these bases.

 %\ZL{Based on you two discussion, I suggest we have a paragraph to explain how general and flexible linear sieve is as for educational purpose}
%\ray{Note that we might encounter reviewers from a very ML background (especially younger-generation) thinking that linear sieves are not interesting. I suggest that we describe the linear function approximation (parametric) first, which sounds more natural to RL people in ML community despite its strong assumption; it would then sound more acceptable to them when we talk about linear sieves as a setup for understanding non-parametric modeling.}

\subsection{Connection with Marginal Importance Sampling Estimator}
%\ray{Shall we establish the linkage between FQE and MIS here like Section 3.2 of \cite{duan2020minimax}? I think this would be useful for people to see the dual role of ratio and makes our discussion of the well-specification of ratios (\quest{3}) more interesting.}

% \jiayi{Add notations here.}
In this section, we connect FQE with a marginal importance sampling (MIS) estimator, which will help us understand the role of ratio functions in our theoretical analysis in the later sections. To proceed, we introduce some notation. Unless specified otherwise, the following notations apply to both parametric and nonparametric cases. 
Let  
$\Sigma_t = \EE [\phi_{K}(S_t, A_t) \phi_{K}(S_t, A_t)^\tp]\in \mathbb{R}^{K|\calA| \times K|\calA|}$, 
$\hat{\Sigma}_t = \frac{1}{n}\sum_{i=1}^n [\phi_{K}(S_{i,t}, A_{i,t}) \phi_{K}(S_{i,t}, A_{i,t})^\tp]\in \mathbb{R}^{K|\calA| \times K|\calA|}$ and $\Sigma_{t,a} = \EE [\psi_{K}(S_t) \psi_{K}(S_t)^\tp\mid A_t = a] \in \mathbb{R}^{K\times K}$.  %\ray{$\Psi_K$? $\mathbb{R}^{K\times K}$?}.

Define $\mathcal{P}^\pi_t$ and  $\hat {\mathcal{P}}_t^\pi $ as the population and estimated conditional expectation operators respectively, such that 
% $$\hat{\Sigma}_{t,a} = \frac{1}{\sum_{i=1}^n \mathbbm{1}(A_{i,t}=a)}\sum_{i=1}^n [\psi_K(S_{i,t}) \psi^\tp_K(S_{i,t})\mathbbm{1}(A_{i,t}=a) ]$$
% And we take
\begin{align*}
&
\scalebox{0.89}{$
   (\mathcal{P}_t^\pi  f)(s,a) 
	= \EE\left\{ \sum_{a'} \pi_t(a'\mid S_{t+1}) f(S_{t+1}, a') \mid S_t = s, A_t = a\right\}, $} 
    %= \hat{\EE}\left\{ \sum_{a'} \pi(a'\mid S_{h+1}) f(S_{h+1}, a') \mid S_h = s, A_h = a\right\}\\
% \end{align*}
% \begin{align*}
\\
& (\hat {\mathcal{P}}_t^\pi  f)(s,a) 
    = \phi_{K}(s,a)^\tp (\hat{\Sigma}_t)^{-1} \\ & 
    \scalebox{0.9}{ \qquad   $\left( \frac{1}{n}\sum_{i=1}^n \phi_{K}(S_{i,t}, A_{i,t}) \left[ \sum_{a'} \pi_t(a'\mid S_{i,h+1}) f(S_{i,t+1}, a') \right]\right)$},
\end{align*}
% \ray{Part of these equation is displayed in the margin}
for $f \in \calQ^{(t+1)}$, $t=1,\dots, T-1$.  

% In addition, we define ${\Pi}_t$ and  $\hat {{\Pi}}_t $ as the population and estimated projection operators respectively, such that 
% \begin{align*}
%    & \Pi_t g (s,a) \\
%    & \quad =   \phi_{K}(s,a)^\tp ({\Sigma}_t)^{-1} \EE \left[ \phi_{K}(S_{t}, A_{t}) g(S_{t}, A_{t})\right], \\
%    & (\hat \Pi_t  g)(s,a) %= \hat{\EE}\left\{  f(S_{h}, A_h) \mid S_h = s, A_h = a\right\}\\
%     = \phi_{K}(s,a)^\tp (\hat{\Sigma}_t)^{-1} \\ & \qquad \qquad \left( \frac{1}{n}\sum_{i=1}^n \phi_{K}(S_{i,t}, A_{i,t}) g(S_{i,t}, A_{i,t})\right), %\\
%     %  (\tilde \Pi_t  g)(s,a) 
%     % = \phi_{K}(s,a)^\tp ({\Sigma}_t)^{-1} \left( \frac{1}{n}\sum_{i=1}^n \phi_K(S_{i,h}, A_{i,h}) f(S_{i,h}, A_{i,h})\right)
% \end{align*}
% for $g \in \calQ^{(t)}$, $t=1,\dots, T$. 
 Define $\mathcal{E}_tf = \EE f(S_t, A_t) = \int_{(s,a)} f(s,a) \dist_t^b(s,a) d(s,a)$, $\mathcal{E}^\pi_tf = \EE^\pi f(S_t, A_t) = \int_{(s,a)} f(s,a) \dist_t^\pi(s,a) d(s,a)$ for any function $f$.
One can verify that the value estimator \eqref{eqn:FQE-value} based FQE can be represented in the following form:
\begin{align}
	\label{eqn:MIS}
\hat{\nu}(\pi) 
= \sum_{t=1}^T \frac{1}{n} \sum_{i=1}^n \hat{w}_{i,t} R_{i,t} 
\end{align}
where
\begin{equation}
	\label{eqn:weights}
	\scalebox{0.9}{$
 \hat{w}_{i,t} = \mathcal{E}^\pi_1\left\{ \left( \prod_{t'=0}^{t-1} \hat{\calP}^\pi_{t'} \right) [\phi_K^\tp (\cdot)]^\tp  (\hat{\Sigma_t})^{-1} \right\} \phi_K(S_{i,t}, A_{i,t})$. }
\end{equation}
Note that \eqref{eqn:MIS} can be considered as a MIS estimator where $\hat{w}_{i,t}$ is used to estimate the ratio function $w^\pi_t(S_{i,t},A_{i,t})$  defined in \eqref{eqn:def_w}). As discussed in Section 3.3 in \citet{duan2020minimax}, under the tabular case, \eqref{eqn:MIS} matches the estimator proposed in \citet{yin2020asymptotically}.

%And we denote $\mathcal{D}_t$ as the collection of historical data up to time step $t$, i.e., $\mathcal{D}_t = \{S_1, A_1, R_1, \dots, S_{t-1}, A_{t-1}, R_{t-1}, S_t, A_t\}$. Write  $\langle \pi, Q \rangle  (\cdot)  = \sum_{a\in \calA}  \pi(a\mid \cdot) Q(\cdot, a)$.

\textbf{Additional notations.} We provide some additional notation that will be used later in the paper.  Denote by $\|\cdot \|_{\calL_2}$ and $\|\cdot\|_{\infty}$ the $\calL_2$-norm and the infinity norm respectively. More specifically,  $\|f\|_{\calL_2} = \sqrt{\EE f^2(S_t, A_t)}$ and $\|f\|_{\infty} = \sup_{(s,a) \in \calS \times \calA}|f(s,a)|$ for some function $f$ defined on $\calS \times \calA$.  %Denote by $\|\cdot\|_2$ the Euclidean norm of a vector. 
Define the projection $\Pi_t$  such that   $\Pi_t g (s,a)
=   \phi_{K}(s,a)^\tp ({\Sigma}_t)^{-1} \EE \left[ \phi_{K}(S_{t}, A_{t}) g(S_{t}, A_{t})\right] $  for $g \in \calQ^{(t)}$. We use the notation $\lesssim$ ($\gtrsim$) to denote less (greater) than up to an absolute constant.
%, which is irrelevant of any parameter.
We write $a \asymp b$ if $a \lesssim b$ and $b \gtrsim a$. 
Lastly, if a non-negative random variable $X$ satisfies $P(X\leq c\varrho(n,T) )\rightarrow 0$ as $c\rightarrow \infty$ for any $n,T$, we write $X = \bigO(\varrho(n,T))$ with high probability (W.H.P).  
% for a random variable $X$, if it satisfies $X  \leq C \varrho(n,T) $ with probability approaching $1$ \ray{as what changes?}, where $C$ is a constant  only potentially depends on the probability (i.e., independent of $n$ and $T$), we write $X = \bigO(\varrho(n,T))$ ``with high probability'' (W.H.P).
%\ray{do you have something like page 7 in \url{https://terrytao.files.wordpress.com/2011/02/matrix-book.pdf}?}
%\ray{do you mean: ``If a non-negative random variable $X$ satisfies $P(X\leq c\varrho(n,T) )\rightarrow 0$ as $c\rightarrow \infty$ for any $n,T$, we write $X = \bigO(\varrho(n,T))$ with high probability (W.H.P).''}
\section{Error Bounds}
\label{sec:upperbound}

% \ZL{we need a formal definition of Holder Class somewhere} \jiayi{Maybe in Appendix? Cool}

%\ZL{It is also better if we could group assumptions into several categories such as modeling assumption}

%\ZL{We need to explain why we do not focus on deriving the finite sample error bound but the convergence rate. CS people obsess with finite sample error bound and always criticize the convergence rate}
In this section, we analyze the finite-sample upper bound for $|\nu(\pi) - \hat{\nu}(\pi)|$.
%   Without loss of generality, we assume $\calA_1 = \cdots = \calA_T = \calA$, $\calS_1 = \cdots = \calS_T = \calS$, $K_1 = \cdots = K_T = K$, $\Psi^{(1)}_{K_1} = \cdots = \Psi^{(T)}_{K_T} = \Psi_K$  and $\phi^{(1)}_{K_1} = \cdots = \phi^{(T)}_{K_T} = \phi_K$.
% \jiayi{Define $\bigOp$ here? And also other notations. }
% \ray{we might simply make the assumption of the same state and action spaces in Section 2.1}
Note that the transitions are not required to be homogeneous, i.e., $\Pr_t$ can vary across $t$.
% Define $\mathcal{E}_tf = \EE f(S_t, A_t) = \int_{(s,a)} f(s,a) \dist_t^b(s,a) d(s,a)$, $\mathcal{E}^\pi_tf = \EE^\pi f(S_t, A_t) = \int_{(s,a)} f(s,a) \dist_t^\pi(s,a) d(s,a)$ for any function $f$.
% We decompose 
Also for the sake of clarity, we present key results in the main text and leave the most general error bounds in Section \ref{sec:thm_more} of the appendix.
%the error bounds under certain simplifying conditions.
%where the higher-order terms exhibit a polynomial order of $T$ instead of a possibly exponential order. 
%\ray{do we need to say this? it sounds alarming to readers that you are presenting the nicer order, especially the one that is not presented is exponential. I think we can just say that "For sake of clarity, we present the error bounds under certain simplifying conditions."} 
%Theoretical results under general conditions can be found .

In the following, we impose the following assumptions for $Q^\pi_t$, $t=1,\dots, T$ and the basis functions.

% \begin{assumption}
%     \label{ass:reward}
%     $\mathcal{S} \subset \mathbb{R}^d$ is compact.  %$|R_t|\leq 1$ for $t =1 ,\dots, T$. Reward $R_t$ is a function of $S_t, A_t$, i.e., $R_t = r_t(S_t, A_t)$ for some function $r_t$.
% \end{assumption} 

\begin{assumption}
    \label{ass:realizability}
   $r_t \in \mathcal{Q}^{(t)}$, for $t=1,\dots, T$. For every $q \in \mathcal{Q}^{(t+1)}$, we have $\mathcal{P}^\pi_t q \in \mathcal{Q}^{(t)}$.
%    \ZL{there seems some redundacy on the assumption}
%    \ray{$\mathcal{P}^{\pi}_t$ is not defined.}
\end{assumption}

\begin{assumption}
\label{ass:basis}
(i) $\mathcal{S} \subset \mathbb{R}^d$ is compact. The densities $\dist_t^b, t=1,\dots, T$ are uniformly bounded away from $0$ and $\infty$ on $\mathcal{S} \times \mathcal{A}$,
i.e., there exist constants $\underline{M}, \overline{M}>0$ (independent of $T$) such that $\underline{M}\le \dist_t^b(s,a)\le \overline{M}$ for all $s,a, t$.
%for every $t = 1,\dots, T$.
(ii)
% \ray{$\Sigma_t$ defined?}
The minimal eigenvalue of $\Sigma_{t,a}$ is uniformly lower bounded for all $t$ and $a$. In addition,
$\zeta_{K,t} := \sup_{s,a} \| \Sigma_t^{-1/2} \phi_K(s,a)\|_2 =\bigO(K)$ hold uniformly for all $t$.
%\jiayi{How about this: We assume that the minimal eigenvalue of $\Sigma_{t,a}$ is uniformly lower bounded for all $t$ and $a$. In addition, we assume that $\zeta_{K,t} = \bigO(K)$ hold uniformly for all $t$.}
%\ray{does this quantity depend on $T$? it is not clear from the domain of $t$ in the sup.}
%For  every $t=1,\dots, T, a\in \mathcal{A}$ and $K$, $\Sigma_{t,a}$ %\ray{defined?}
% is positive definite, i.e.,  $\lambda_{\min}(\Sigma_{t,a})>0$. \ray{just to confirm: you don't need \textit{uniform} boundedness from zero?}
%$\zeta_{K} = \bigO(\sqrt{K})$ \ray{does $\zeta_K$ depend on $T$?}.
(iii) The sieve basis (features) in $\Psi_K$ is H{\"o}lder continuous, i.e., there exist finite constants $\omega\ge 0$ and $\omega'>0$ (independent of $K$ and $T$) %\ray{(independent of $K$ and $T$)} 
such that the following inequality holds for every $t$, $a$ and $s_t, s_t'$.
\begin{equation}
    \left\| \Sigma_{t,a}^{-1/2} \left\{ \Psi_K(s_t)- \Psi_K(s_t') \right\}\right\|_2 \lesssim K^{\omega} \| s_t - s_t'\|_2^{\omega'}.
\end{equation}
% where $\|\cdot\|_2$ denotes the  norm of a vector.\ZL{move the definition of $\ell_2$ to notation part}
\end{assumption}
Assumption \ref{ass:realizability} states the realizability for function spaces $\calQ_t$, $t=1,\dots, T$ and the completeness for the Bellman operator, which are widely adopted in RL literature \citep[e.g.][]{ji2022sample, duan2020minimax, chen2019information}. \textit{When $\calQ_t$ is an infinite-dimensional space such as H{\"o}lder class, Assumption \ref{ass:realizability} is mild.} %In \jiayi{which?}, they show that when the reward and the transition density function follows \jiayi{fill in later}, the completeness automatically holds.   
Assumption \ref{ass:basis} states the conditions for the basis functions. Assumption \ref{ass:basis}(i) is standard. The requirement for bounded support can be relaxed to unbounded support with a slight modification, see, e.g., \citet{blundell2007semi, chen2018optimal, chen2012estimation}.  Assumption \ref{ass:basis}(ii) and (iii) are very mild and are satisfied by many commonly used sieve bases, such as splines and wavelets \citep{chen2018optimal} under Assumption \ref{ass:basis}(i).

% \jiayi{Introduce $\kappa$ here?}
Define 
\begin{align}
    \label{eqn:kappa}
    \kappa :=  \frac{1}{T}\sum_{t=1}^T \sup_{f \in \calQ^{(t)}} \frac{[\mathcal{E}^\pi_t f]^2}{\|f\|^2_{\calL_2}},
 \end{align}

% \ZL{Move notation} where $\|f\|_{\calL_2} = \sqrt{\EE f^2(S_t, A_t)}$ for some function $f$ defined on $\calS \times \calA$. 
The constant $\kappa$ quantifies the distribution shift between data induced by the behavior policy and the target policy. It shows up in the later error bounds in our analysis 
 %the behavior data distribution and the distribution under the target policy,
and also appears in many prior theoretical studies of OPE \citep[e.g.][]{duan2020minimax,ji2022sample}. 
Under Assumption \ref{ass:basis}(i), $\kappa$ is always upper bounded.

% \begin{assumption}
% \label{ass:holder}
 
% \end{assumption}

\subsection{Parametric Setting: A Preliminary Result}
\label{sec:para}

% \ZL{Did we define $\Pi_t$ before?} \jiayi{Yes, in the additional notations.}

In this section, we provide error bounds under the settings where 
% \jiayi{We already mention the linear mode of $\calQ^{(t)}$ here.}
%$\calQ^{(t)}$ is a linear space given by
$\calQ^{(t)} = \left\{ \phi_K(\cdot, \cdot)^\tp \beta: \beta \in \mathbb{R}^{K|\calA|} \right\}$
for  $t = 1,\dots, T$  and $K$ is a fixed constant.  In this case,  we have $Q^\pi_t = \Pi_t Q^\pi_t$ for $t = 1,\dots, T$. %, where $\Pi_t$ is defined by $\Pi_t g (s,a)
%  =   \phi_{K}(s,a)^\tp ({\Sigma}_t)^{-1} \EE \left[ \phi_{K}(S_{t}, A_{t}) g(S_{t}, A_{t})\right] $  for $g \in \calQ^{(t)}$. %\ray{did we define the projection $\Pi_t$?} 
which is the same as the in-homogeneous setting with linear function approximation discussed in \citet{duan2020minimax}.  

% \begin{thm}
%     \label{thm:para_bound}
%    Under Assumptions %\ref{ass: Markovian}-\ref{ass: DGP}, 
%    \ref{ass:realizability}-\ref{ass:basis}, %\ray{need to fix for the whole paper, as now the assumptions are also indexed by sections}, 
%     we have
%    \begin{multline}
%     |\hat{\nu}(\pi) - \nu(\pi)| 
%     = \bigO \left\{ \sqrt{\frac{ T^3 }{n} \kappa } + \right. \\
%      \left.  T \sum_{t=1}^T \left[ \left(  1 + \sqrt{\frac{{K \log n \log T} }{n}} \right)^t -1 \right]  \sqrt{\frac{{\log n \log T} }{n}}   \right\}, \text{ W.H.P.} \label{eqn: para_polynomial}
%    \end{multline}
% %    \ray{use the notation $\kappa$ to simplify this term}
% %    where $\|f\|_{\calL_2} = \sqrt{\EE f^2(S_t, A_t)}$ for some function $f$ defined on $\calS \times \calA$. 

%    If we further assume that 
%    \begin{align}
%     \label{eqn:para_T_condition}
%     T = \smallO([n/(\log n \log T)]^{1/2}),  
%    \end{align}
%    we have 
%    \begin{align}
%    |\hat{\nu}(\pi) - \nu(\pi)| 
%     = &\bigO \left( \sqrt{\frac{ T^3 }{n}\kappa}  
%  + T^{3} \frac{\log n \log T}{n} \right), \text{ W.H.P} \label{eqn: para_comp}.
%    \end{align}
% %    where \begin{align}
% %     \label{eqn:kappa}
% %     \kappa :=  \frac{1}{T}\sum_{t=1}^T \sup_{f \in \calQ^{(t)}} \frac{[\mathcal{E}^\pi_t f]^2}{\|f\|^2_{\calL_2}}.
% %  \end{align}
% \end{thm}
\begin{thm}
    \label{thm:para_bound}
   Under Assumptions %\ref{ass: Markovian}-\ref{ass: DGP}, 
   \ref{ass:realizability}-\ref{ass:basis},  %\ray{need to fix for the whole paper, as now the assumptions are also indexed by sections}, 
%     we have
%    \begin{multline}
%     |\hat{\nu}(\pi) - \nu(\pi)| 
%     = \bigO \left\{ \sqrt{\frac{ T^3 }{n} \kappa } + \right. \\
%      \left.  T \sum_{t=1}^T \left[ \left(  1 + \sqrt{\frac{{K \log n \log T} }{n}} \right)^t -1 \right]  \sqrt{\frac{{\log n \log T} }{n}}   \right\}, \text{ W.H.P.} \label{eqn: para_polynomial}
%    \end{multline}
%    \ray{use the notation $\kappa$ to simplify this term}
%    where $\|f\|_{\calL_2} = \sqrt{\EE f^2(S_t, A_t)}$ for some function $f$ defined on $\calS \times \calA$. 
   and further assume that  
   %$\calQ^{(t)} = \{\phi_K(\cdot)^\tp \beta : \beta \in \mathbb{R}^{K |\calA|}\}$ for $t=1,\dots, T$,
%    \begin{align}
%     \label{eqn:para_T_condition}
   \( T = \smallO([n/(\log n \log T)]^{1/2}) \),  
%    \end{align}
   we have W.H.P,
   \begin{equation}
    % \scalebox{0.5}{
   |\hat{\nu}(\pi) - \nu(\pi)| 
    = \bigO \left( \sqrt{\frac{ T^3\kappa }{n}}  
 + T^{3} \frac {\log n \log T}{n} \right).
    \label{eqn: para_comp}
   \end{equation}
%    where \begin{align}
%     \label{eqn:kappa}
%     \kappa :=  \frac{1}{T}\sum_{t=1}^T \sup_{f \in \calQ^{(t)}} \frac{[\mathcal{E}^\pi_t f]^2}{\|f\|^2_{\calL_2}}.
%  \end{align}
\end{thm}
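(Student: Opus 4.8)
The plan is to reduce the value error to a weighted sum of one-step regression errors propagated forward by the empirical Bellman operators, isolate a fully linearized leading term, and control the remainder using uniform-in-$t$ operator concentration together with the horizon condition.

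\emph{Backward error recursion.} The least-squares solution of \eqref{eqn:Qest} can be written as $\hat{Q}^\pi_t = \hat{r}_t + \hat{\calP}^\pi_t \hat{Q}^\pi_{t+1}$, where $\hat{r}_t$ is the empirical projection of the reward onto $\calQ^{(t)}$, while the Bellman equation \eqref{eq: Bellman equation for Q} gives $Q^\pi_t = r_t + \calP^\pi_t Q^\pi_{t+1}$. Subtracting, I would obtain for $\Delta_t := \hat{Q}^\pi_t - Q^\pi_t$ the recursion $\Delta_t = \hat{\calP}^\pi_t \Delta_{t+1} + \epsilon_t$ with $\epsilon_t := (\hat{r}_t - r_t) + (\hat{\calP}^\pi_t - \calP^\pi_t) Q^\pi_{t+1}$. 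Under Assumption \ref{ass:realizability} (so that $r_t, Q^\pi_t \in \calQ^{(t)}$ and $\calP^\pi_t Q^\pi_{t+1} \in \calQ^{(t)}$), the term $\epsilon_t$ collapses to the projected Bellman noise $\epsilon_t(s,a) = \phi_K(s,a)^\tp \hat{\Sigma}_t^{-1} \frac{1}{n}\sum_{i=1}^n \phi_K(S_{i,t},A_{i,t})\eta_{i,t}$, where $\eta_{i,t} := R_{i,t} + V^\pi_{t+1}(S_{i,t+1}) - Q^\pi_t(S_{i,t},A_{i,t})$ obeys $\EE[\eta_{i,t} \mid S_{i,t},A_{i,t}] = 0$. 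Unrolling from $\hat{Q}^\pi_{T+1}=0$ and applying $\calE^\pi_1$ yields the exact decomposition $\hat{\nu}(\pi) - \nu(\pi) = \sum_{t=1}^T \calE^\pi_1 (\prod_{t'=1}^{t-1}\hat{\calP}^\pi_{t'})\epsilon_t$, which is precisely the MIS form \eqref{eqn:MIS}-\eqref{eqn:weights} with weights built from empirical operators.

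\emph{Leading term.} I would linearize by replacing the empirical operator product with its population counterpart, using the rollout identity $\calE^\pi_1 (\prod_{t'=1}^{t-1}\calP^\pi_{t'}) f = \calE^\pi_t f = \EE[w^\pi_t f]$, and replacing $\hat{\Sigma}_t$ by $\Sigma_t$ inside $\epsilon_t$. This produces $M = \frac{1}{n}\sum_{i=1}^n \xi_i$ with $\xi_i = \sum_{t=1}^T \mu_t^\tp \Sigma_t^{-1} \phi_K(S_{i,t},A_{i,t})\eta_{i,t}$ and $\mu_t := \calE^\pi_t \phi_K$. The $\xi_i$ are i.i.d.\ and mean zero; crucially, along the natural filtration the summands over $t$ form a martingale-difference sequence (since $\eta_{i,t}$ is a Bellman residual and $\mu_t^\tp \Sigma_t^{-1}\phi_K(S_{i,t},A_{i,t})$ is measurable at time $t$), so cross terms vanish and $\Var(\xi_i) = \sum_t \EE[(\mu_t^\tp \Sigma_t^{-1}\phi_K)^2 \eta_{i,t}^2]$. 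Bounding $|\eta_{i,t}| \lesssim T$ and using $\kappa_t := \mu_t^\tp \Sigma_t^{-1} \mu_t = \sup_{f\in\calQ^{(t)}} (\calE^\pi_t f)^2 / \|f\|^2_{\calL_2}$, which by \eqref{eqn:kappa} satisfies $\frac{1}{T}\sum_t \kappa_t = \kappa$, gives $\Var(\xi_i) \lesssim T^3 \kappa$, hence $\Var(M) \lesssim T^3\kappa/n$. A Bernstein inequality (the a.s.\ bound on $\xi_i$ controlled via Assumption \ref{ass:basis}(ii)) then yields $|M| = \bigO(\sqrt{T^3\kappa/n})$ W.H.P., the claimed first-order term.

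\emph{Remainder and the main obstacle.} The remainder collects two second-order fluctuations: the operator-product difference $\calE^\pi_1 (\prod \hat{\calP}^\pi_{t'} - \prod \calP^\pi_{t'})\epsilon_t$, and the $\hat{\Sigma}_t^{-1} - \Sigma_t^{-1}$ replacement inside $\epsilon_t$. For both I would first establish uniform concentration, $\max_t \|\hat{\Sigma}_t - \Sigma_t\|_{\mathrm{op}} = \bigO(\sqrt{\log T/n})$ and an analogous bound on $\|\hat{\calP}^\pi_t - \calP^\pi_t\|$, via matrix Bernstein and a union bound over the $T$ time points (with $K$ fixed and Assumption \ref{ass:basis}(ii) supplying boundedness). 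I would then telescope the product difference into $\sum_{s<t} \calE^\pi_s (\hat{\calP}^\pi_s - \calP^\pi_s)(\prod_{s<t'<t}\hat{\calP}^\pi_{t'})\epsilon_t$, bounding each of the $\bigO(T^2)$ terms by $\sqrt{\kappa_s}\,\|\hat{\calP}^\pi_s - \calP^\pi_s\|\,\|\epsilon_t\|_{\calL_2}$ with $\|\epsilon_t\|_{\calL_2} = \bigO(T/\sqrt{n})$, so that collecting yields the higher-order term $\bigO(T^3 \log n \log T / n)$. The main obstacle I anticipate is preventing the composition of $T$ empirical operators $\prod_{t'}\hat{\calP}^\pi_{t'}$ from blowing up exponentially: each factor has norm at most $1 + \bigO(\sqrt{\log T/n})$, so the product is at most $\exp(\bigO(T\sqrt{\log T/n}))$, which stays $\bigO(1)$ exactly under the assumed horizon condition $T = \smallO([n/(\log n \log T)]^{1/2})$. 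Keeping the precise powers of $T$ (the $T^{3/2}$ in the leading term, from $|\eta| \lesssim T$ combined with $\sum_t \kappa_t = T\kappa$, and the $T^3$ in the remainder, from $\bigO(T^2)$ telescoped terms each of size $\bigO(T/n)$) is the delicate bookkeeping that the martingale structure and the operator-norm control must deliver together.
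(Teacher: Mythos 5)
Your proposal is correct and follows essentially the same route as the paper: the same unrolled error decomposition into a linearized leading term plus operator-fluctuation remainders, the same variance computation for the leading term via the martingale structure of the Bellman residuals (the paper's Lemma~\ref{lem:decompoeision} is exactly your iterated conditional-variance step, yielding $|\eta_t|\lesssim T$ times $\sum_t \kappa_t = T\kappa$, hence $T^3\kappa/n$), and the same control of the product of empirical operators via uniform concentration and the horizon condition $T=\smallO([n/(\log n\log T)]^{1/2})$ to keep $(1+\|\hat{\calP}^\pi_t-\calP^\pi_t\|_\infty)^t$ bounded. The only cosmetic differences are that you telescope the product difference rather than expanding it over binary index vectors, and you invoke Bernstein where the paper uses Chebyshev for the leading term.
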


% The term $\kappa$ \eqref{eqn:kappa} quantifies the distribution shift between data induced by the behavior policy and the target policy.
% %the behavior data distribution and the distribution under the target policy,
% This quantity also appears in many prior theoretical studies of OPE \citep[e.g.][]{duan2020minimax,ji2022sample}. 
% If $\kappa$ is bounded, then 
The first term in our bound \eqref{eqn: para_comp} exhibits an order of $T^{1.5}/\sqrt{n}$.  
If we focus on the convergent cases, the first term of \eqref{eqn: para_comp} dominates (up to logarithmic orders of $n$ and $T$), and hence is usually of interest. In \eqref{eqn: para_comp}, we refer to the first term as the first-order term and the second term as the higher-order term. In the later discussion, following this convention, we refer the term that has a slowest dependence on $n$ as the first-order term and the remaining terms as the higher-order terms.
%\ray{provide some explanation about higher-order term and why the first term is usually of focus; essentially, when we focus on \textit{convergent} settings, the first term dominates (up to logarithmic orders of $n$ and $T$) and so we call the second term the higher order term.}

%\ZL{we need to emphasize we are using the exactly same conditions as Duan to obtain a sharper first-order term.}

Extending the theoretical results from \citet{duan2020minimax} in the time-homogeneous setting  to the in-homogeneous one, 
%In Theorem 2 of \citet{duan2020minimax} \ray{best existing result?}, 
their bound will also comprise two major terms. The first-order term will have an order of $T^2/\sqrt{n}$. Compared with their bound, our first order term has an order of $T^{1.5}/\sqrt{n}$. We achieve a sharper horizon dependence by exploiting the fact that the variance of the first order term can be decomposed as a sum of $T$ individual expectations of the conditional variance. See Lemma \ref{lem:decompoeision} for more details. It is important to note that we use the exact same conditions as \citet{duan2020minimax} to achieve this rate of convergence. As for the higher-order term, our bound exhibits an order of $T^3/n$ and their bound is $T^{3.5}/n$. We have a stronger requirement for basis functions (Assumption \ref{ass:basis}(iii)) and derive the uniform convergence to achieve this. If we drop this assumption, by adopting their proof, we can show the same bound as theirs for the higher-order term. 

Next, Theorem \ref{thm:para_fast} shows that with an additional realizability assumption (Assumption \ref{ass:w}) on the probability ratio functions, the convergence rate of the error will depend \textit{linearly} with respect to the horizon $T$ in the first-order term.
This is a significant improvement in horizon dependence over the existing literature on the setting of using linear function approximation.
%\ray{if we have established the relationship between FQE and MIS in previous section. We can comment on that for tabular case (a special case of our parametric setting), \cite{yin2020asymptotically} shows the same first-order dependence for MIS estimator. Then we can comment that our results are directly comparable due to the relationship we show earlier in previous section. In tabular settings, we have well-specification of both Q and ratio (as the basis functions are all possible indicators). We should then emphasize that our is more general and not restricted to tabular settings.}
Our result also
aligns with the sharpest known dependence of the horizon in the first-order term under the tabular setting, as established in 
\citet{yin2020asymptotically} for their MIS estimator. 
% Due to the equivalence between FQE and MIS estimators (see the discussion in Section \ref{sec:FQE}). 
% As we have shown in Section \ref{sec:FQE} that the equivalence between FQE and MIS estimators, 
Note that the MIS estimator in the tabular setting can be considered as a special case of \eqref{eqn:MIS} by taking basis functions as indicator functions,
%and thus, it is a special case of our FQE estimators
due to the equivalence discussed in Section \ref{sec:FQE}.  In this case, it is also remarked that Assumption \ref{ass:w} holds automatically. Therefore, Theorem \ref{thm:para_fast} bridges the gap in the current literature
%and generalizes the result in \citet{yin2020asymptotically}
by showing linear horizon dependence for more general linear modeling (with potentially continuous state space).

\begin{assumption}
    \label{ass:w}
    %(a) $w_t^\pi$ is uniformly bounded away from $0$ and $\infty$ on $\mathcal{S} \times \mathcal{A}$  for every $t = 1,\dots, T$. \ZL{Does Assumption 3.2 already imply this?} \jiayi{We do not specify the upper boundedness of $\dist_t^\pi$?}
    % (b)
      $w_t^\pi \in \left\{ \phi_K(\cdot, \cdot)^\tp \beta: \beta \in \mathbb{R}^{K|\calA|} \right\}$, $t=1,\dots, T$.
\end{assumption}

% \jiayi{Assumption \ref{ass:w} is the realizability assumption for $w_t^\pi$. Under the tabular setting, Assumption \ref{ass:w} automatically holds.}

\begin{thm}
    \label{thm:para_fast}
       Under Assumptions %\ref{ass: Markovian}-\ref{ass: DGP}, 
       \ref{ass:realizability}-\ref{ass:basis}, and \ref{ass:w} with the condition that \( T = \smallO([n/(\log n \log T)]^{1/2}) \), %\eqref{eqn:para_T_condition}, 
       we have 
       W.H.P,
       %\begin{multline}
       % |\hat{\nu}(\pi) - \nu(\pi)| 
       % = \bigOp \left\{ T\sqrt{\frac{ 1}{n}\kappa }  \right.\\
       % \left. + T \sum_{t=1}^T \left[ \left(  1 + \sqrt{\frac{{K \log n \log T} }{n}} \right)^t -1 \right]  \sqrt{\frac{{\log n \log T} }{n}}   \right\},  \nonumber %\label{eqn: para_polynomial}
       %\end{multline}
       %If we further assume  \eqref{eqn:para_T_condition},\ZL{it's unusual to use equation in the later section} \jiayi{haha, wrong reference}
    %    \begin{align}
    %     % \label{eqn:para_T_condition}
    %     T = \smallO([n/(\log n \log T)]^{1/2}),   \nonumber
    %    \end{align}
       %we have 
       \begin{align}
        \scriptsize
       |\hat{\nu}(\pi) - \nu(\pi)| 
        = &\bigO \left( T\sqrt{\frac{ \kappa }{n}}  
     + T^{3} \frac{\log n \log T}{n} \right).  \label{eqn: para_comp2}
       \end{align}
\end{thm}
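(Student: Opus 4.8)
The plan is to analyze $\hat\nu(\pi)-\nu(\pi)$ through the marginal-importance-sampling representation \eqref{eqn:MIS}, decomposing it as a reward-form oracle term plus a weight-estimation error,
$\hat\nu(\pi)-\nu(\pi)=\sum_{t=1}^T\big[\tfrac1n\sum_i w^\pi_t(S_{i,t},A_{i,t})R_{i,t}-\mu_t\big]+\sum_{t=1}^T\tfrac1n\sum_i\big(\hat w_{i,t}-w^\pi_t(S_{i,t},A_{i,t})\big)R_{i,t}$,
where $\mu_t:=\EE^\pi[R_t]$. The essential difference from Theorem~\ref{thm:para_bound} is that there the error is expanded through the \emph{Bellman-residual} form $\sum_t\tfrac1n\sum_i(\Pi_t w^\pi_t)(S_{i,t},A_{i,t})\eta_{i,t}$ (with $\eta_{i,t}=R_{i,t}+V^\pi_{t+1}(S_{i,t+1})-Q^\pi_t(S_{i,t},A_{i,t})$), whose per-step conditional variance is inflated to order $T^2$ by the value-to-go $V^\pi_{t+1}$, yielding the $T^{1.5}$ dependence via Lemma~\ref{lem:decompoeision}; here I instead keep the \emph{reward} form, whose multiplier $R_{i,t}$ is bounded by $1$.

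First I would treat the reward-form oracle term. Since $r_t\in\calQ^{(t)}$ (Assumption~\ref{ass:realizability}) gives $\EE[w^\pi_t R_t]=\mathcal E^\pi_t r_t=\mu_t$, it is a centered i.i.d.\ average $\tfrac1n\sum_i Z_i$ with $Z_i=\sum_t(w^\pi_t(S_{i,t},A_{i,t})R_{i,t}-\mu_t)$. Bounding its variance, a first Cauchy--Schwarz over the covariances together with $|R_t|\le1$ gives $\Var(Z_i)\le(\sum_t\|w^\pi_t\|_{\calL_2})^2$, and a second Cauchy--Schwarz over the $T$ summands gives $\Var(Z_i)\le T\sum_t\EE[(w^\pi_t)^2]$. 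Here Assumption~\ref{ass:w} enters decisively: realizability of $w^\pi_t$ turns the supremum defining $\kappa$ in \eqref{eqn:kappa} into the identity $\sup_{f\in\calQ^{(t)}}(\mathcal E^\pi_t f)^2/\|f\|_{\calL_2}^2=\EE[(w^\pi_t)^2]$ (the Cauchy--Schwarz upper bound is attained at $f=w^\pi_t$), so $\sum_t\EE[(w^\pi_t)^2]=T\kappa$ and hence $\Var(Z_i)\le T^2\kappa$. A Bernstein inequality (using boundedness of $w^\pi_t$ under Assumptions~\ref{ass:realizability}--\ref{ass:basis}) then yields this term $=\bigO(T\sqrt{\kappa/n})$ plus a negligible remainder.

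Next I would show the weight-estimation error is higher order. The second role of Assumption~\ref{ass:w} is that $\hat w_{i,t}$ now targets the \emph{true} ratio $w^\pi_t$ rather than its projection $\Pi_t w^\pi_t$: I would first verify the population identity $\mathcal E^\pi_1\big(\prod_{s<t}\mathcal P^\pi_s\big)\phi_K(\cdot)^\tp\Sigma_t^{-1}\phi_K(s,a)=w^\pi_t(s,a)$ under realizability, so that no first-order projection bias survives in $\hat w_{i,t}-w^\pi_t$. Expanding $\hat w_{i,t}$ from \eqref{eqn:weights} by substituting $\hat\Sigma_t^{-1}\to\Sigma_t^{-1}$ and $\prod_{s<t}\hat{\mathcal P}^\pi_s\to\prod_{s<t}\mathcal P^\pi_s$ one factor at a time, the residual is a sum of products of two empirical noise averages, and multiplied by the bounded rewards $R_{i,t}$ it contributes only at the quadratic, higher-order scale. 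Controlling each deviation $\|\hat\Sigma_t-\Sigma_t\|$ and $\|(\hat{\mathcal P}^\pi_s-\mathcal P^\pi_s)q\|$ uniformly via matrix concentration and the H{\"o}lder continuity of the basis (Assumption~\ref{ass:basis}(ii)--(iii)), exactly as in the proof of Theorem~\ref{thm:para_bound}, and accumulating over the $\le T$ factors under $T=\smallO([n/(\log n\log T)]^{1/2})$, gives the higher-order contribution $\bigO(T^3\log n\log T/n)$, which is dominated by the first-order term.

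The main obstacle I anticipate is precisely the legitimacy of the reward-form route, i.e.\ showing that the weight-estimation error carries no first-order contribution. Without Assumption~\ref{ass:w} the term $\tfrac1n\sum_i(\Pi_t w^\pi_t-w^\pi_t)(S_{i,t},A_{i,t})R_{i,t}$ is a genuine first-order fluctuation of order $T\sqrt{\kappa/n}$ or larger, which is exactly why Theorem~\ref{thm:para_bound} must fall back on the Bellman-residual form and pay the extra $\sqrt T$; the realizability of $w^\pi_t$ is what annihilates this bias and demotes the entire weight-estimation error to the quadratic regime. Verifying this cancellation together with the population identity above, and confirming that the uniform operator and inverse-covariance deviations remain summable across the horizon, is where the real work lies.
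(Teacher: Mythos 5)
There is a genuine gap at the heart of your argument: the claim that the weight-estimation error $\sum_{t}\frac1n\sum_i\bigl(\hat w_{i,t}-w^\pi_t(S_{i,t},A_{i,t})\bigr)R_{i,t}$ carries no first-order contribution is false. The reason is that the empirical averages $\frac1n\sum_i\phi_K(S_{i,t},A_{i,t})R_{i,t}$ multiplying the operator-estimation noise are \emph{not centered}: they concentrate around $\EE[\phi_K(S_t,A_t)r_t(S_t,A_t)]\neq 0$, so a single substitution $\hat\Sigma_t^{-1}\to\Sigma_t^{-1}$ (or $\hat{\calP}^\pi_{t'}\to\calP^\pi_{t'}$) already produces (one noise)$\times$($O(1)$ deterministic), not a product of two noises. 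Concretely, take $T=1$ and write $r_1=\phi_K^\tp\beta_1$, $\epsilon_{i,1}=R_{i,1}-r_1(S_{i,1},A_{i,1})$; using $\frac1n\sum_i\phi_KR_{i,1}=\frac1n\sum_i\phi_K\epsilon_{i,1}+\hat\Sigma_1\beta_1$ and $(\hat\Sigma_1^{-1}-\Sigma_1^{-1})\hat\Sigma_1\beta_1=-\Sigma_1^{-1}(\hat\Sigma_1-\Sigma_1)\beta_1$,
\begin{align*}
\frac1n\sum_i\bigl(\hat w_{i,1}-w^\pi_1(S_{i,1},A_{i,1})\bigr)R_{i,1}
&=\mathcal{E}^\pi_1[\phi_K^\tp]\,\bigl(\hat\Sigma_1^{-1}-\Sigma_1^{-1}\bigr)\,\frac1n\sum_i\phi_K(S_{i,1},A_{i,1})R_{i,1}\\
&=-\frac1n\sum_i\Bigl(w^\pi_1r_1(S_{i,1},A_{i,1})-\EE[w^\pi_1r_1]\Bigr)+\bigOp(1/n),
\end{align*}
a nondegenerate $n^{-1/2}$ fluctuation. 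It is exactly the piece that must cancel the $\frac1n\sum_i\bigl(w^\pi_1r_1-\EE[w^\pi_1r_1]\bigr)$ component of your oracle term, leaving the influence function $\frac1n\sum_iw^\pi_1\epsilon_{i,1}$. For general $T$ the same phenomenon occurs for every factor $\hat{\calP}^\pi_{t'}$, $t'<t$, so the weight-estimation error contains on the order of $T^2$ first-order fluctuations; bounding their combined contribution by $T\sqrt{\kappa/n}$ rather than the naive $T^2/\sqrt n$ requires reassembling them, jointly with the oracle term, into the single sum $\sum_t\frac1n\sum_iw^\pi_t(S_{i,t},A_{i,t})\bigl[R_{i,t}+V^\pi_{t+1}(S_{i,t+1})-Q^\pi_t(S_{i,t},A_{i,t})\bigr]$ and exploiting its martingale structure. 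Your analysis of the oracle term $A$ (including the identity $\sup_{f\in\calQ^{(t)}}[\mathcal{E}^\pi_tf]^2/\|f\|^2_{\calL_2}=\EE[(w^\pi_t)^2]$ under Assumption \ref{ass:w}) is fine, but the route collapses at $B$.

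The reassembled object above is precisely where the paper starts: it retains the Bellman-residual decomposition $E_1+E_2+E_3$ of Section \ref{sec:proof}, notes that under Assumption \ref{ass:w} the multiplier $[\mathcal{E}^\pi_t\phi_K]^\tp\Sigma_t^{-1}\phi_K(S_{i,t},A_{i,t})$ equals $w^\pi_t(S_{i,t},A_{i,t})$ exactly (so the projection-error piece $(iii)$ in Section \ref{sec:E1} vanishes), and then controls the variance of $E_1$ via Lemma \ref{lem:decompoeision} together with the total-variance identity $\sum_t\EE^\pi\bigl[\Var\bigl(Q^\pi_t(S_t,A_t)-R_t-\langle\pi_t,Q^\pi_{t+1}\rangle(S_{t+1})\mid S_t,A_t\bigr)\bigr]=\Var^\pi\bigl(\sum_tR_t\bigr)\leq T^2$ (Lemma 3.4 of \citet{yin2020asymptotically}). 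That identity, not the boundedness of individual rewards, is what removes the extra $\sqrt T$; your proposal never invokes it and cannot reach the stated bound without it.
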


\subsection{Nonparametric Setting: Key Results}
\label{sec:non_para}
In this subsection, we generalize the parametric setting to the case where $Q$ functions are modeled nonparametrically. As opposed to the previous subsection, one fundamental difference is that linear function approximation with finite $K$ could incur non-negligible approximation errors. In the non-parametric setting, the number basis functions $K$ will grow with the sample size %and the horizon \ray{it usually decrease with the horizon, right? we can simply remove ``and the horizon''} 
so that the approximation error diminishes asymptotically. However, one needs to control the estimation error due to the increasing model complexity. In addition to the assumptions listed in Section \ref{sec:para}, we will need the following assumptions for our theoretical study.

% \jiayi{Remove this assumption since we assume Holder?}
\begin{assumption}
\label{ass:covering}
    % There exist constants $0 <\alpha < 2$ and $B>0$ (independent of $T$) such that $\calN(\mathcal{Q}^{(t)}(1), \|\cdot\|_{\infty}, \epsilon) \leq \exp(B \epsilon^{-\alpha})$ for $t=1,\dots, T$.
    % Here
    % %$\calN(\cdot, \cdot, \cdot)$ denotes the covering number
    % % and $\mathcal{Q}^{(t)}(1) = \{q \in \mathcal{Q}^{(t)}, \|q\|_{\infty} \leq 1\}$.
    % $\calN(\mathcal{Q}^{(t)}(1), \|\cdot\|_{\infty}: \epsilon)$ denotes the $\varepsilon$-covering number of $\mathcal{Q}^{(t)}(1) = \{q \in \mathcal{Q}^{(t)}, \|q\|_{\infty} \leq 1\}$ with respect to the norm $\|\cdot\|_{\infty}$. % where $\|f\|_{\infty} = \sup_{(s,a) \in \calS \times \calA}|f(s,a)|$ for $f \in \mathbb{R}^{\calS \times \calA}$.
    % % \ZL{move the definition of $\|\cdot\|_{\infty}$ to notation part}
For every $a \in \calA$ and $t=1,\dots, T$, $\{q(\cdot, a): q \in \calQ^{(t)}, \|q\|_{\infty}\leq 1\}$ is a subset of $\Lambda_{\infty}(p,L)$ with constants $p>d/2$ and $L>0$.
%\jiayi{Modify assumption}
\end{assumption}

\begin{assumption}
    \label{ass:bias}
    There exists a constant $\beta_Q>1/2$ (independent of $T$)
    such that $\sup_{q \in \mathcal{Q}^{(t)}(1)}\|q - \Pi_t q\|_{\infty} \lesssim K^{-\beta_Q}$ for $t=1,\dots, T$.
    \end{assumption}

%     Assumption \ref{ass:covering} 
% specifies the complexity of the hypothesis spaces. These covering number assumptions are satisfied by many commonly used functional classes. For example, if $ \{q(\cdot, a): q \in \calQ^{(t)}(1)\}$ is a subset of reproducing kernel Hilbert space (RKHS) or Sobolev space \citep{hearst1998support,geer2000empirical,gyorfi2006distribution} for every $a \in \calA$,   Assumption \ref{ass:covering}  holds.
% Take the Sobolev spaces as an example, \(\alpha = \alpha_t = d/(2q)\), where $d$ is the dimension of the input of functions and \(q\) is the number of continuous derivatives possessed by the functions in the corresponding space.  
Again, we emphasize that Assumption \ref{ass:covering} together with Assumption \ref{ass:realizability} are very mild as the H{\"o}lder space is very broad. 
{Indeed, we can show that Assumption \ref{ass:realizability} is not hard to satisfy under Assumption \ref{ass:covering}. By using similiar proof arguments of Lemma 1 in \citet{shi2022off}, one can show that if the transition kernel $p_t(s'|\cdot, a) \in \Lambda(p,L)$, $t=1,\dots, T$, for any $a \in \mathcal{A}$ and $s'\in \mathcal{S}$, then we have $\sup_{q \in  \mathcal{Q}^{(t+1)} : \|q\|_\infty\le 1 }||( \mathcal{P}^\pi_t) q||_{\Lambda(p)} \leq L$ for any policy $\pi$, where $\|\cdot\|_{\Lambda(p)}$ is the H{\"o}lder norm defined in the space $\Lambda(p,L)$ for $t=1,\dots, T$.  %For the Bellman operator $\mathcal{T}_t$, one can show that $\sup_{q \in B(1)}||(\Pi_{t=1}^T\mathcal{T}_t) q||_{\Lambda(p)} \leq LT$.
Therefore, the completeness assumption (Assumption \ref{ass:realizability}) is satisfied. }
Assumption \ref{ass:bias} specifies the uniform, projection error of $\Pi_t$ in sup-norm. %When $ \{q(\cdot, a): q \in \calQ^{(t)}(1)\}$ is a subset of H{\"o}lder space $C^q$ for every $a \in \calA$, 
which is satisfied when taking the basis functions such as B-spline basis or wavelet. 
%More specifically, let $B_{\infty}(q,L)$ denote a H{\"o}lder ball of smoothness $q>0$ and radius $L$ defined on a bounded open set $\Omega$ in $\mathbb{R}^d$:
% \begin{align*}
%  &  B_{\infty}(q,L) = \Biggl\{ f: \sum_{|\alpha|\leq q} \sup_{\Omega} |{\partial}^\alpha f | \leq L,\\
%  &   |\alpha| = \alpha_1 + \cdots + \alpha_d, \alpha_1,\dots, \alpha_d \text{ are nonnegative integers} \Biggr\}.
%   & {\partial}^\alpha f (x) = \frac{{\partial}^{|\alpha|} f }{\partial^{\alpha_1}\cdots \partial^{\alpha_d}} f(x).
% \end{align*} 
In these cases, one can take $\beta_Q = p/d$ so that Assumption \ref{ass:bias} holds.
% \jiayi{Roughly speaking, a larger $\beta_Q$ indicates smoother $Q$-functions.}
See Section 3.1 of \citet{chen2018optimal}. 
% \ZL{we need to talk about smoothness constant $\beta_Q$ to connect with later results} 
% \ray{all the above discussion of Sobolev, H{\"o}lder, splines and wavelet needs to restrict to part of the tensor product space, since $a$ is discrete.}\jiayi{Added.}

\subsubsection{A Slower Convergence Rate: No Realizability on Ratio Functions}

\label{sec:slow}

\begin{thm}
    \label{thm:slow}
   Under Assumptions %\ref{ass: Markovian}-\ref{ass: DGP}, 
   \ref{ass:realizability}-\ref{ass:basis}, \ref{ass:covering}-\ref{ass:bias}, if we further assume that $K = \smallO( \min\{ \sqrt{n/(\log n \log T)}, n/(T^2\log n \log T)\} )$, \(  T = \smallO(K^{\beta_Q})\),  then we have W.H.P, $|\hat{\nu}(\pi) - \nu(\pi)| 
    = $
%    \begin{multline}
%     |\hat{\nu}(\pi) - \nu(\pi)| 
%     = \bigOp \left\{ T^2 K^{-\beta_Q} + \sqrt{\frac{T^{3}}{n} \kappa}  \right. \\
%     \left.  + T \sum_{t=1}^T \left[ \left(  1 + K^{-\beta_Q} + \sqrt{\frac{{K \log n \log T} }{n}} \right)^t -1 \right] \right. \\
%     \left. \left( K^{-\beta_Q} + \sqrt{\frac{{K \log n \log T} }{n}} \right)   \right\}. \label{eqn: slow1}
%    \end{multline}
%    If we further assume that 
%    \begin{align}
%     \label{eqn:T_condition}
%     % T = \smallO(K^{\beta_Q} + [n/(\log n \log T)]^{1/2} K^{-1/2}), 
%     K = \smallO\{n/(T\log n \log T)\}, \  T = \smallO(K^{\beta_Q})
%    \end{align}
%    \ZL{it is a bit confusing when we have two conditions on $K$ and $T$}
%    we have 
   \begin{align}
   \label{eqn:slow}
   	% \scalebox{1}{$
   \bigO ( 
    T^2 K^{-\beta_Q} + \sqrt{\frac{T^{3}\kappa}{n}} +  \frac{T^{3}K \log n \log T}{n}). 
    % $} 
   \end{align}
%    where $\|f\|_{\calL_2} = \sqrt{\EE f^2(S_t, A_t)}$ for some function $f$ defined on $\calS \times \calA$. 
   %To ensure the existence of $K$, we require $$T \log T = \smallO\left(\left(\frac{n}{\log n}\right)^{\frac{\beta_Q}{1+2\beta_Q}}\right).$$
\end{thm}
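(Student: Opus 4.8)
The plan is to build on the MIS representation \eqref{eqn:MIS}--\eqref{eqn:weights} and follow the template of the parametric argument (Theorem~\ref{thm:para_bound}), but now additionally tracking the sieve approximation error and the growing dimension $K$. First I would establish the fundamental error decomposition by unrolling the sieve FQE recursion. Writing the conditional mean of the regression target as $Q^\pi_t$ and the mean-zero temporal-difference noise as $\varepsilon_{i,t} := R_{i,t} + V^\pi_{t+1}(S_{i,t+1}) - Q^\pi_t(S_{i,t},A_{i,t})$, each backward update obeys
\[
\hat{Q}^\pi_t - Q^\pi_t = (\hat{\Pi}_t Q^\pi_t - Q^\pi_t) + \hat{\xi}_t + \hat{\calP}^\pi_t(\hat{Q}^\pi_{t+1} - Q^\pi_{t+1}),
\]
where $\hat{\Pi}_t$ is the empirical least-squares projection onto $\tilde{\calQ}^{(t)}$ and $\hat{\xi}_t := \phi_K(\cdot)^\tp \hat{\Sigma}_t^{-1} \tfrac{1}{n}\sum_i \phi_K(S_{i,t},A_{i,t})\varepsilon_{i,t}$. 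Iterating with $\hat{Q}^\pi_{T+1} = Q^\pi_{T+1} = 0$ and applying $\mathcal{E}^\pi_1$ yields
\[
\hat{\nu}(\pi) - \nu(\pi) = \underbrace{\sum_{t=1}^T \frac{1}{n}\sum_{i=1}^n \hat{w}_{i,t}\varepsilon_{i,t}}_{\text{(I) noise}} \; + \; \underbrace{\sum_{t=1}^T \mathcal{E}^\pi_1\Bigl(\prod_{t'=1}^{t-1}\hat{\calP}^\pi_{t'}\Bigr)(\hat{\Pi}_t Q^\pi_t - Q^\pi_t)}_{\text{(II) bias}},
\]
where I use that $\mathcal{E}^\pi_1(\prod_{t'<t}\hat{\calP}^\pi_{t'})\hat{\xi}_t = \tfrac{1}{n}\sum_i \hat{w}_{i,t}\varepsilon_{i,t}$ by definition \eqref{eqn:weights} of the weights.

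For term (I), I would first replace $\hat{w}_{i,t}$ by the population ratio $w^\pi_t$ (or its sieve projection) and control the leading piece $\sum_t \tfrac{1}{n}\sum_i w^\pi_t \varepsilon_{i,t}$. This is a sum of conditionally mean-zero terms whose cross-covariances across $t$ vanish by the within-trajectory martingale structure, so, invoking Lemma~\ref{lem:decompoeision}, its variance equals $\tfrac{1}{n}\sum_t \EE[(w^\pi_t)^2\,\Var(\varepsilon_t \mid S_t,A_t)]$. Since $|\varepsilon_t|\lesssim T-t$ and the $(w^\pi_t)^2$-weighting is controlled by $\kappa$ (see \eqref{eqn:kappa}), this variance is $\bigO(T^3\kappa/n)$; a Freedman/Bernstein bound then gives the first-order term $\sqrt{T^3\kappa/n}$ W.H.P. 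The replacement error $\sum_t \tfrac{1}{n}\sum_i (\hat{w}_{i,t}-w^\pi_t)\varepsilon_{i,t}$ is pushed into the higher-order term. For term (II), the per-step factor $\hat{\Pi}_t Q^\pi_t - Q^\pi_t$ splits into the population projection error $(I-\Pi_t)Q^\pi_t$, bounded in sup-norm by $\|Q^\pi_t\|_\infty K^{-\beta_Q}\lesssim (T-t)K^{-\beta_Q}$ via Assumption~\ref{ass:bias} (using $\|Q^\pi_t\|_\infty\le T-t$), plus a higher-order empirical-versus-population projection discrepancy. Since the propagation functional $\mathcal{E}^\pi_1\prod_{t'<t}\hat{\calP}^\pi_{t'}$ behaves like an expectation under $\dist^\pi_t$ with $\bigO(1)$ total mass, each propagated bias contributes $\bigO((T-t)K^{-\beta_Q})$, and summing over $t$ produces the bias term $\bigO(T^2 K^{-\beta_Q})$.

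The main obstacle is to show that the estimated operator products $\prod_{t'<t}\hat{\calP}^\pi_{t'}$, equivalently the weights $\hat{w}_{i,t}$, remain bounded and close to their population counterparts uniformly in $t$, so that the horizon dependence is polynomial rather than exponential in $T$. I expect this to require three ingredients: (a) uniform matrix-Bernstein control of $\hat{\Sigma}_t - \Sigma_t$ over $t$ via Assumption~\ref{ass:basis}(ii) and a union bound, which supplies the $\log T$ factor and the $K$-dependence; (b) a telescoping argument showing that per-step operator errors accumulate \emph{additively} rather than multiplicatively, where the condition $T = \smallO(K^{\beta_Q})$ guarantees that the accumulated per-step sieve bias $T K^{-\beta_Q} = \smallO(1)$ so the products do not blow up; and (c) sup-norm (not merely $\|\cdot\|_{\calL_2}$) control of $\hat{Q}^\pi_t - Q^\pi_t$, obtained through a chaining/discretization argument enabled by the H\"older continuity of the basis (Assumption~\ref{ass:basis}(iii)), which is needed because the errors propagate in sup-norm. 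Combining (a)--(c), the accumulated replacement and empirical-projection errors collapse into the higher-order term $\bigO(T^3 K \log n \log T / n)$ under the stated scaling of $K$, and gathering (I) and (II) gives \eqref{eqn:slow}.
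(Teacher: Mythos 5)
Your proposal follows essentially the same route as the paper: your terms (I) and (II) are exactly the paper's $\mathcal{E}_1\{E_1\}+\mathcal{E}_1\{E_2\}$ and $\mathcal{E}_1\{E_3\}$ in \eqref{eqn:def_E1}--\eqref{eqn:def_E3}, your replacement of $\hat{w}_{i,t}$ by the (projected) population weights reproduces the $E_1$/$E_2$ split, and the variance-via-Lemma~\ref{lem:decompoeision} computation giving $\sqrt{T^3\kappa/n}$, the sup-norm bias bound giving $T^2K^{-\beta_Q}$, and the three higher-order ingredients (uniform matrix concentration, linearization of the operator products under $TK^{-\beta_Q}=\smallO(1)$, and chaining/discretization via Assumption~\ref{ass:basis}(iii)) all match the paper's argument. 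The only caveat is that to land on $\kappa$ rather than $\sup_{s,a}w^\pi_t$ in the first-order term you must use the sieve projection $\Pi_t w^\pi_t$ (equivalently $[\mathcal{E}^\pi_t\phi_K]^\tp\Sigma_t^{-1}\phi_K$) rather than $w^\pi_t$ itself, as you parenthetically note.
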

% In Theorem \ref{thm:slow}, \eqref{eqn: slow1} provides our most general upper bound for convergence rate of $|\hat{\nu}(\pi) - \nu(\pi)|$. When $T$ is relatively small, the third component in \eqref{eqn: slow1} can be further bounded by a term that has a polynomial order dependence of $T$, as shown in \eqref{eqn:slow}. 
To ensure the existence of $K$ that satisfies the conditions listed in Theorem \ref{thm:slow}, % \eqref{eqn:T_condition}, 
we require 
\begin{align}
    \label{eqn:T_requirement}
    T \log T = \smallO\left(\left({n}/{\log n}\right)^{\frac{\beta_Q}{1+2\beta_Q}}\right).
\end{align}

The bound \eqref{eqn:slow} consists of three terms.  {The first term results from the bias of the approximation.  %When $\calQ^{(t)}$ belongs to the the H{\"o}lder space $C^q$, 
The larger $p$ in Assumption \ref{ass:covering} is (i.e., the smoother the $Q$-functions are), the smaller this bias term would be. }% the larger $\beta_Q$ is, and therefore the smaller this bias term would be.}  
 In the following corollaries, we discuss the dominant term under different scenarios. 
First, we consider the case where $T$ is bounded.

\begin{corollary}
    \label{cor:slow}
Suppose Assumptions %\ref{ass: Markovian}-\ref{ass: DGP},
 \ref{ass:realizability}-\ref{ass:basis}, \ref{ass:covering}-\ref{ass:bias} hold. We further assume that $T$ is bounded.
\begin{enumerate}
    \item[(i)] If $1/2 < \beta_Q \leq 1$, then by taking $K \asymp \sqrt{n}/(\log n \log T)$, we have 
    \begin{align}
        |\hat{\nu}(\pi) - \nu(\pi)| 
         = &\bigO \left( n^{-\beta_Q/2} \log n  \right), \text{ W.H.P}.
            \end{align}
    \item[(ii)] If $\beta_Q > 1$, then by taking $K \asymp (n/(\log n))^{1/(1 + \beta_Q)}$, we have
    \begin{align}
        |\hat{\nu}(\pi) - \nu(\pi)| 
        =  \bigO \left( n^{-1/2}  \right), \text{ W.H.P}.
       \end{align} 
    %    If we further assumes that $\beta_w = \beta_Q = \beta$, then the bound reduces to 
    %    \begin{align}
    %     |\hat{\nu}(\pi) - \nu(\pi)| 
    %     = \bigOp \left( \frac{T}{\sqrt{n}}  +  T^3  K^{-2\beta} + T^3 K^{-\beta} \sqrt{\frac{{K \log n} }{n}}    + T^{3} \frac{K \log n \log T}{n}  \right). 
    %    \end{align}
\end{enumerate}
\end{corollary}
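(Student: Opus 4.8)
The plan is to specialize the finite-sample bound \eqref{eqn:slow} of Theorem~\ref{thm:slow} to the regime where $T$ is a fixed constant, and then to optimize the number of basis functions $K$ by trading off the three resulting terms. Since $T$ and $\log T$ are now $O(1)$, and since $\kappa = O(1)$ under Assumption~\ref{ass:basis}(i), the bound collapses to a bias term of order $K^{-\beta_Q}$, a statistical term of order $n^{-1/2}$, and a model-complexity term of order $K\log n/n$. Correspondingly, the hypotheses of Theorem~\ref{thm:slow} reduce to a single binding requirement: since $\sqrt{n/(\log n\log T)} \ll n/(T^2\log n\log T)$, the admissibility condition becomes the cap $K = \smallO(\sqrt{n/\log n})$, while $T = \smallO(K^{\beta_Q})$ and the feasibility condition \eqref{eqn:T_requirement} hold automatically because $K\to\infty$ and the right-hand sides diverge. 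Thus the entire corollary is a bias--complexity optimization of $K^{-\beta_Q} + n^{-1/2} + K\log n/n$ subject to $K = \smallO(\sqrt{n/\log n})$.

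The first step for each case is to check that the prescribed $K$ is admissible, and the second is to identify the dominant term. An unconstrained balance of bias against complexity, $K^{-\beta_Q}\asymp K\log n/n$, would suggest $K\asymp (n/\log n)^{1/(1+\beta_Q)}$. In case~(ii), where $\beta_Q>1$ gives $1/(1+\beta_Q)<1/2$, this interior optimum satisfies the cap $K=\smallO(\sqrt{n/\log n})$, so I would adopt it directly; both the bias and complexity terms then have order $n^{-\beta_Q/(1+\beta_Q)}$ up to logarithmic factors, and since $\beta_Q/(1+\beta_Q)>1/2$ this is $\smallO(n^{-1/2})$, leaving the statistical term $\sqrt{\kappa/n} = O(n^{-1/2})$ as the dominant contribution. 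In case~(i), where $1/2<\beta_Q\le 1$ forces $1/(1+\beta_Q)\ge 1/2$, the interior optimum would violate the cap, so instead I would saturate it, taking $K\asymp \sqrt{n}/(\log n\log T)$, the largest admissible order. The bias $K^{-\beta_Q}\asymp n^{-\beta_Q/2}(\log n)^{\beta_Q}\lesssim n^{-\beta_Q/2}\log n$ then dominates both the complexity term $K\log n/n \asymp n^{-1/2}$ and the statistical term $n^{-1/2}$, since $\beta_Q\le 1$ gives $n^{-\beta_Q/2}\ge n^{-1/2}$.

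The main subtlety --- and the only genuine obstacle beyond bookkeeping of logarithmic factors --- is recognizing that the admissibility cap $K=\smallO(\sqrt{n/\log n})$ is binding precisely in case~(i). This is what makes the two regimes behave qualitatively differently: for $\beta_Q>1$ the model is smooth enough that the optimal $K$ keeps the bias below the parametric floor $n^{-1/2}$, yielding the optimal rate; for $\beta_Q\le 1$ the analysis cannot afford a large enough $K$ to render the bias negligible, so one saturates the cap and the residual bias $n^{-\beta_Q/2}$ (up to a $\log n$ factor) sets a rate strictly slower than $n^{-1/2}$. The remaining care is purely in tracking the powers of $\log n$: verifying, for instance, that $\sqrt n/(\log n\log T) = \smallO(\sqrt{n/\log n})$ (which holds because $1/\sqrt{\log n}\to 0$) and that $(\log n)^{\beta_Q}\le \log n$ for $\beta_Q\le 1$, so the stated rates absorb these factors correctly.
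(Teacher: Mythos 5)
Your proposal is correct and follows essentially the same route the paper takes: Corollary \ref{cor:slow} is obtained by specializing the bound \eqref{eqn:slow} of Theorem \ref{thm:slow} to bounded $T$ (using that $\kappa$ is bounded under Assumption \ref{ass:basis}(i)) and then choosing $K$ to trade off the bias term $K^{-\beta_Q}$ against the complexity term $K\log n/n$ subject to the cap $K=\smallO(\sqrt{n/(\log n\log T)})$, which is binding exactly when $\beta_Q\le 1$. Your bookkeeping of the logarithmic factors and the verification that the prescribed $K$ satisfies the admissibility conditions in both cases are accurate.
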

As shown in case (ii) of Corollary \ref{cor:slow}, when $\beta_Q$ is large enough, i.e., $Q$ functions are smooth enough, we can achieve the optimal convergence rate $n^{-1/2}$. This addresses the fundamental question \quest{1}. When $1/2 < \beta_Q\leq 1$, by choosing $K$ appropriately,  our bound is faster than the optimal convergence rate $n^{-\beta_Q/(1 + 2\beta_Q)}$ for nonparametrically estimating the $Q$-functions \citep{chen2022well}. In other words, even without stricter smoothness assumption, our results indicate that the FQE estimator $\hat{\nu}(\pi)$ can still achieve a non-trivial fast rate, despite that $\hat{\nu}(\pi)$ is a simple plug-in estimator based on estimation of the nonparametric functions $\{Q_t^\pi\}$. %\ZL{we need to emphasize that this result is better than those in the table here; what is the insight? why we can achieve this?}
In comparison to the bounds derived in \citet{nguyen2021sample} and \citet{ji2022sample}, we have faster convergence rates with respect to $n$ in both cases of Corollary \ref{cor:slow}. Unlike their analysis, we decompose the error term differently (see the decomposition in Section \ref{sec:proof}), where we separate the first order term \eqref{eqn:def_E1} and the bias-induced term \eqref{eqn:def_E3}. By choosing $K$ appropriately, we can render the bias asymptotically ignorable to achieve optimal dependence on sample size $n$.  

Next, we discuss the scenario when the horizon $T$ can grow with $n$. For a neat presentation, we omit the log factors appearing in \eqref{eqn:korder_slow} and \eqref{eqn:slow2} in Corollary \ref{cor:slow2}.
\begin{corollary}
    \label{cor:slow2}
   Under Assumptions %\ref{ass: Markovian}, \ref{ass: reward}, \ref{ass: DGP}, 
   \ref{ass:realizability}, \ref{ass:basis}, \ref{ass:covering} and \ref{ass:bias}, if $T\log T = \smallO\left(\left({n}/{\log n}\right)^{{\beta}/{(1+2\beta)}}\right)$, then we can take 
\begin{align}
    % K \asymp \min \left\{ \left( \frac{n}{T \log n \log T} \right)^{\frac{1}{1+\beta_Q}}, \frac{\sqrt{n}}{\log n \log T}, \right.\\
    % \left. \frac{n}{T^2 \log n \log T} \right\}, 
    K \asymp \min \left\{ \left( {n}/{T } \right)^{\frac{1}{1+\beta_Q}}, {\sqrt{n}},  {n}/{T^2 } \right\},
    \label[type]{eqn:korder_slow} 
\end{align}
and we have W.H.P,  \quad  $ |\hat{\nu}(\pi) - \nu(\pi)| =$
   \begin{align}
   	\scalebox{0.94}{$
    \bigO \left( 
%    \left( \frac{n}{\log n \log T} \right)^{-\frac{\beta_Q}{1 + \beta_Q}} T^{\frac{2 + 3\beta_Q}{1+\beta_Q}}\right. \\
%    \left. + T^{2+ \beta_Q} \left( \frac{n}{\log n \log T} \right)^{-\beta_Q} + T^2 n^{\frac{-\beta_Q}{2}}(\log n \log T)^{\beta_Q} \right. \\
%    \left. + \sqrt{\frac{T^{3}}{n}\kappa} \right). \label{eqn:slow2}
%    |\hat{\nu}(\pi) - \nu(\pi)| 
%    = \bigOp \left( 
 {n}^{-\frac{\beta_Q}{1 + \beta_Q}} T^{\frac{2 + 3\beta_Q}{1+\beta_Q}}  + T^{2+ \beta_Q} {n} ^{-\beta_Q} + T^2 n^{\frac{-\beta_Q}{2}} + \sqrt{\frac{T^{3}\kappa}{n}} \right).$} \label{eqn:slow2}
%   \end{multline}
   \end{align}
%    where $\|f\|_{\calL_2} = \sqrt{\EE f^2(S_t, A_t)}$ for some function $f$ defined on $\calS \times \calA$. 
   %To ensure the existence of $K$, we require $$T \log T = \smallO\left(\left(\frac{n}{\log n}\right)^{\frac{\beta_Q}{1+2\beta_Q}}\right).$$
\end{corollary}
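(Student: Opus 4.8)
The plan is to derive Corollary \ref{cor:slow2} directly from Theorem \ref{thm:slow} by substituting the prescribed number of basis functions \eqref{eqn:korder_slow} into the three-term bound \eqref{eqn:slow} and simplifying. Recall that Theorem \ref{thm:slow} controls $|\hat{\nu}(\pi)-\nu(\pi)|$ by the sum of a bias term $T^2 K^{-\beta_Q}$, a distribution-shift term $\sqrt{T^3\kappa/n}$, and an estimation (variance) term of order $T^3 K (\log n\log T)/n$, and is valid only when $K$ lies in the two-sided window $K = \smallO(\min\{\sqrt{n/(\log n\log T)},\, n/(T^2\log n\log T)\})$ together with $T = \smallO(K^{\beta_Q})$. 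The assumptions of the corollary are exactly those feeding Theorem \ref{thm:slow}, so the entire argument reduces to two tasks: (i) checking that the choice $K \asymp \min\{(n/T)^{1/(1+\beta_Q)},\sqrt n, n/T^2\}$ admits a valid value inside this window, and (ii) reading off the dominant order of each of the three terms under this choice.

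For task (i), I would first note that the bias term decreases in $K$ while the variance term increases in $K$; balancing $T^2 K^{-\beta_Q} \asymp T^3 K/n$ gives the interior optimum $K_1 := (n/T)^{1/(1+\beta_Q)}$, while $K_2 := \sqrt n$ and $K_3 := n/T^2$ are precisely the caps forced by the upper window constraint (up to the suppressed logarithmic factors). Taking $K$ to be the minimum of these three guarantees the upper constraint automatically. The nontrivial requirement is the lower bound $T = \smallO(K^{\beta_Q})$, i.e.\ that the window is nonempty: $T^{1/\beta_Q}$ must sit below $\min\{\sqrt n,\, n/T^2\}$ (up to logs). The binding inequality $T^{1/\beta_Q} \lesssim n/(T^2\log n\log T)$ rearranges to $T^{(1+2\beta_Q)/\beta_Q} \lesssim n/(\log n\log T)$, which is exactly the hypothesis $T\log T = \smallO((n/\log n)^{\beta_Q/(1+2\beta_Q)})$ of \eqref{eqn:T_requirement}. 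Hence under this hypothesis a valid $K$ of the stated order exists.

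For task (ii), I would split according to which of $K_1,K_2,K_3$ attains the minimum, equivalently into regimes of how fast $T$ grows. Since $K = \min$ means $K^{-\beta_Q} = \max_j K_j^{-\beta_Q}$, the bias term equals the largest of $T^2 K_j^{-\beta_Q}$; substituting $K_1$ yields the leading nonparametric rate $T^{(2+3\beta_Q)/(1+\beta_Q)} n^{-\beta_Q/(1+\beta_Q)}$, while the two caps $K_2,K_3$ produce the remaining polynomial-in-$T$ contributions $T^2 n^{-\beta_Q/2}$ and $T^{2+\beta_Q} n^{-\beta_Q}$. A short check shows the variance term $T^3 K/n$ is, in each regime, dominated by the corresponding bias term (at $K_1$ they coincide by construction, and when a cap binds $K$ is smaller, shrinking the variance), so it contributes no new order. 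The distribution-shift term $\sqrt{T^3\kappa/n}$ does not depend on $K$ and passes through as the final term, assembling \eqref{eqn:slow2}.

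The only genuinely delicate point is task (i): verifying that the admissible window for $K$ in Theorem \ref{thm:slow} is nonempty and that the proposed minimum lands inside it. This is where the growth restriction on $T$ is consumed, and it is what pins down the precise exponent $\beta_Q/(1+2\beta_Q)$. The subsequent simplification in task (ii) is routine bookkeeping --- tracking which argument of the minimum is active as $T$ varies and confirming the variance term never dominates --- and requires no new ideas beyond the careful case split.
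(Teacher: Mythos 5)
Your proposal is correct and follows essentially the same route as the paper, which offers no separate proof of Corollary \ref{cor:slow2} beyond substituting the prescribed $K$ into the three-term bound \eqref{eqn:slow} of Theorem \ref{thm:slow}, verifying that the growth condition on $T$ makes the admissible window for $K$ nonempty (your task (i)), and reading off the dominant orders with the variance term $T^3K/n$ always absorbed by the bias term $T^2K^{-\beta_Q}$. One small arithmetic remark: the cap $K \asymp n/T^2$ actually yields $T^2K^{-\beta_Q}=T^{2+2\beta_Q}n^{-\beta_Q}$ rather than the $T^{2+\beta_Q}n^{-\beta_Q}$ you (and the corollary) record, but since that cap never attains the minimum under $T\log T=\smallO\left((n/\log n)^{\beta_Q/(1+2\beta_Q)}\right)$, the corresponding term is always dominated by the first term and the stated bound is unaffected.
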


Which of the four terms in \eqref{eqn:slow2} dominates depends on the choice of 
%One of the four components in \eqref{eqn:slow2} will dominate given different choices of
$\beta_Q$, $n$ and $T$. When $\beta_Q > 1$, $\sqrt{T^3/n}$ is the first-order term and we obtain a polynomial dependence of order $1.5$ for $T$.  The remaining higher-order terms have a stronger dependence on $T$ but still converge faster due to a better dependence on $n$. When $\beta_Q\leq 1$, the slowest dependence concerning $n$ becomes $n^{-\beta_Q/2}$ and the corresponding term is quadratic in $T$ (up to logarithmic orders).
These results respond to \quest{2} and provide some key understanding of the horizon dependence.
In the next subsection, we show that FQE estimator $\hat{\nu}(\pi)$ can achieve better horizon dependence
with an additional realizability condition of the ratio function, adding another layer of understanding to \quest{2} by addressing \quest{3}.

\subsubsection{A Faster Convergence Rate: Realizability on Ratio Function}
\label{sec:fast}
%In this section, we discuss the scenario where we can obtain a faster rate than the one in Theorem \ref{thm:slow} for FQE when nonparametricly modeling the $Q$ functions. To achieve this, we need the following assumptions for the probability ratio function $w_t^\pi$, $t = 1,\dots, T$. 
In this section, we show a better convergence guarantee for FQE estimator by adopting the following realizability condition on the probability ratio function $w_t^\pi$, $t = 1,\dots, T$. 
\begin{assumption}
    \label{ass:basis_w}
%    (a) $w_t^\pi$ is uniformly bounded away from $0$ and $\infty$ on $\mathcal{S} \times \mathcal{A}$  for every $t = 1,\dots, T$. 
%    (b)  
There exists a constant $\beta_w>1/2$ such that $\sup_{t}\|w_t^\pi - \Pi_t w_t^\pi\|_{\infty} \lesssim K^{-\beta_w}$ for $t=1,\dots, T$.  
    \end{assumption}
 
% \begin{assumption}
%     \label{ass:bias_w}
   
%     \end{assumption}

% Assumption \ref{ass:basis_w}(a) is satisfied if $\dist^b_t$ is bounded away from 0 and $\dist^\pi_t$ is bounded above.  This overlap condition is commonly adopted in the RL literature \citep[e.g.][]{uehara2020minimax,yin2020asymptotically, wang2023projected}.  
Assumption \ref{ass:basis_w} imposes the smoothness conditions for the probability ratio functions and assumes that they can be approximated well by the basis functions that are used to model $Q$ functions,
as the number of basis functions $K$ increases. Similar to the discussion in Section \ref{sec:slow}, this assumption can be fulfilled when $w^\pi_t(\cdot, a)$ belongs to the  H{\"o}lder spaces for every $a \in \calA$ and the basis functions are taken as B-spline basis or wavelet. \textit{This implies that Assumption \ref{ass:basis_w} is also very mild.}

\begin{thm}
    \label{thm:fast}
    Under Assumptions %\ref{ass: Markovian}, \ref{ass: reward}, \ref{ass: DGP}, 
    \ref{ass:realizability}, \ref{ass:basis}, \ref{ass:covering}, \ref{ass:bias} and \ref{ass:basis_w},    If we further assume $K = \smallO( \min\{ \sqrt{n/(\log n \log T)}, n/(T^2\log n \log T)\} )$,  %$K = \smallO(\sqrt{n/(\log n \log T)})$,  \(K = \smallO\{n/(T^2\log n \log T)\},
     \(T = \smallO(K^{\beta_Q})\),  we have W.H.P,
%        \begin{align}
%    & |\hat{\nu}(\pi) - \nu(\pi)| 
%     = \bigOp \left\{ \frac{T}{\sqrt{n}} \right. \label{eqn:comp1} \\
%    & \left.  +  K^{-\beta_w}\left[ T^2 K^{-\beta_Q} + \sqrt{\frac{T^{3}}{n}}  \right] \right.  \label{eqn:comp2}\\
%    & \left.  + T \sum_{t=1}^T \left[ \left(  1 + K^{-\beta_Q} + \sqrt{\frac{{K \log n \log T} }{n}} \right)^t -1 \right]  \right.  \nonumber \\
%  &\qquad \qquad   \left. \left( K^{-\beta_Q} + \sqrt{\frac{{K \log n \log T} }{n}} \right)   \right\}. \label{eqn:comp3}
%    \end{align}
%    If we further assume \eqref{eqn:T_condition},
% %    \begin{align}
% %     % \label{eqn:T_condition}
% %     T = \smallO(K^{\beta_Q} + [n/(\log n \log T)]^{1/2} K^{-1/2}),  \nonumber
% %    \end{align}
%    we have 
        \begin{multline}
            |\hat{\nu}(\pi) - \nu(\pi)| 
            = \bigO \left( \frac{T}{\sqrt{n}}  +   T^2 K^{-\beta_Q-\beta_w} + T^3  K^{-2\beta_Q} + \right.\\
            \left.  T^3 K^{-\beta_Q} \sqrt{\frac{{K \log n \log T} }{n}}    + \frac{T^{3} K \log n \log T}{n}  \right).  \label{eqn:fast}
           \end{multline} 
\end{thm}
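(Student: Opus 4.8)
The plan is to build on the error decomposition used for Theorem~\ref{thm:slow}, but to exploit the realizability of the ratio functions (Assumption~\ref{ass:basis_w}) to sharpen the horizon dependence of the leading stochastic term from $T^{3/2}$ to $T$. Starting from the telescoping identity $\hat\nu(\pi)-\nu(\pi)=\mathcal{E}_1^\pi[\hat Q_1^\pi-Q_1^\pi]=\sum_{t=1}^T \mathcal{E}_t^\pi[\hat Q_t^\pi - r_t - \mathcal{P}_t^\pi\hat Q_{t+1}^\pi]$, I would write each Bellman residual $\hat Q_t^\pi-r_t-\mathcal{P}_t^\pi\hat Q_{t+1}^\pi$ as the empirical least-squares fit $\hat\Pi_t\eta_t$ of the centered regression noise $\eta_t$, plus the empirical-minus-population projection of the conditional mean $g_t:=r_t+\mathcal{P}_t^\pi\hat Q_{t+1}^\pi$, plus the population projection bias $\Pi_t g_t - g_t$. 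Using $\mathcal{E}_t^\pi[f]=\EE[w_t^\pi(S_t,A_t)f(S_t,A_t)]$ and summing over $t$ produces exactly the three groups referenced in Section~\ref{sec:proof}: the first-order term \eqref{eqn:def_E1}, a collection of interaction/estimation terms, and the bias-induced term \eqref{eqn:def_E3}.

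The heart of the argument is the first-order term $E_1=\sum_t \mathcal{E}_t^\pi[\hat\Pi_t\eta_t]$. Writing $\mathcal{E}_t^\pi[\hat\Pi_t\eta_t]=(\mathcal{E}_t^\pi\phi_K)^\tp\hat\Sigma_t^{-1}\tfrac1n\sum_i\phi_K(S_{i,t},A_{i,t})\eta_{i,t}$ and noting $\EE[w_t^\pi\phi_K]=\mathcal{E}_t^\pi\phi_K$, the coefficient $(\mathcal{E}_t^\pi\phi_K)^\tp\Sigma_t^{-1}\phi_K$ equals the $\calL_2$-projection $\Pi_t w_t^\pi$. Here Assumption~\ref{ass:basis_w} is decisive: it guarantees $\Pi_t w_t^\pi$ approximates the true ratio $w_t^\pi$ uniformly to order $K^{-\beta_w}$, so after (i) replacing $\hat\Sigma_t$ by $\Sigma_t$ and (ii) replacing the data-dependent $\hat V_{t+1}^\pi$ inside $\eta_t$ by the true value $V_{t+1}^\pi$, the main piece of $E_1$ collapses to the genuine MIS average $\sum_t\tfrac1n\sum_i w_t^\pi(S_{i,t},A_{i,t})\,\xi_{i,t}$ with $\xi_{i,t}=R_{i,t}+V_{t+1}^\pi(S_{i,t+1})-Q_t^\pi(S_{i,t},A_{i,t})$ the true Bellman noise. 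Along each trajectory $\{\xi_{i,t}\}_t$ is a martingale-difference sequence, so by the variance-telescoping argument of Lemma~\ref{lem:decompoeision} the variance of this average is $\tfrac1n\sum_t\EE[w_t^\pi(S_t,A_t)^2\,\Var(\xi_t\mid S_t,A_t)]\le \tfrac{\tilde\kappa}{n}\sum_t\EE^\pi[\Var(\xi_t\mid S_t,A_t)]=\tfrac{\tilde\kappa}{n}\Var^\pi(\sum_t R_t)=O(\tilde\kappa\,T^2/n)$, since the return is bounded by $T$. This yields the $T/\sqrt n$ leading term and crucially avoids both the factor $\kappa$ and the extra $\sqrt T$ incurred in Theorem~\ref{thm:slow}, where only the projected weight (not $w_t^\pi$ itself) is available and the per-step terms must be bounded through \eqref{eqn:kappa}.

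The remaining terms are the price of the two replacements and of the approximation biases. For the bias term \eqref{eqn:def_E3}, I would use orthogonality of the projection residual: $\EE[w_t^\pi(\Pi_t g_t-g_t)]=\EE[(w_t^\pi-\Pi_t w_t^\pi)(\Pi_t g_t-g_t)]$, a product of two residuals, which by Cauchy--Schwarz and Assumptions~\ref{ass:bias} and~\ref{ass:basis_w} is $\lesssim \|g_t\|_\infty K^{-\beta_Q-\beta_w}$; since $\|g_t\|_\infty=O(T)$, summing over $t$ gives the $T^2K^{-\beta_Q-\beta_w}$ contribution. The portion of the propagated $Q$-approximation bias that is not absorbed by this orthogonality enters quadratically and yields the $T^3K^{-2\beta_Q}$ term. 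The interaction terms — the $\hat\Sigma_t$-versus-$\Sigma_t$ fluctuation and the cross term between the estimation error $\hat V_{t+1}^\pi-V_{t+1}^\pi$ and the noise — are controlled by a matrix concentration bound for $\|\hat\Sigma_t^{-1/2}\Sigma_t\hat\Sigma_t^{-1/2}-I\|$ together with the uniform sup-norm convergence rate of $\hat Q_t^\pi$ furnished by Assumption~\ref{ass:basis}(iii); these produce the $T^3K^{-\beta_Q}\sqrt{K\log n\log T/n}$ (bias times fluctuation) and $T^3K\log n\log T/n$ (fluctuation squared) terms, and the stated conditions $K=\smallO(\min\{\sqrt{n/(\log n\log T)},\,n/(T^2\log n\log T)\})$ and $T=\smallO(K^{\beta_Q})$ are precisely what keep all of these subdominant to $T/\sqrt n$.

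I expect the main obstacle to be the backward-recursive data dependence: because $\hat Q_{t+1}^\pi$ is estimated from the same trajectories, the noise $\eta_t$ is not independent across $t$ and $\hat\Pi_t\eta_t$ is not a clean average, so the replacement of $\hat V_{t+1}^\pi$ by $V_{t+1}^\pi$ must be justified uniformly over the sieve class and its error propagated through all $T$ steps without losing more than a polynomial factor in $T$. Making the martingale variance-telescoping rigorous under this dependence, and simultaneously verifying that the $w$-approximation residual $\Pi_t w_t^\pi-w_t^\pi$ couples with the noise only at higher order so that the $\tilde\kappa$-weighted, $\kappa$-free variance bound survives, is the delicate part; everything else reduces to the concentration and approximation estimates already assembled for Theorem~\ref{thm:slow}.
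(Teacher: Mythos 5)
Your proposal is correct and follows essentially the same route as the paper: the same $E_1$/$E_2$/$E_3$ decomposition, the same key move of swapping the projected weight $(\mathcal{E}_t^\pi\phi_K)^\tp\Sigma_t^{-1}\phi_K$ for the true ratio $w_t^\pi$ via Assumption~\ref{ass:basis_w} so that the law-of-total-variance telescoping plus $\Var^\pi(\sum_t R_t)\le T^2$ yields the $T/\sqrt n$ leading term, and the same orthogonality-plus-Cauchy--Schwarz argument giving the $T^2K^{-\beta_Q-\beta_w}$ doubly-robust bias term. The remaining interaction and fluctuation terms you describe match the paper's treatment of $E_2$ and the second component of $E_3$.
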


Similar to Theorem \ref{thm:slow}, %when $T$ is relatively slow (see \eqref{eqn:T_condition}), \eqref{eqn:comp3} can be upper bounded by a term in polynomial order of $T$. In this case, 
we require $T$ to satisfy \eqref{eqn:T_condition} in order to ensure the existence of $K$.   %For the following discussion, we focus on the case that $T$ fulfills \eqref{eqn:T_condition}.
The first term \eqref{eqn:fast} corresponds to the convergence error with respect to the first-order term E1 defined in \eqref{eqn:def_E1} % \ray{create equation number} 
in Section \ref{sec:proof} of the Appendix. The second term in \eqref{eqn:fast} is the error term that takes into account of the projection error of the probability ratio function $w^\pi_t$.
As illustrated in the bound \eqref{eqn:fast} in Theorem \ref{thm:fast}, even though we do not explicitly model the probability ratio function $w^\pi_t$, there exists some ``double-rate robustness'' property in FQE that helps to obtain a faster rate convergence as long as the probability ratio function can be approximated well by sieve bases that are used to model $Q$ functions. This responds to \quest{3}. Next, we take a deeper look for the bound \eqref{eqn:fast} under different scenarios. To simplify the discussion, we take $\beta_Q = \beta_w$, where we assume $w_t^\pi$ and $Q_t^\pi$ have the same degree of smoothness.

\begin{corollary}
    \label{cor:cases}
    Under Assumptions %\ref{ass: Markovian}-\ref{ass:basis}, 
    % \ZL{Assumptions number is not correct} \jiayi{Modified} 
    \ref{ass:realizability}, \ref{ass:basis}, \ref{ass:covering}, \ref{ass:bias} and \ref{ass:basis_w}, ,  and  further assume $\beta_Q = \beta_w = \beta > 1/2 $, $T\log T = \smallO\left(\left({n}/{\log n}\right)^{{\beta}/{(1+2\beta)}}\right)$, by taking the optimal order of $K$ such that 
    \begin{align}
        \label{eqn:korder}
        K \asymp \left\{ {n}/({\log n \log T}) \right\}^{\frac{1}{1+2\beta}},
    \end{align}
    we have   W.H.P,  \quad $ |\hat{\nu}(\pi) - \nu(\pi)|=$
    \begin{align}
        \label{eqn:cases}
        \begin{cases}
            \bigO\left( \frac{T}{\sqrt{n}} \right),  \mathrm{if }\  T = \smallO\left( n^{\frac{2\beta - 1}{4(1+ 2\beta)}}  (\log n)^{\frac{-\beta}{1+2\beta}}\right), \\ 
            \bigO \left( T^3 \left( \frac{n}{\log n \log T} \right)^{\frac{-2\beta}{1+2\beta}} \right),  \mathrm{otherwise}.
        \end{cases} 
    \end{align}
    % \ZL{it seems that we have consistency or not?}
    % \begin{enumerate}
    %     \item[(i)] when $T = \smallO\left( n^{\frac{2\beta - 1}{4(1+ 2\beta)}}  (\log n)^{\frac{-\beta}{1+2\beta}}\right) $, 
    %     \begin{align}
    %         |\hat{\nu}(\pi) - \nu(\pi)|  = \bigOp\left( \frac{T}{\sqrt{n}} \right)
    %     \end{align}
    %     \item[(ii)] when $T = \smallO\left( n^{\frac{2\beta - 1}{4(1+ 2\beta)}}  (\log n)^{\frac{-\beta}{1+2\beta}}\right) $, 
    %     \begin{align}
    %         |\hat{\nu}(\pi) - \nu(\pi)|  = \bigOp\left( \frac{T}{\sqrt{n}} \right)
    %     \end{align}
    % \end{enumerate}
\end{corollary}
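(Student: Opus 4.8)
The plan is to derive Corollary \ref{cor:cases} by directly substituting the prescribed $K \asymp \{n/(\log n\log T)\}^{1/(1+2\beta)}$ into the five-term bound \eqref{eqn:fast} of Theorem \ref{thm:fast} and tracking which term dominates. First I would verify that this $K$ is admissible, i.e., that it satisfies the constraints $K=\smallO(\min\{\sqrt{n/(\log n\log T)},\, n/(T^2\log n\log T)\})$ and $T=\smallO(K^{\beta_Q})$ required by Theorem \ref{thm:fast}. Since $\beta>1/2$ gives $1/(1+2\beta)<1/2$, the first bound holds automatically; the second bound and $T=\smallO(K^{\beta})$ both reduce to the assumed growth condition $T\log T=\smallO((n/\log n)^{\beta/(1+2\beta)})$, which is exactly why this condition appears in the hypotheses.

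For the substitution, write $m:=n/(\log n\log T)$ so that $K\asymp m^{1/(1+2\beta)}$ and, using $\log n\log T/n=1/m$, the five terms of \eqref{eqn:fast} become (with $\beta_Q=\beta_w=\beta$): the first term stays $T/\sqrt n$; the second is $T^2K^{-2\beta}=T^2 m^{-2\beta/(1+2\beta)}$; the third is $T^3K^{-2\beta}=T^3 m^{-2\beta/(1+2\beta)}$; the fifth is $T^3K/m=T^3 m^{1/(1+2\beta)-1}=T^3 m^{-2\beta/(1+2\beta)}$. The only term requiring care is the fourth, $T^3K^{-\beta}\sqrt{K/m}=T^3 K^{1/2-\beta}m^{-1/2}$: substituting $K\asymp m^{1/(1+2\beta)}$ and simplifying the exponent $(1/2-\beta)/(1+2\beta)-1/2=-2\beta/(1+2\beta)$ shows it too equals $T^3 m^{-2\beta/(1+2\beta)}$. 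Hence the second through fifth terms all share the factor $m^{-2\beta/(1+2\beta)}$, the last three dominate the second (higher power of $T$), and \eqref{eqn:fast} collapses to $\bigO(T/\sqrt n + T^3 m^{-2\beta/(1+2\beta)})$, which already yields the ``otherwise'' case of \eqref{eqn:cases}.

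To split into the two cases I would compare the two surviving terms. The first-order term $T/\sqrt n$ dominates precisely when $T/\sqrt n \gtrsim T^3 m^{-2\beta/(1+2\beta)}$, i.e. $T^2\lesssim m^{2\beta/(1+2\beta)}n^{-1/2}$; plugging in $m=n/(\log n\log T)$, discarding the subdominant $\log T$, and computing the net power of $n$ as $2\beta/(1+2\beta)-1/2=(2\beta-1)/(2(1+2\beta))$ gives the threshold $T=\smallO(n^{(2\beta-1)/(4(1+2\beta))}(\log n)^{-\beta/(1+2\beta)})$ stated in \eqref{eqn:cases}. Below this threshold the $T/\sqrt n$ term dominates (case one); above it the $T^3 m^{-2\beta/(1+2\beta)}$ term dominates (case two). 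The main obstacle is not conceptual, since Theorem \ref{thm:fast} already supplies the bound, but purely the exponent bookkeeping, especially verifying that the cross term (the fourth term) collapses to the same $m^{-2\beta/(1+2\beta)}$ order; this collapse is precisely what makes the prescribed $K$ the balancing (``optimal'') choice, as it simultaneously equates the squared-bias-type terms $\asymp K^{-2\beta}$ with the variance-type term $\asymp K/n$.
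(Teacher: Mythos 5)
Your proposal is correct and follows essentially the same route the paper intends: the corollary is a direct consequence of Theorem \ref{thm:fast}, obtained by checking that the prescribed $K$ satisfies the theorem's admissibility constraints and then substituting it into the five-term bound \eqref{eqn:fast}, where the exponent bookkeeping (in particular the collapse of the cross term $T^3K^{1/2-\beta}m^{-1/2}$ to $T^3m^{-2\beta/(1+2\beta)}$) and the threshold comparison $T/\sqrt{n}$ versus $T^3 m^{-2\beta/(1+2\beta)}$ are exactly as you computed.
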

As we can see, \eqref{eqn:korder} is equivalent to the optimal order for number of basis functions in common nonparametric regression, up to a logarithmic term \citep{chen2018optimal}.  If the number of horizon $T$ is bounded, % \ray{bounded}, 
 we can achieve the optimal convergence rate ($n^{-1/2}$) for $|\hat{\nu}(\pi) - \nu(\pi)|$ even though we estimate $Q$ functions nonparametricly. Compared to Corollary \ref{cor:slow}, we do not require $\beta_Q>1$ to achieve such optimal convergence rate, adding extra understanding to \quest{1}. In Corollary \ref{cor:slow}, the number of basis functions needs to be chosen of order larger than \eqref{eqn:korder} in order to remove the effect of approximation bias.
Next, we focus on the horizon dependence (\quest{2}). 
 When the horizon $T$ is allowed to grow with $n$, different regimes for upper bounds emerge depending on the relative order of $T$ to $n$.

In the scenario where $T$ grows relatively slowly compared to $n$ (case 1 in \eqref{eqn:cases}), the convergence exhibits a $n^{-1/2}$ dependence with respect to $n$, with a linear dependence on the horizon.  To the best of our knowledge, this convergence rate aligns with the best-known rate for FQE in tabular settings \citep{yin2020asymptotically} (necessarily parametric), despite our analysis is conducted under a much more challenging nonparametric setting.  Conversely, when $T$
grows faster,
%does not grow slowly enough compared to $n$,
the error bound exhibits a cubic horizon dependence. %with a polynomial order of $3$.
In this case, it exhibits a $n^{-2\beta/(1+2\beta)}$ dependence on $n$, which is better than $n^{-1/2}$ dependence that we found in the aforementioned case. However, it is important to note that
%, combining the roles of $T$ and $n$ in this scenario,
the overall rate is slower than the case of slowly growing $T$ due to the stronger dependence on $T$.
 In \citet{yin2020asymptotically}, their higher order term exhibits a $n^{-1}$ dependence with respect to $n$ and a $T^{3/2}$ dependence with respect to $T$. In comparison to their tabular setting, our model introduces additional bias terms due to the continuous setting and nonparametric modeling.  It remains unclear whether this order can be further improved to match theirs. We leave this as a subject for future research.
\section{Simulation Study}
\label{sec:simu}
We conduct a simulation study to illustrate the behavior of the error $|\hat{\nu}(\pi) - \nu(\pi)|$ with respect to $n$ and $T$.
The goal here is to provide empirical evidence of our theoretical results, and so we use a relatively simple simulation setup for the purpose of clear demonstration.
Specifically, the data generative model is given as follows.
The state variable is a one-dimensional continuous variable 
and the action is a binary variable, i.e., \(\calA_t =  \{0,1\}\) for all $t$.
The initial state follows the uniform distribution within $[-2,2]$.
The transition dynamics are given by \(S_{i,t+1} = (2A_{i,t}-1)f(S_{i,t})\),  where \(f(x)\) is a function 
constructed from cubic B-spline basis functions as depicted in Figure \ref{fig:transition} in Appendix. 
% \begin{figure}[t]
%     \centering
%     \includegraphics[width = 0.5\linewidth]{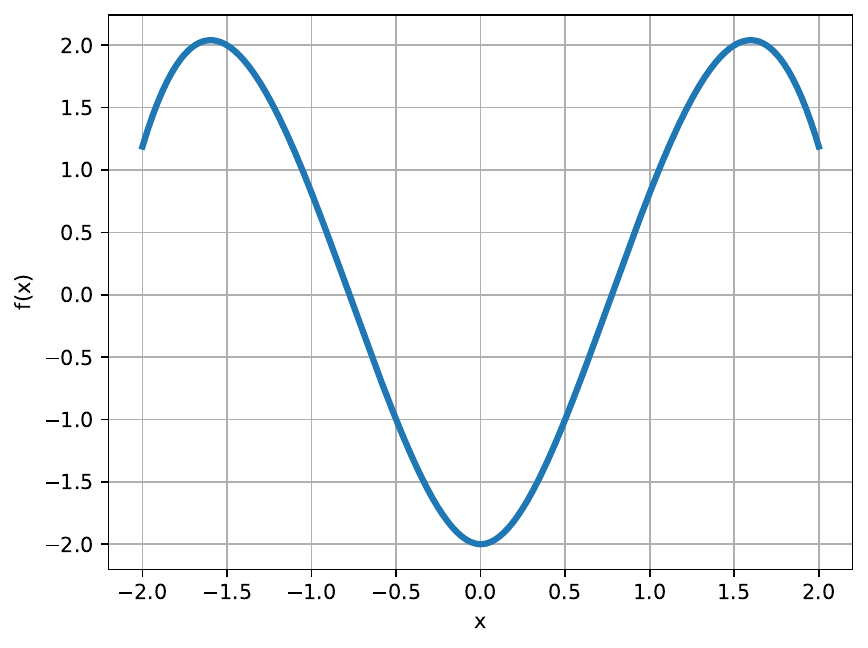}
%     \caption{Illustration of the function $f$ in the transition mechanism.}
%     \label{fig:transition}
% \end{figure}
The behavior policy independently follows a Bernoulli distribution with mean \(1/2\). The immediate reward \(R_{i,t}\) is defined as \(R_{i,t} = 2S_{i,t+1}\). Though the data follows the homogeneous MDP, we treat it as an inhomogeneous setting and use FQE discussed in Section \ref{sec:FQE} to estimate the value for different target policies.
We evaluate the following two different target policies.
\vspace*{-2mm}
\begin{enumerate}[leftmargin=*]
	\item[(a)] \(\pi_t(a=1\mid s) = 0.5, \; \text{for} \;  s\in\mathcal{S}, t = 1,\dots, T\). 
	This policy is the same as the behavior one.
    %np.exp(spline(S[:,0]))/(1 + np.exp(spline(S[:,0])))
    \item[(b)] \(\pi_t(a=1\mid s) = \exp\{f(s)\}/\{1 + \exp(f(s))\}, \, \text{for} \;   s\in\mathcal{S}, t = 1,\dots, T.\)
	This policy is smooth with respect to the state variable.
	% \item[(c)]
	% \jiayi{Shall we add this optimal policy?} \ZL{At least no in the main text.}
    % \[
	% 	\pi_2(a=1\mid s) = 	
	% 	\begin{cases}
	% 		1 & \text{if}\  f(s) > 0\\
	% 		0 & \text{otherwise}
	% 	  \end{cases},
    %   \quad \text{for} \quad  s\in\mathcal{S}, t = 1,\dots, T.
	% \]
	% This policy is a discontinuous function with respect to the state variable. 
    \vspace*{-2mm}
\end{enumerate}
% We evaluate the following two different target policies.
For every target policy, we evaluate values with $n = 200, 400, \dots, 2000$, and $T =  20, 40, \dots,  200$. We use cubic B-spline to construct basis functions at every step $t$. The knots are placed at evenly distributed percentiles of samples.   The choice of $K$ is tricky to decide in practice. We consider two different approaches to specify $K$. First, following \eqref{eqn:korder} in  Corollary \ref{cor:cases}, we consider taking $\beta =2$, as the transition function has a continuous second derivative. And we try several different constants $c$ with $K = c n^{1/5}$. We found that $c=3$ yields an appropriate result and we fix $K = 3n^{1/5}$. For the second approach, we use leave-one-out cross-validation to decide $K$ at every step.  

Figure \ref{fig:policya} %and \ref{fig:policyb} %and \ref{fig:policyc} 
summarize the simulation results for target policies (a) over 200 simulation replicates when $K$ is selected by LOOCV.    As we can see,  we observe a roughly linear dependence between $|\hat{\nu}(\pi) - \nu(\pi)|$ and $T$, especially when $T$ is relatively small compared to $n$. This is reflected in \eqref{eqn:cases} in Corollary \ref{cor:cases}. The simulation results when $K$ is selected using the criterion $K = 3n^{1/5}$ and the results for policy (b) can be found in Section  \ref{sec:more_figures} in Appendix.  %The relationship between  between $|\hat{\nu}(\pi) - \nu(\pi)|$ and $n$ exhibits a roughly $1/\sqrt{n}$  dependence.

\begin{figure}[ht]
    \centering
    % \begin{subfigure}[b]{0.4\textwidth}
    %     \centering
    %     \includegraphics[width=\textwidth]{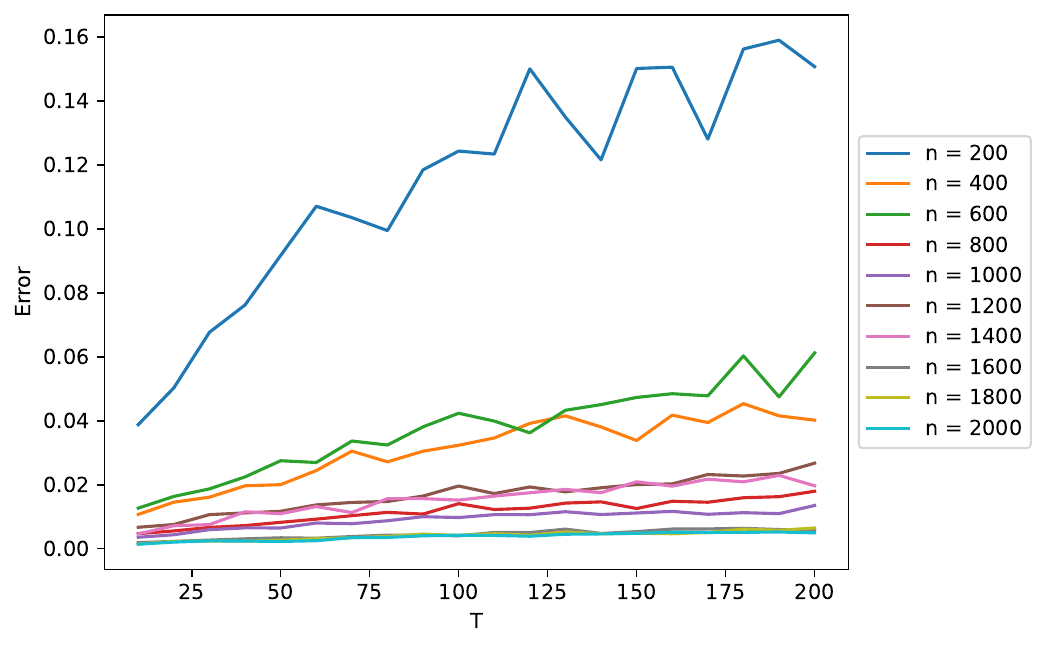}
    %     \caption{$K = 3n^{1/5}$}
    %     % \label{fig:y equals x}
    % \end{subfigure}
    % \hfill
    % \begin{subfigure}[b]{0.4\textwidth}
    %     \centering
    %     \includegraphics[width=\textwidth]{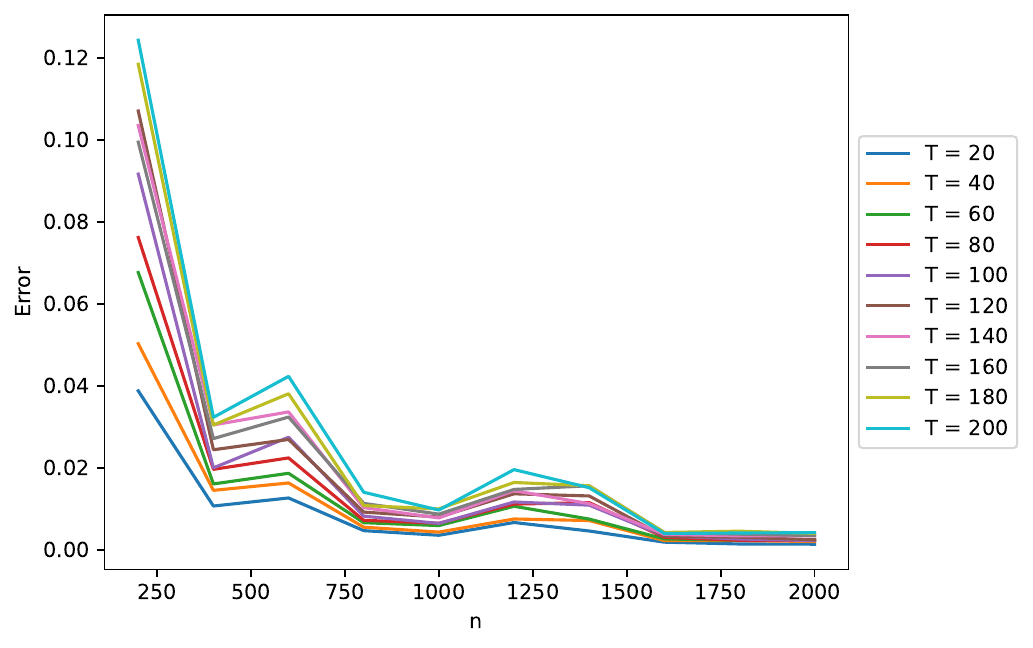}
    %     \caption{$K = 3n^{1/5}$}
    %     % \label{fig:three sin x}
    % \end{subfigure}
    % \hfill
    \begin{subfigure}[b]{0.43\textwidth}
        \centering
        \includegraphics[width=0.9\textwidth]{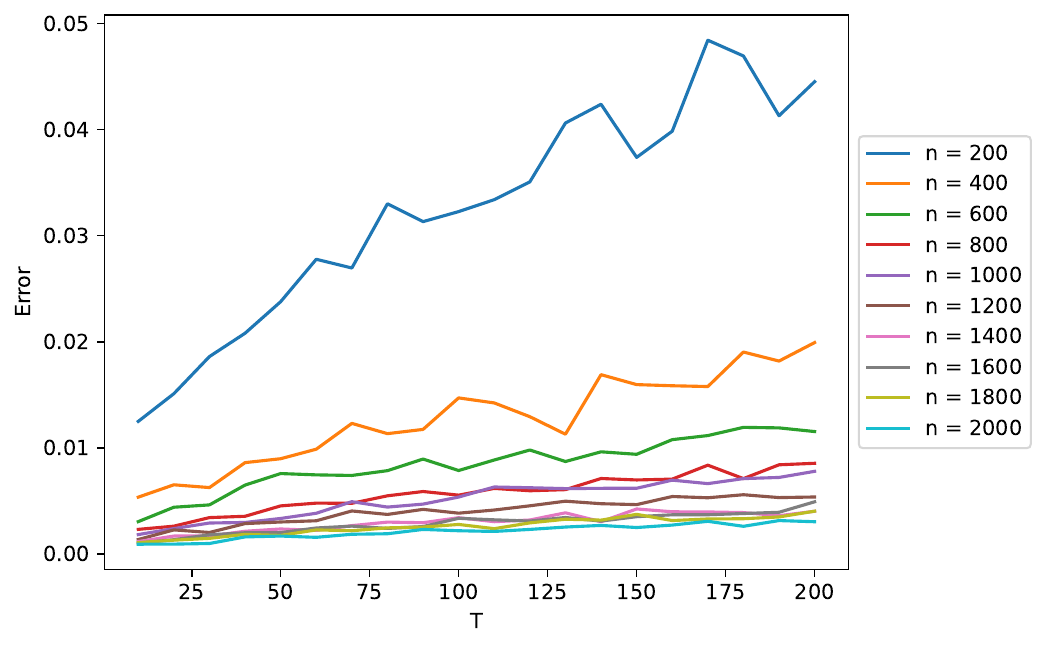}
        % \caption{$K$ is selected by LOOCV}
        % \label{fig:five over x}
    \end{subfigure}
    \hfill
    \begin{subfigure}[b]{0.43\textwidth}
        \centering
        \includegraphics[width=0.9\textwidth]{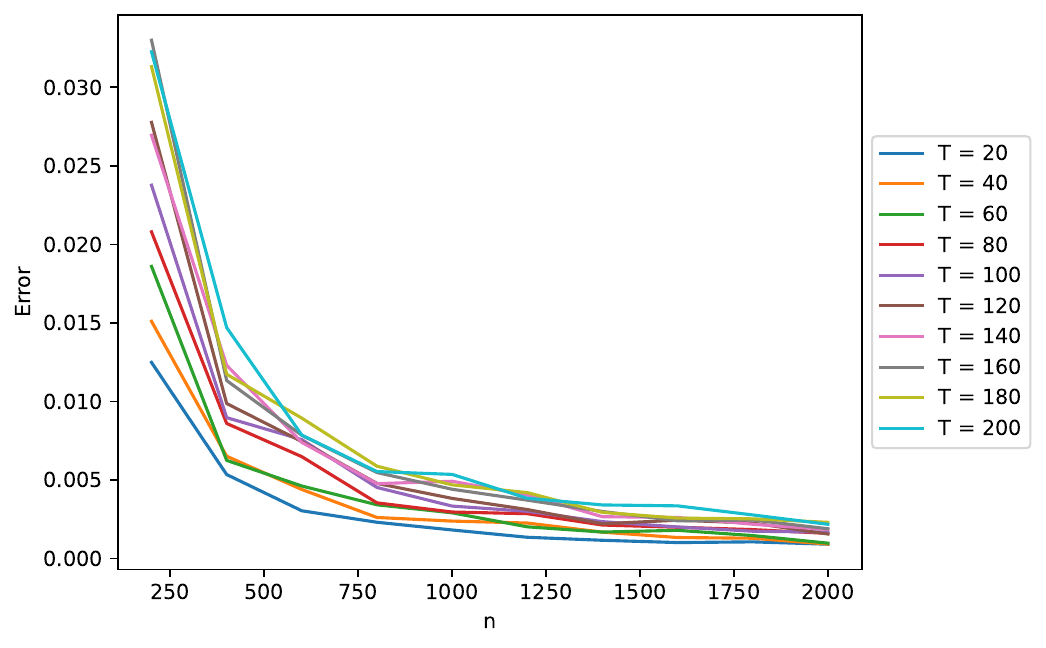}
        % \caption{$K$ is selected by LOOCV}
        % \label{fig:five over x}
    \end{subfigure}
       \caption{Simulation results for $ |\hat{\nu}(\pi) - \nu(\pi)| $ when the target policy $\pi$ is (a) and $K$ is selected by LOOCV. The upper plot demonstrates the change of error along with the change of $T$, different curves represent different $n$. The bottom plot demonstrates the change of error along with the change of $n$, different curves represent different $T$. }
       \label{fig:policya}
\end{figure}

\section*{Impact Statement}
This paper presents work whose goal is to advance the field of Machine Learning. There are many potential societal consequences of our work, none which we feel must be specifically highlighted here.

\bibliographystyle{icml2024}
\bibliography{refer.bib}

%%%%%%%%%%%%%%%%%%%%%%%%%%%%%%%%%%%%%%%%%%%%%%%%%%%%%%%%%%%%%%%%%%%%%%%%%%%%%%%
%%%%%%%%%%%%%%%%%%%%%%%%%%%%%%%%%%%%%%%%%%%%%%%%%%%%%%%%%%%%%%%%%%%%%%%%%%%%%%%
% APPENDIX
%%%%%%%%%%%%%%%%%%%%%%%%%%%%%%%%%%%%%%%%%%%%%%%%%%%%%%%%%%%%%%%%%%%%%%%%%%%%%%%
%%%%%%%%%%%%%%%%%%%%%%%%%%%%%%%%%%%%%%%%%%%%%%%%%%%%%%%%%%%%%%%%%%%%%%%%%%%%%%%
\newpage
\appendix
\onecolumn

\section{Additional Figures for the Simulation Results}
\label{sec:more_figures}

\begin{figure}[h]
    \centering
    \includegraphics[width = 0.5\linewidth]{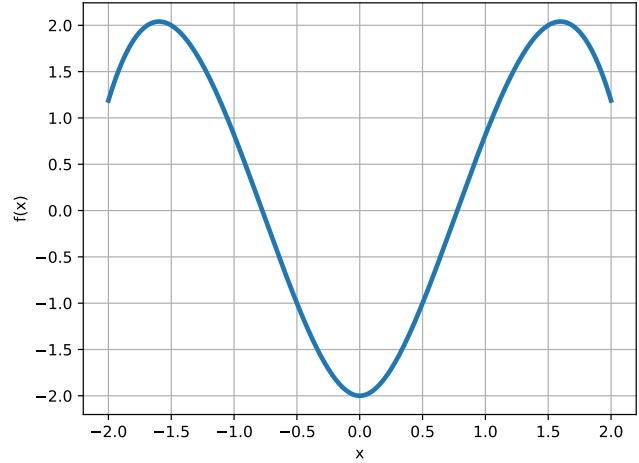}
    \caption{Illustration of the function $f$ in the transition mechanism.}
    \label{fig:transition}
\end{figure}

\begin{figure}[ht]
    \centering
    % \begin{subfigure}[b]{0.45\textwidth}
    %     \centering
    %     \includegraphics[width=\textwidth]{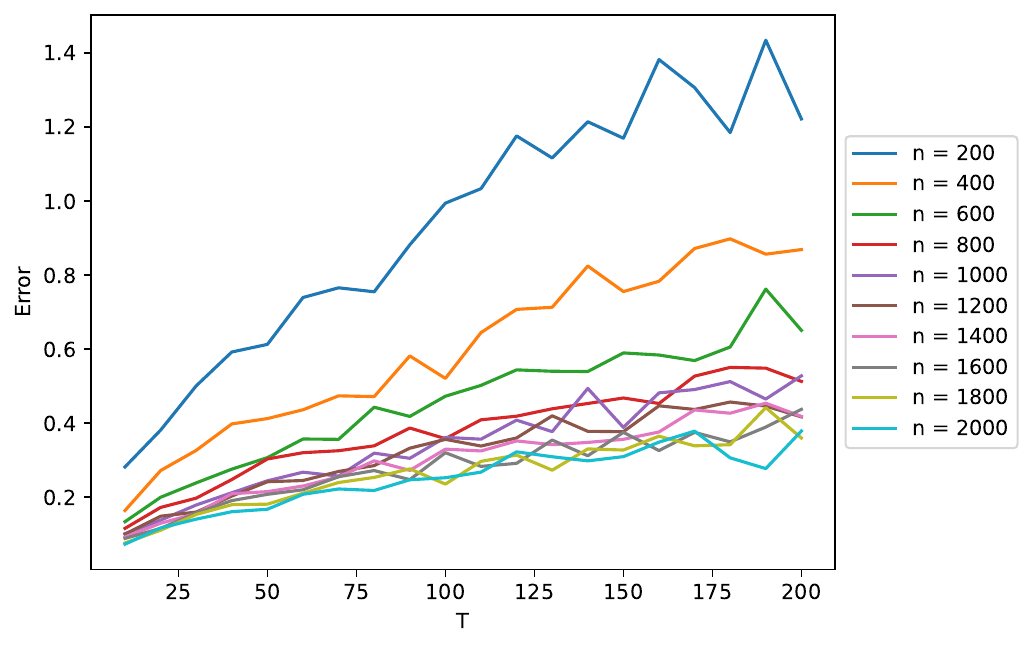}
    %     \caption{$K = 3n^{1/5}$}
    %     % \label{fig:y equals x}
    % \end{subfigure}
    % \hfill
    % \begin{subfigure}[b]{0.45\textwidth}
    %     \centering
    %     \includegraphics[width=\textwidth]{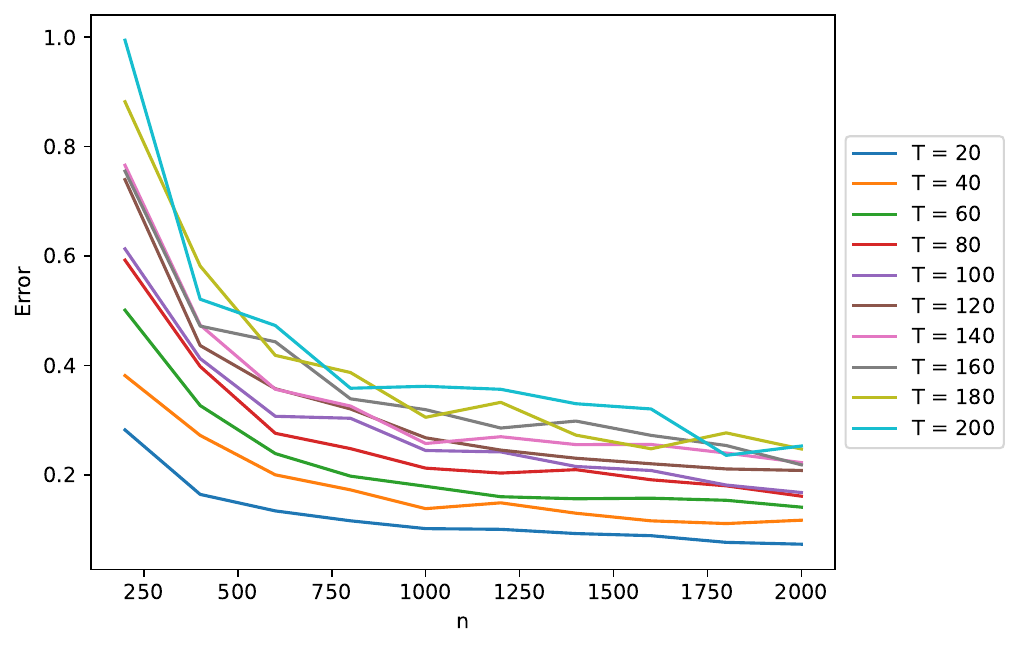}
    %     \caption{$K = 3n^{1/5}$}
    %     % \label{fig:three sin x}
    % \end{subfigure}
    % \hfill
    \begin{subfigure}[b]{0.43\textwidth}
        \centering
        \includegraphics[width=\textwidth]{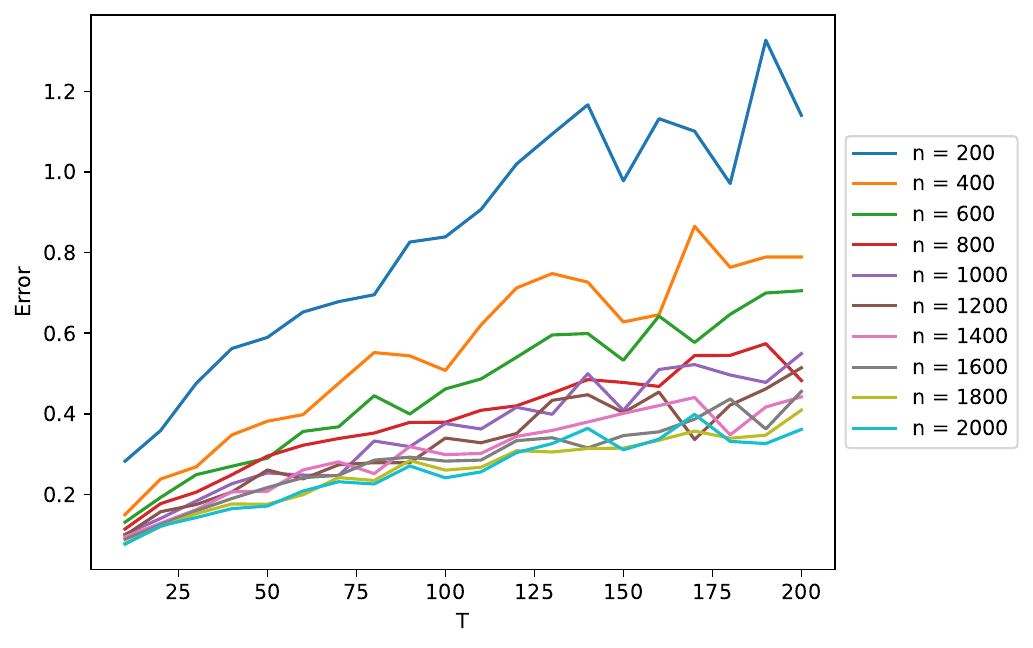}
        % \caption{$K$ is selected by LOOCV}
        % \label{fig:five over x}
    \end{subfigure}
    \hfill
    \begin{subfigure}[b]{0.43\textwidth}
        \centering
        \includegraphics[width=\textwidth]{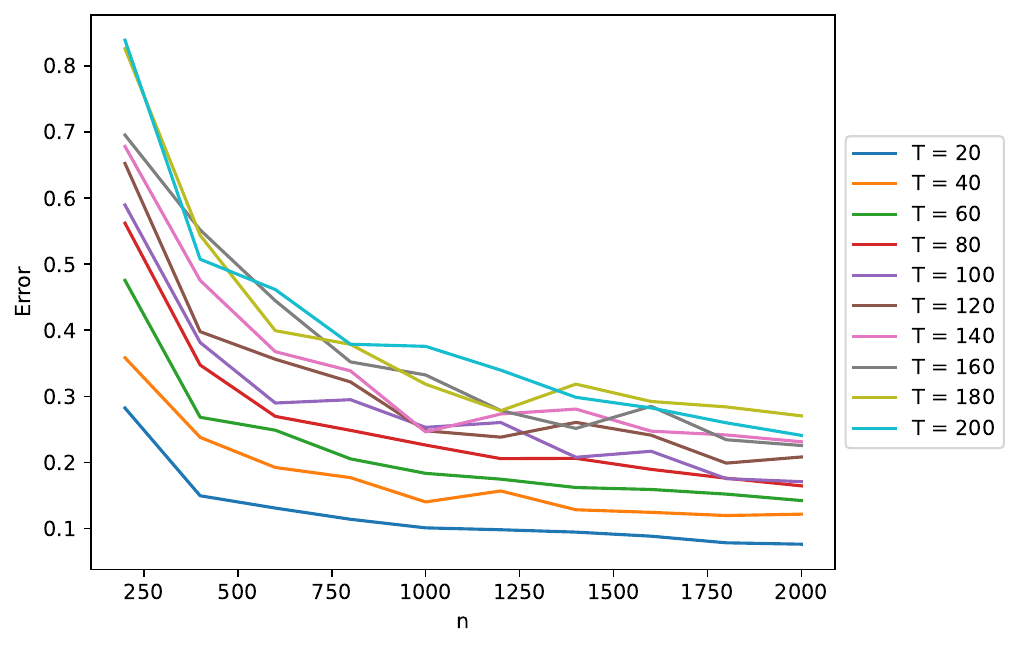}
        % \caption{$K$ is selected by LOOCV}
        % \label{fig:five over x}
    \end{subfigure}
       \caption{Simulation results for $ |\hat{\nu}(\pi) - \nu(\pi)| $ when the target policy $\pi$ is (b). See detailed description in Figure \ref{fig:policya}.}
       \label{fig:policyb}
\end{figure}

\begin{figure}[ht]
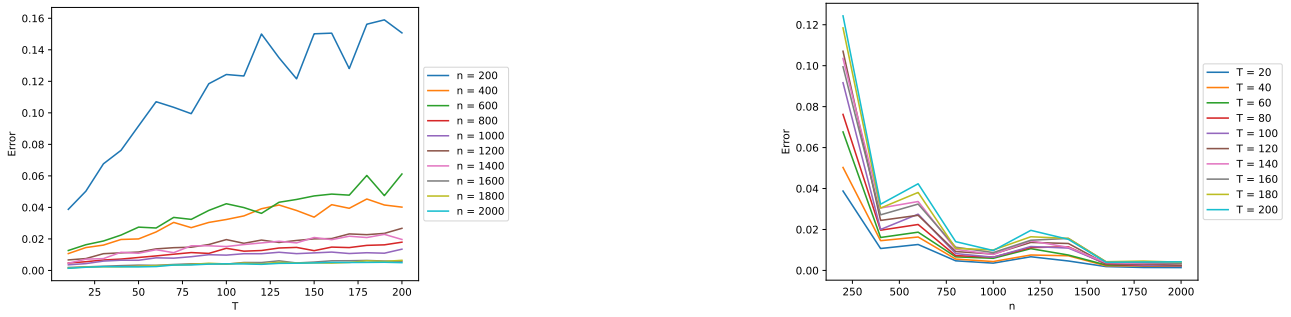

    \centering
    \begin{subfigure}[b]{0.4\textwidth}
        \centering
        \includegraphics[width=\textwidth]{policya_K_n.pdf}
        % \caption{$K = 3n^{1/5}$}
        % \label{fig:y equals x}
    \end{subfigure}
    \hfill
    \begin{subfigure}[b]{0.4\textwidth}
        \centering
        \includegraphics[width=\textwidth]{policya_K_T.pdf}
        % \caption{$K = 3n^{1/5}$}
        % \label{fig:three sin x}
    \end{subfigure}
    % \hfill
    % \begin{subfigure}[b]{0.43\textwidth}
    %     \centering
    %     \includegraphics[width=\textwidth]{policya_CV_n.pdf}
    %     % \caption{$K$ is selected by LOOCV}
    %     % \label{fig:five over x}
    % \end{subfigure}
    % \hfill
    % \begin{subfigure}[b]{0.43\textwidth}
    %     \centering
    %     \includegraphics[width=\textwidth]{policya_CV_T.pdf}
    %     % \caption{$K$ is selected by LOOCV}
    %     % \label{fig:five over x}
    % \end{subfigure}
       \caption{Simulation results for $ |\hat{\nu}(\pi) - \nu(\pi)| $ when the target policy $\pi$ is (a) and $K$ is selected as $K = 3n^{1/5}$. The left plot demonstrate the change of error along with the change of $T$, different curves represent different $n$. The right plot demonstrates the change of error along with the change of $n$, different curves represent different $T$. }
       \label{fig:policya_more}
\end{figure}

\begin{figure}[h]
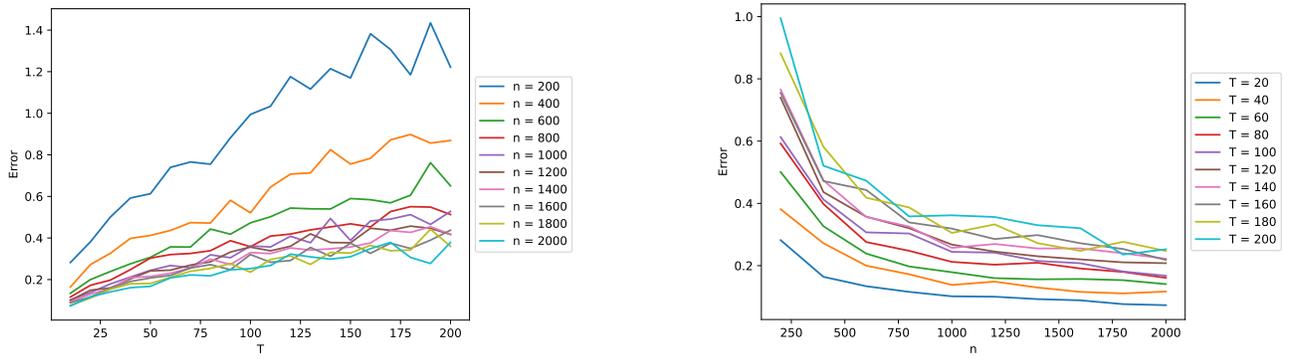

    \centering
    \begin{subfigure}[b]{0.45\textwidth}
        \centering
        \includegraphics[width=\textwidth]{policyb_K_n.pdf}
        % \caption{$K = 3n^{1/5}$}
        % \label{fig:y equals x}
    \end{subfigure}
    \hfill
    \begin{subfigure}[b]{0.45\textwidth}
        \centering
        \includegraphics[width=\textwidth]{policyb_K_T.pdf}
        % \caption{$K = 3n^{1/5}$}
        % \label{fig:three sin x}
    \end{subfigure}
    % \hfill
    % \begin{subfigure}[b]{0.43\textwidth}
    %     \centering
    %     \includegraphics[width=\textwidth]{policyb_CV_n.pdf}
    %     % \caption{$K$ is selected by LOOCV}
    %     % \label{fig:five over x}
    % \end{subfigure}
    % \hfill
    % \begin{subfigure}[b]{0.43\textwidth}
    %     \centering
    %     \includegraphics[width=\textwidth]{policyb_CV_T.pdf}
    %     % \caption{$K$ is selected by LOOCV}
    %     % \label{fig:five over x}
    % \end{subfigure}
       \caption{Simulation results for $ |\hat{\nu}(\pi) - \nu(\pi)| $ when the target policy $\pi$ is (b). See detailed description in Figure \ref{fig:policya_more}.}
       \label{fig:policyb_more}
\end{figure}

In addition to two policies evaluated in Section \ref{sec:simu}, we also evaluate the following policy 
\begin{itemize}
    \item[(c)]
	% \jiayi{Shall we add this optimal policy?} \ZL{At least no in the main text.}
    \[
		\pi_2(a=1\mid s) = 	
		\begin{cases}
			1 & \text{if}\  f(s) > 0\\
			0 & \text{otherwise}
		  \end{cases},
      \quad \text{for} \quad  s\in\mathcal{S}, t = 1,\dots, T.
	\]
	This policy is a discontinuous function with respect to the state variable. 
\end{itemize}

\begin{figure}[h]
    \centering
    % \begin{subfigure}[b]{0.45\textwidth}
    %     \centering
    %     \includegraphics[width=\textwidth]{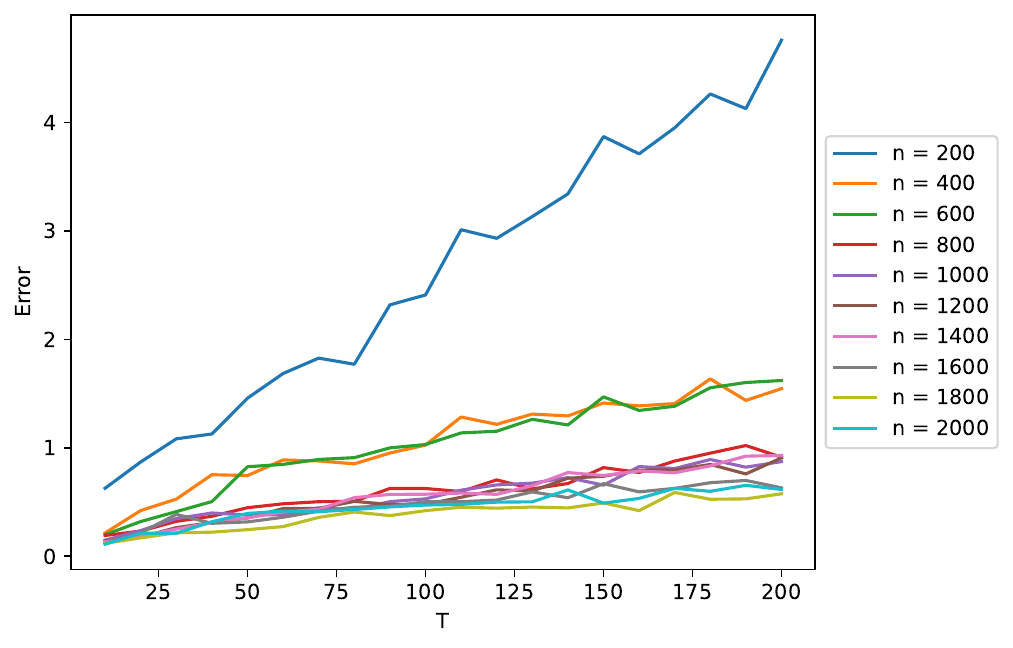}
    %     \caption{$K = 3n^{1/5}$}
    %     % \label{fig:y equals x}
    % \end{subfigure}
    % \hfill
    % \begin{subfigure}[b]{0.45\textwidth}
    %     \centering
    %     \includegraphics[width=\textwidth]{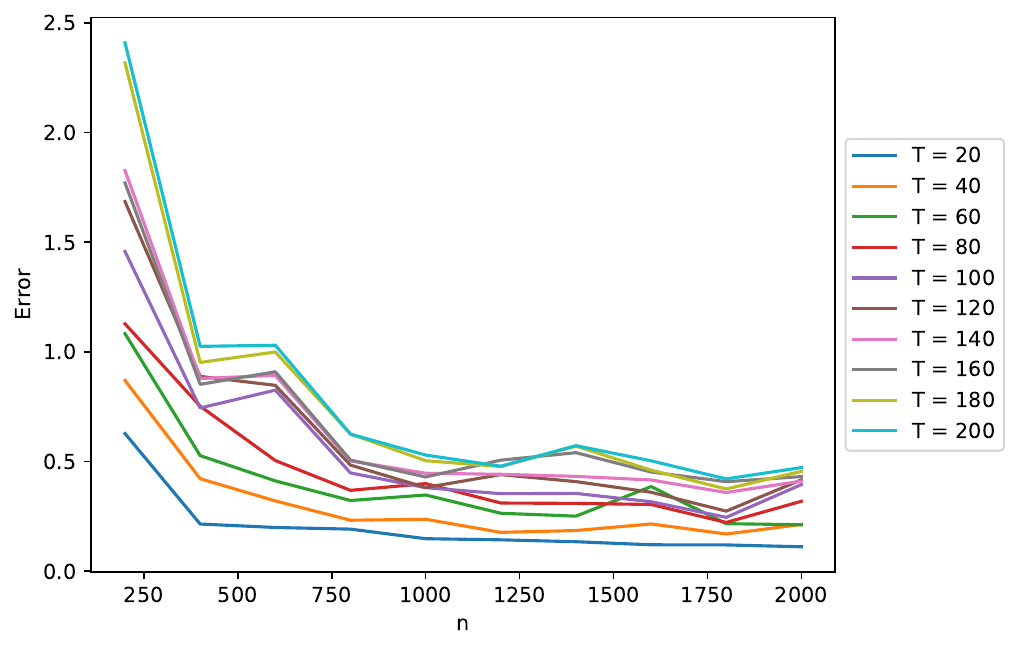}
    %     \caption{$K = 3n^{1/5}$}
    %     % \label{fig:three sin x}
    % \end{subfigure}
    % \hfill
    \begin{subfigure}[b]{0.43\textwidth}
        \centering
        \includegraphics[width=\textwidth]{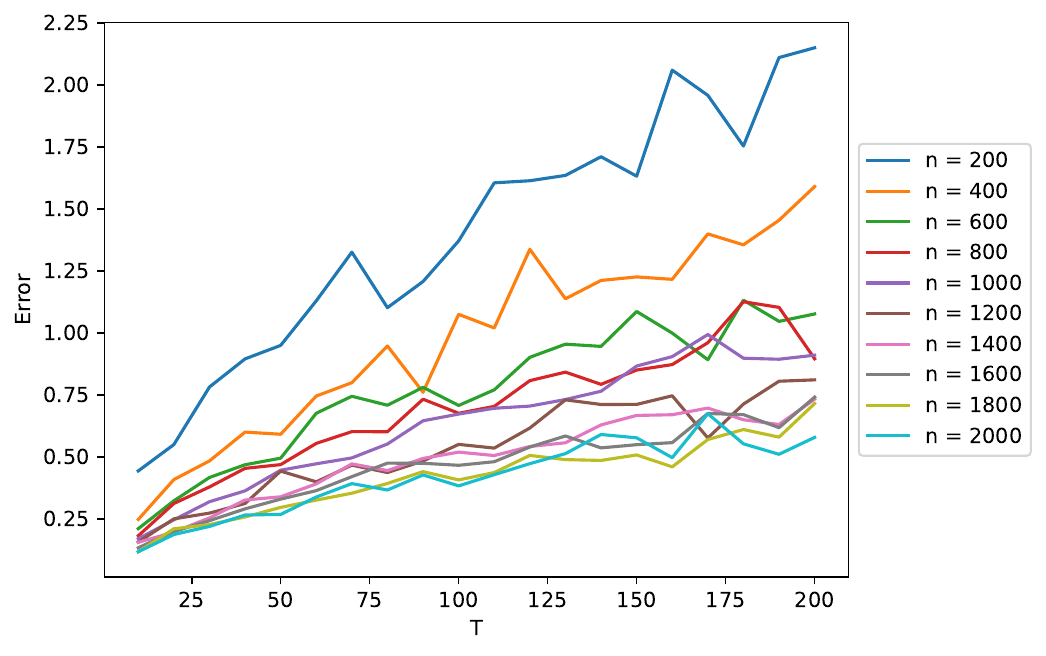}
        % \caption{$K$ is selected by LOOCV}
        % \label{fig:five over x}
    \end{subfigure}
    \hfill
    \begin{subfigure}[b]{0.43\textwidth}
        \centering
        \includegraphics[width=\textwidth]{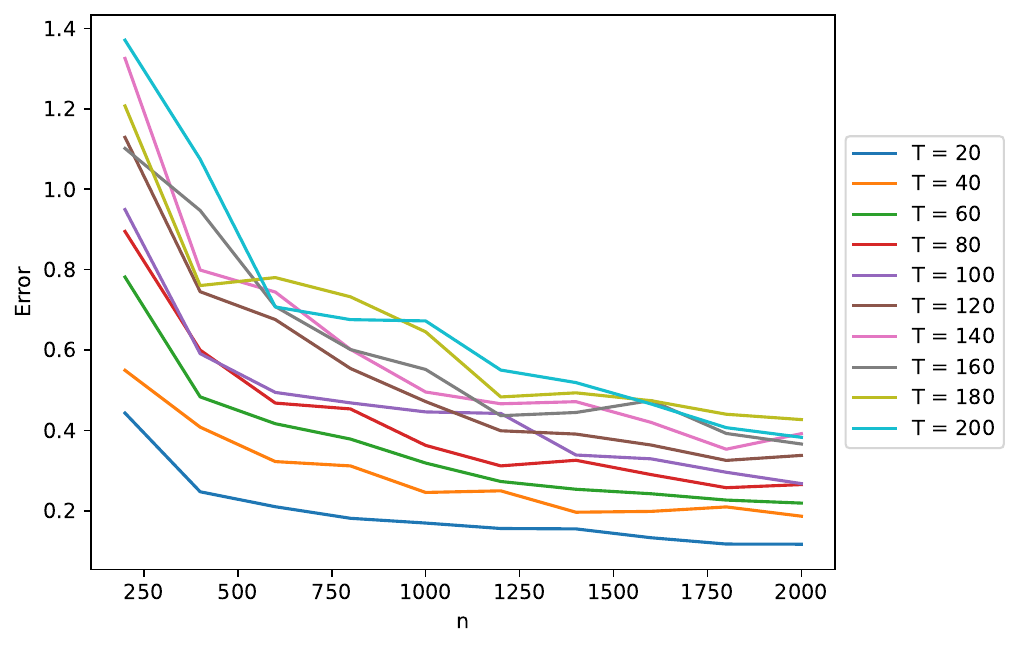}
        % \caption{$K$ is selected by LOOCV}
        % \label{fig:five over x}
    \end{subfigure}
       \caption{Simulation results for $ |\hat{\nu}(\pi) - \nu(\pi)| $ when the target policy $\pi$ is (c). See detailed description in Figure \ref{fig:policya}.}
       \label{fig:policyc}
\end{figure}

\begin{figure}[h]
    \centering
    \begin{subfigure}[b]{0.45\textwidth}
        \centering
        \includegraphics[width=\textwidth]{policyc_K_n.pdf}
        % \caption{$K = 3n^{1/5}$}
        % \label{fig:y equals x}
    \end{subfigure}
    \hfill
    \begin{subfigure}[b]{0.45\textwidth}
        \centering
        \includegraphics[width=\textwidth]{policyc_K_T.pdf}
        % \caption{$K = 3n^{1/5}$}
        % \label{fig:three sin x}
    \end{subfigure}
    % \hfill
    % \begin{subfigure}[b]{0.43\textwidth}
    %     \centering
    %     \includegraphics[width=\textwidth]{policyc_CV_n.pdf}
    %     % \caption{$K$ is selected by LOOCV}
    %     % \label{fig:five over x}
    % \end{subfigure}
    % \hfill
    % \begin{subfigure}[b]{0.43\textwidth}
    %     \centering
    %     \includegraphics[width=\textwidth]{policyc_CV_T.pdf}
    %     % \caption{$K$ is selected by LOOCV}
    %     % \label{fig:five over x}
    % \end{subfigure}
       \caption{Simulation results for $ |\hat{\nu}(\pi) - \nu(\pi)| $ when the target policy $\pi$ is (c). See detailed description in Figure \ref{fig:policya_more}.}
       \label{fig:policyc_more}
\end{figure}

\section{Detailed Theorem Statements}
\label{sec:thm_more}
\subsection{Parametric Setting}
\begin{thm}
    \label{thm:para_bound_more}
   Under Assumptions %\ref{ass: Markovian}-\ref{ass: DGP}, 
   \ref{ass:realizability}-\ref{ass:basis}, %\ray{need to fix for the whole paper, as now the assumptions are also indexed by sections}, 
    we have
   \begin{align}
    |\hat{\nu}(\pi) - \nu(\pi)| 
    = \bigO \left\{ \sqrt{\frac{ T^3 }{n} \kappa } + T \sum_{t=1}^T \left[ \left(  1 + \sqrt{\frac{{K \log n \log T} }{n}} \right)^t -1 \right]  \sqrt{\frac{{\log n \log T} }{n}}   \right\}, \text{ W.H.P.} \label{eqn: para_polynomial_more}
   \end{align}
%    \ray{use the notation $\kappa$ to simplify this term}
%    where $\|f\|_{\calL_2} = \sqrt{\EE f^2(S_t, A_t)}$ for some function $f$ defined on $\calS \times \calA$. 

   If we further assume that 
   \begin{align}
    % \label{eqn:para_T_condition}
    T = \smallO([n/(\log n \log T)]^{1/2}),   \nonumber
   \end{align}
   we have 
   \begin{align}
   |\hat{\nu}(\pi) - \nu(\pi)| 
    = &\bigO \left( \sqrt{\frac{ T^3 }{n}\kappa}  
 + T^{3} \frac{\log n \log T}{n} \right), \text{ W.H.P}.  \nonumber % \label{eqn: para_comp}.
   \end{align}
%    where \begin{align}
%     \label{eqn:kappa}
%     \kappa :=  \frac{1}{T}\sum_{t=1}^T \sup_{f \in \calQ^{(t)}} \frac{[\mathcal{E}^\pi_t f]^2}{\|f\|^2_{\calL_2}}.
%  \end{align}
\end{thm}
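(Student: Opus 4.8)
The plan is to leverage the marginal-importance-sampling representation \eqref{eqn:MIS}--\eqref{eqn:weights} already established, which recasts the value error purely in terms of per-step Bellman residuals. I would first prove the telescoping identity
\[
\hat{\nu}(\pi) - \nu(\pi) = \calE^\pi_1\!\left[\hat{Q}^\pi_1 - Q^\pi_1\right] = \sum_{t=1}^T \frac{1}{n}\sum_{i=1}^n \hat{w}_{i,t}\,\eta_{i,t},
\]
where $\eta_{i,t} = R_{i,t} + \sum_{a'}\pi_t(a'\mid S_{i,t+1})Q^\pi_{t+1}(S_{i,t+1},a') - Q^\pi_t(S_{i,t},A_{i,t})$ is the one-step Bellman noise, satisfying $\EE[\eta_{i,t}\mid S_{i,t},A_{i,t}]=0$. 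This is obtained by unrolling the empirical recursion $\hat{Q}^\pi_t = \hat{\calP}^\pi_t\hat{Q}^\pi_{t+1} + (\text{empirical reward projection})$ against the exact one $Q^\pi_t = \calP^\pi_t Q^\pi_{t+1} + r_t$. The key point is that Assumption \ref{ass:realizability} (with $K$ fixed) forces $\Pi_t r_t = r_t$ and $\Pi_t \calP^\pi_t Q^\pi_{t+1} = \calP^\pi_t Q^\pi_{t+1}$, so every projection/approximation error vanishes and the residual collapses exactly to the centered noise $\eta_{i,t}$ propagated forward by the estimated operators.

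Next I would introduce the oracle weight $\bar{w}_t := \Pi_t w^\pi_t$ and split
\[
\hat{\nu}(\pi)-\nu(\pi) = \underbrace{\sum_{t=1}^T \tfrac1n\sum_{i=1}^n \bar{w}_t(S_{i,t},A_{i,t})\,\eta_{i,t}}_{E_1} + \underbrace{\sum_{t=1}^T \tfrac1n\sum_{i=1}^n \bigl(\hat{w}_{i,t}-\bar{w}_t(S_{i,t},A_{i,t})\bigr)\eta_{i,t}}_{E_2}.
\]
The identification $\bar{w}_t=\Pi_t w^\pi_t$ follows from the adjoint relation $\EE[\bar{w}_t f]=\calE^\pi_t[f]=\EE[w^\pi_t f]$ for all $f\in\calQ^{(t)}$, which in turn yields the clean identity $\sum_t\EE[\bar{w}_t^2]=T\kappa$ directly from definition \eqref{eqn:kappa}, since the supremum there is attained at $f=\bar{w}_t$.

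For $E_1$ I would rewrite it as $\tfrac1n\sum_i \xi_i$ with $\xi_i=\sum_t \bar{w}_t(S_{i,t},A_{i,t})\eta_{i,t}$, which are i.i.d.\ and mean zero across $i$. The main structural step (Lemma \ref{lem:decompoeision}) is that all cross terms in $\Var(\xi_i)$ vanish, because $\eta_{i,s}$ is conditionally centered given the filtration up to step $s$; hence $\Var(\xi_i)=\sum_t\EE[\bar{w}_t^2\sigma_t^2]$ with $\sigma_t^2$ the conditional variance of $\eta_{i,t}$. Since $|\eta_{i,t}|\lesssim T$ gives $\|\sigma_t^2\|_{\infty}\lesssim T^2$, this yields $\Var(\xi_i)\lesssim T^2\sum_t\EE[\bar{w}_t^2]=T^3\kappa$, and Chebyshev's inequality delivers $E_1=\bigO(\sqrt{T^3\kappa/n})$ W.H.P.\ with no logarithmic cost. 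This variance-decomposition is precisely where the improvement from the $T^2$ dependence of \citet{duan2020minimax} to $T^{3/2}$ is won.

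The hard part is $E_2$, the weight-estimation error. Here the plan is to control, uniformly in $t$, the perturbations $\|\hat{\Sigma}_t-\Sigma_t\|$ and $\|\hat{\calP}^\pi_t-\calP^\pi_t\|$ in the $\Sigma$-weighted norm by $\epsilon:=\sqrt{K\log n\log T/n}$ through matrix Bernstein inequalities, the $\log T$ factor absorbing a union bound over the $T$ steps and Assumption \ref{ass:basis}(ii)--(iii) supplying the uniform boundedness and H{\"o}lder control of the basis needed for this uniformity. Expanding $\hat{w}_{i,t}$ through the product $\prod_{t'=0}^{t-1}\hat{\calP}^\pi_{t'}$ and telescoping it against $\prod_{t'}\calP^\pi_{t'}$, the accumulated error is $\lesssim (1+\epsilon)^t-1$, since each empirical factor has operator norm at most $1+\epsilon$ while the population operators are contractive; combined with $|\eta_{i,t}|\lesssim T$ and one further concentration of $\tfrac1n\sum_i(\cdot)\eta_{i,t}$ this produces the general bound $T\sum_{t=1}^T[(1+\epsilon)^t-1]\sqrt{\log n\log T/n}$ of \eqref{eqn: para_polynomial_more}. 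I expect the main obstacle to be making this product-perturbation bound rigorous and uniform in $t$ despite the statistical dependence between $\hat{w}_{i,t}$ and the noise $\eta_{i,t}$ (the weights use the entire sample); I would handle this by working on the high-probability event on which all $\hat{\Sigma}_t$ are invertible and $\epsilon$-close to $\Sigma_t$, on which the empirical operator products are \emph{deterministically} near-contractive. Finally, under $T=\smallO([n/(\log n\log T)]^{1/2})$ with fixed $K$, one has $\epsilon\asymp\sqrt{\log n\log T/n}$ and $T\epsilon=\smallO(1)$, so $(1+\epsilon)^t-1\lesssim t\epsilon$ uniformly and $\sum_t t\lesssim T^2$, collapsing $E_2$ to $\bigO(T^3\log n\log T/n)$ and giving the stated simplified bound.
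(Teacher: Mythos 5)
Your proposal is correct and follows essentially the same route as the paper: your oracle-weight split with $\bar{w}_t=\Pi_t w_t^\pi$ is exactly the paper's $E_1$/$E_2$ decomposition read through the MIS representation (the paper's $\calE_1^\pi\{(\prod_{t'}\calP_{t'}^\pi)\tilde{\Pi}_t[\cdot]\}$ term is literally $\tfrac1n\sum_i \bar{w}_t(S_{i,t},A_{i,t})(\cdot)_i$, and $E_3$ vanishes under exact realizability with fixed $K$), your conditional-variance cancellation is the paper's Lemma \ref{lem:decompoeision}, and your identification $\sum_t\EE[\bar w_t^2]=T\kappa$ and the product-perturbation telescoping $(1+\epsilon)^t-1$ for the weight-estimation error match the paper's treatment of term $(i)$ of $E_2$. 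The only cosmetic difference is that you organize the argument in the dual (weight) language while the paper works in operator language, which are equivalent by the paper's own Section on the MIS connection.
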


\subsection{Nonparametric Setting}

\begin{thm}
    \label{thm:slow_more}
   Under Assumptions %\ref{ass: Markovian}-\ref{ass: DGP}, 
   \ref{ass:realizability}-\ref{ass:basis}, \ref{ass:covering}-\ref{ass:bias}, if $K = \smallO(\sqrt{n/(\log n \log T)})$, then we have
   \begin{multline}
    |\hat{\nu}(\pi) - \nu(\pi)| 
    = \bigOp \left\{ T^2 K^{-\beta_Q} + \sqrt{\frac{T^{3}}{n} \kappa}  \right. \\
    \left.  + T \sum_{t=1}^T \left[ \left(  1 + K^{-\beta_Q} + \sqrt{\frac{{K \log n \log T} }{n}} \right)^t -1 \right] \left( K^{-\beta_Q} + \sqrt{\frac{{K \log n \log T} }{n}} \right)   \right\}. \nonumber % \label{eqn: slow1_more}
   \end{multline}
   If we further assume that 
   \begin{align}
    \label{eqn:T_condition}
    % T = \smallO(K^{\beta_Q} + [n/(\log n \log T)]^{1/2} K^{-1/2}), 
    K = \smallO\{n/(T^2\log n \log T)\}, \  T = \smallO(K^{\beta_Q}) 
   \end{align}
%    \ZL{it is a bit confusing when we have two conditions on $K$ and $T$}
   we have 
   \begin{align}
   |\hat{\nu}(\pi) - \nu(\pi)| 
    = &\bigOp \left( 
    T^2 K^{-\beta_Q} + \sqrt{\frac{T^{3}}{n}\kappa}  +  \frac{T^{3}K \log n \log T}{n} \right). \nonumber %\label{eqn:slow_more}
   \end{align}
%    where $\|f\|_{\calL_2} = \sqrt{\EE f^2(S_t, A_t)}$ for some function $f$ defined on $\calS \times \calA$. 
   %To ensure the existence of $K$, we require $$T \log T = \smallO\left(\left(\frac{n}{\log n}\right)^{\frac{\beta_Q}{1+2\beta_Q}}\right).$$
\end{thm}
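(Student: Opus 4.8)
The plan is to prove Theorem~\ref{thm:slow} (equivalently \ref{thm:slow_more}) by first establishing the general non-asymptotic bound \eqref{eqn: para_polynomial_more}-style decomposition and then specializing under the stated growth conditions on $K$ and $T$. Following the error decomposition that the paper signals in Section~\ref{sec:proof}, I would write $\hat{\nu}(\pi)-\nu(\pi)$ as a sum of three conceptually distinct pieces: a \emph{first-order (linear) term} E1, which after using the MIS representation \eqref{eqn:MIS}-\eqref{eqn:weights} takes the form of a sum over $t$ of empirical averages $\frac1n\sum_i w_t^\pi(S_{i,t},A_{i,t})\varepsilon_{i,t}$ of mean-zero Bellman residuals; a \emph{bias-induced term} E3 coming from the projection error $Q_t^\pi - \Pi_t Q_t^\pi$ of the true $Q$-functions onto the sieve space; and a \emph{higher-order remainder} capturing the propagation of statistical estimation error through the backward recursion \eqref{eqn:Qest}. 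The key recursive identity is that the weight estimate involves the product operator $\prod_{t'=0}^{t-1}\hat{\calP}^\pi_{t'}$, so errors at each step compound multiplicatively; this is exactly why the raw bound in \eqref{thm:slow_more} contains the geometric-type factor $\sum_{t=1}^T[(1+K^{-\beta_Q}+\sqrt{K\log n\log T/n})^t-1]$.

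First I would bound E1. Using the conditional-variance decomposition highlighted in the discussion after Theorem~\ref{thm:para_bound} (``the variance of the first order term can be decomposed as a sum of $T$ individual expectations of the conditional variance,'' cf.\ Lemma~\ref{lem:decompoeision}), the variance of the aggregate linear term is $O(T\kappa/n)$ per unit but the martingale/cross-term structure across the $T$ horizons yields the $\sqrt{T^3\kappa/n}$ order after accounting for the $O(T)$ scaling inside the weights; a Bernstein- or Freedman-type concentration inequality then gives the high-probability control. The boundedness $|R_t|\le1$ and Assumption~\ref{ass:basis} (bounded densities, eigenvalue control via $\zeta_{K,t}=O(K)$) are what keep the relevant moments under control. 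Next I would handle E3: here I invoke Assumption~\ref{ass:bias}, $\sup_{q}\|q-\Pi_t q\|_\infty\lesssim K^{-\beta_Q}$, together with Assumption~\ref{ass:realizability} (completeness), to show each per-step projection bias is $O(K^{-\beta_Q})$, and then argue the biases accumulate to $O(T^2 K^{-\beta_Q})$ through the backward propagation (one factor of $T$ from summing over horizons, another from the propagation of each bias forward in the value sum).

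The main obstacle—and the technically heaviest step—is controlling the higher-order remainder, i.e.\ showing that the compounded estimation error, governed by $\hat\Sigma_t^{-1}$ versus $\Sigma_t^{-1}$ and by the empirical versus population operators $\hat{\calP}^\pi_t$ versus $\calP^\pi_t$, is uniformly small. The plan here is: (i) establish a uniform matrix concentration bound $\|\hat\Sigma_t^{-1/2}\Sigma_t^{1/2}-I\|=O_p(\sqrt{K\log n\log T/n})$ uniformly over $t$ (a union bound over $T$ time points combined with a matrix Bernstein inequality, using $\zeta_{K,t}=O(K)$ from Assumption~\ref{ass:basis}(ii) and the $K^\omega$ H\"older continuity of the basis from Assumption~\ref{ass:basis}(iii) to obtain the uniform-in-$(s,a)$ control); (ii) feed this per-step operator-norm error of order $\delta:=K^{-\beta_Q}+\sqrt{K\log n\log T/n}$ into the telescoping product, yielding the factor $(1+\delta)^t-1$; and (iii) sum over $t$. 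Under the assumed conditions $K=\smallO(\min\{\sqrt{n/(\log n\log T)},\,n/(T^2\log n\log T)\})$ and $T=\smallO(K^{\beta_Q})$, one has $T\delta=\smallO(1)$, so $(1+\delta)^t-1\lesssim t\delta$ for all $t\le T$, which collapses the geometric sum to a polynomial $\sum_{t=1}^T t\delta\asymp T^2\delta$. Multiplying by the outer factor $T\delta$ and separating the two parts of $\delta$ then produces the clean higher-order term $T^3 K\log n\log T/n$ (from the variance part) plus terms dominated by the already-accounted bias; combining E1, E3, and this remainder gives \eqref{eqn:slow}. The delicate point throughout is that all concentration statements must be made uniform over the $T$ horizons simultaneously (hence the $\log T$ factors), and that the linearization $(1+\delta)^t-1\approx t\delta$ is only valid—and the geometric blow-up avoided—precisely because the horizon condition \eqref{eqn:T_requirement} forces $T\delta$ to vanish.
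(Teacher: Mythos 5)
Your plan follows essentially the same route as the paper's proof: the same three-part decomposition into a mean-zero first-order term (bounded via the iterated law-of-total-variance in Lemma~\ref{lem:decompoeision} to get $\sqrt{T^3\kappa/n}$), a projection-bias term accumulating to $T^2K^{-\beta_Q}$, and a compounding remainder controlled by uniform matrix/operator concentration feeding the telescoping factor $(1+\delta)^t-1$ with $\delta=K^{-\beta_Q}+\sqrt{K\log n\log T/n}$, linearized to $t\delta$ under the growth conditions on $K$ and $T$. This matches the paper's argument in Sections~\ref{sec:E1}--\ref{sec:E3} and its supporting lemmas, so the proposal is correct and not a genuinely different approach.
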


\begin{thm}
    \label{thm:fast_more}
    Under Assumptions %\ref{ass: Markovian}, \ref{ass: reward}, \ref{ass: DGP}, 
    \ref{ass:realizability}, \ref{ass:basis}, \ref{ass:covering}, \ref{ass:bias} and \ref{ass:basis_w}, $K = \smallO(\sqrt{n/(\log n \log T)})$,  we have 
       \begin{align*}
   & |\hat{\nu}(\pi) - \nu(\pi)| 
    = \bigO \left\{ \frac{T}{\sqrt{n}}   +  K^{-\beta_w}\left[ T^2 K^{-\beta_Q} + \sqrt{\frac{T^{3}}{n}}  \right] \right.  %\label{eqn:comp2}
   \\
   & \left.  + T \sum_{t=1}^T \left[ \left(  1 + K^{-\beta_Q} + \sqrt{\frac{{K \log n \log T} }{n}} \right)^t -1 \right]   \left( K^{-\beta_Q} + \sqrt{\frac{{K \log n \log T} }{n}} \right)   \right\}, \text{ W.H.P}. %\label{eqn:comp3}
   \end{align*}
   If we further assume \eqref{eqn:T_condition},
%    \begin{align}
%     % \label{eqn:T_condition}
%     T = \smallO(K^{\beta_Q} + [n/(\log n \log T)]^{1/2} K^{-1/2}),  \nonumber
%    \end{align}
   we have 
        \begin{multline}
            |\hat{\nu}(\pi) - \nu(\pi)| 
            = \bigO \left( \frac{T}{\sqrt{n}}  +   T^2 K^{-\beta_Q-\beta_w} + T^3  K^{-2\beta_Q}  + T^3 K^{-\beta_Q} \sqrt{\frac{{K \log n \log T} }{n}}    + T^{3} \frac{K \log n \log T}{n}  \right), \text{ W.H.P}.  \nonumber %\label{eqn:fast} 
           \end{multline} 
\end{thm}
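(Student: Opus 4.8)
The plan is to prove Theorem~\ref{thm:fast} (equivalently the refined statement in Theorem~\ref{thm:fast_more}) by building directly on the error decomposition introduced for the slower-rate result (Theorem~\ref{thm:slow}), and then showing that the realizability of the ratio functions (Assumption~\ref{ass:basis_w}) allows us to sharpen the leading term from order $T^{3/2}/\sqrt n$ down to $T/\sqrt n$. First I would write $\hat\nu(\pi)-\nu(\pi)$ using the MIS representation~\eqref{eqn:MIS}--\eqref{eqn:weights}, so that the estimator becomes $\sum_{t=1}^T\frac1n\sum_i \hat w_{i,t}R_{i,t}$. I would decompose the total error into (i) a \emph{first-order/oracle term} E1 (cf.~\eqref{eqn:def_E1}) in which the estimated weights $\hat w_{i,t}$ are replaced by their population projections (or the true $w^\pi_t$ under Assumption~\ref{ass:basis_w}), (ii) a \emph{bias term} E3 (cf.~\eqref{eqn:def_E3}) capturing the sieve approximation error of the $Q$-functions, and (iii) the remaining higher-order cross terms coming from the difference $\hat w_{i,t}-w^\pi_t$ propagated through the backward recursion of $\prod_{t'} \hat{\mathcal P}^\pi_{t'}$.

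The key structural step is analyzing E1. Here I would argue that, because $w^\pi_t$ lies in (or is well-approximated by) the sieve span by Assumption~\ref{ass:basis_w}, the martingale/telescoping structure of the Bellman recursion lets us write E1 as a single empirical average of a centered quantity whose variance decomposes across time as a sum of $T$ conditional variances—this is exactly the mechanism already invoked via Lemma~\ref{lem:decompoeision} to obtain the $T^{3/2}$ term in the non-realizable case. The crucial improvement is that, with realizability of $w^\pi_t$, the summand can be written as $\sum_{t=1}^T w^\pi_t(S_{i,t},A_{i,t})R_{i,t}$ minus its mean, so its variance is $O(T)$ rather than $O(T^2)$ or $O(T^3)$; a Bernstein/Freedman-type concentration inequality then yields the $T/\sqrt n$ rate for E1, matching the tabular bound of \citet{yin2020asymptotically}. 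For the bias term E3 I would use Assumption~\ref{ass:bias} together with the realizability of $w^\pi_t$ to get a product of two projection errors, contributing the $T^2 K^{-\beta_Q-\beta_w}$ term, which is the payoff of the ``double-rate robustness'' noted in the text: the $Q$-approximation error and the $w$-approximation error multiply rather than add.

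For the remaining higher-order terms I would control $\hat{\mathcal P}^\pi_{t'}-\mathcal P^\pi_{t'}$ uniformly using Assumption~\ref{ass:basis}(ii)--(iii) (the eigenvalue bound on $\Sigma_{t,a}$, the $\zeta_{K,t}=\bigO(K)$ bound, and the H\"older continuity of the features that yields uniform convergence), giving per-step operator errors of order $K^{-\beta_Q}+\sqrt{K\log n\log T/n}$. Propagating these through the telescoping product $\prod_{t'=0}^{t-1}$ produces the factor $[(1+K^{-\beta_Q}+\sqrt{K\log n\log T/n})^t-1]$ that appears in Theorem~\ref{thm:fast_more}; under the imposed conditions $T=\smallO(K^{\beta_Q})$ and $K=\smallO(n/(T^2\log n\log T))$ this geometric sum stays bounded, and linearizing $(1+x)^t-1\lesssim tx$ collapses the tail into the displayed terms $T^3K^{-2\beta_Q}$, $T^3K^{-\beta_Q}\sqrt{K\log n\log T/n}$, and $T^3K\log n\log T/n$. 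Finally I would combine E1, E3, and the higher-order terms via the triangle inequality to obtain~\eqref{eqn:fast}. The main obstacle I anticipate is the E1 variance analysis: making rigorous the claim that realizability of $w^\pi_t$ lets the leading martingale-difference variance be bounded by $O(T)$ uniformly in $t$—rather than accumulating an extra factor of $T$ from the backward recursion—requires carefully isolating the realizable part of $\hat w_{i,t}$ from the estimation error and showing that the latter contributes only to the higher-order terms, so that the sharp linear-in-$T$ constant survives in the first-order term.
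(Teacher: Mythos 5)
Your proposal follows essentially the same route as the paper: the same $E_1/E_2/E_3$ decomposition, the same law-of-total-variance argument (Lemma \ref{lem:decompoeision} combined with Lemma 3.4 of \citet{yin2020asymptotically}) that exploits realizability of $w_t^\pi$ to reduce the first-order variance, the same product-of-projection-errors bound $T^2K^{-\beta_Q-\beta_w}$ for the bias term, and the same uniform operator-perturbation propagation yielding the $[(1+\xi)^t-1]$ factors. One small correction: the relevant variance of the oracle term is controlled by $\Var^\pi\left(\sum_{t=1}^T R_t\right)\le T^2$ per episode (not $O(T)$), and it is this $T^2/n$ bound that produces the claimed $T/\sqrt{n}$ rate.
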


\section{Proof of Main Theorems}
\label{sec:proof}
In this section, we provide the proof for Theorem \ref{thm:slow} and \ref{thm:fast}. The proof for theoretical results in the parametric setting can be derived as a special case by taking $\beta_Q$ and $\beta_w$ to be infinity. 

First of all, we recall and introduce some notations. 
Let  
$\Sigma_t = \EE [\phi_{K}(S_t, A_t) \phi_{K}(S_t, A_t)^\tp]\in \mathbb{R}^{K}$, 
$\hat{\Sigma}_t = \frac{1}{n}\sum_{i=1}^n [\phi_{K}(S_{i,t}, A_{i,t}) \phi_{K}(S_{i,t}, A_{i,t})^\tp]\in \mathbb{R}^{K}$ and $\Sigma_{t,a} = \EE [\psi_{K}(S_t) \psi_{K}(S_t)^\tp\mid A_t = a]$. And we denote $\mathcal{D}_t$ as the collection of historical data up to time step $t$, i.e., $\mathcal{D}_t = \{S_1, A_1, R_1, \dots, S_{t-1}, A_{t-1}, R_{t-1}, S_t, A_t\}$. Write  $\langle \pi, Q \rangle  (\cdot)  = \sum_{a\in \calA}  \pi(a\mid \cdot) Q(\cdot, a)$.

Define $\mathcal{P}_t$ and  $\hat {\mathcal{P}}_t^\pi $ as the population and estimated conditional expectation operators respectively, such that 
% $$\hat{\Sigma}_{t,a} = \frac{1}{\sum_{i=1}^n \mathbbm{1}(A_{i,t}=a)}\sum_{i=1}^n [\psi_K(S_{i,t}) \psi^\tp_K(S_{i,t})\mathbbm{1}(A_{i,t}=a) ]$$
% And we take
\begin{align*}
    (\mathcal{P}_t^\pi  f)(s,a) = \EE\left\{ \sum_{a'} \pi_t(a'\mid S_{t+1}) f(S_{t+1}, a') \mid S_t = s, A_t = a\right\}, \\
    (\hat {\mathcal{P}}_t^\pi  f)(s,a) %= \hat{\EE}\left\{ \sum_{a'} \pi(a'\mid S_{h+1}) f(S_{h+1}, a') \mid S_h = s, A_h = a\right\}\\
    = \phi_{K}(s,a)^\tp (\hat{\Sigma}_t)^{-1} \left( \frac{1}{n}\sum_{i=1}^n \phi_{K}(S_{i,t}, A_{i,t}) \left[ \sum_{a'} \pi_t(a'\mid S_{i,h+1}) f(S_{i,t+1}, a') \right]\right),
\end{align*}
for $f \in \calQ^{(t+1)}$, $t=1,\dots, T$.  In addition, we define ${\Pi}_t$ and  $\hat {{\Pi}}_t^\pi $ as the population and estimated projection operators respectively, such that 
\begin{align*}
    \Pi_t g (s,a) =   \phi_{K}(s,a)^\tp ({\Sigma}_t)^{-1} \EE \left[ \phi_{K}(S_{i,t}, A_{i,t}) g(S_{i,t}, A_{i,t})\right], \\
    (\hat \Pi_t  g)(s,a) %= \hat{\EE}\left\{  f(S_{h}, A_h) \mid S_h = s, A_h = a\right\}\\
    = \phi_{K}(s,a)^\tp (\hat{\Sigma}_t)^{-1} \left( \frac{1}{n}\sum_{i=1}^n \phi_{K}(S_{i,t}, A_{i,t}) g(S_{i,t}, A_{i,t})\right), \\
     (\tilde \Pi_t  g)(s,a) 
    = \phi_{K}(s,a)^\tp ({\Sigma}_t)^{-1} \left( \frac{1}{n}\sum_{i=1}^n \phi_K(S_{i,h}, A_{i,h}) g(S_{i,h}, A_{i,h})\right)
\end{align*}
for $g \in \calQ^{(t)}$, $t=1,\dots, T$. 

Then we have the following decomposition
\begin{align*}
    \nu(\pi) - \hat{\nu}(\pi) =  & \mathcal{E}_1\{ Q_1^\pi - \hat{Q}_1^\pi\} \\
     = & \mathcal{E}_1\{  Q_1^\pi - (\hat{\Pi}_1 R_1 + \hat{\calP_1^\pi}Q_2^\pi) + \hat{\calP_1^\pi}(Q_2^\pi - \hat Q_2^\pi) \} \\
    = &  \mathcal{E}_1\{ [ Q_1^\pi - (\hat{\Pi}_1 R_1 + \hat{\calP_1^\pi}Q_2^\pi) ] + \hat{\calP_1^\pi}[Q_2^\pi - (\hat \Pi_tR_2 + \hat{\calP_2^\pi} Q_3^\pi)] + \hat{\calP_1^\pi}\hat{\calP_2^\pi}(Q_3^\pi - \hat Q_3^\pi)\}\\
    = & \cdots \\
    = &  \mathcal{E}_1\{ [ Q_1^\pi - (\hat{\Pi}_1 R_1 + \hat{\calP_1^\pi}Q_2^\pi) ] + \hat{\calP_1^\pi}[Q_2^\pi - (\hat \Pi_tR_2 + \hat{\calP_2^\pi} Q_3^\pi)]  + \cdots + \hat{\calP_1^\pi} \cdots \hat{\calP_{T-1}^\pi}[Q_T^\pi - \hat{\calP_T^\pi}R_T]\}\\
    % = & [ \hat{\Pi}_1Q_1  - \hat{\calP_1^\pi}(R_1 + Q_2^\pi) + ( Q_1 - \hat{\Pi}_1Q_1) ] \\
    % & + \hat{\calP_1^\pi}[\hat{\Pi}_2Q_2 - \hat{\calP_2^\pi}(R_2 + Q_3^\pi)+ ( Q_2 - \hat{\Pi}_2Q_2)]  + \cdots \\
    % & + \hat{\calP_1^\pi} \cdots \hat{\calP_{T-1}^\pi}[\hat{\Pi}_TQ_T - \hat{\calP_T^\pi}R_T + ( Q_T - \hat{\Pi}_TQ_T)] \\
    = & \mathcal{E}_1\{E_1\} +  \mathcal{E}_1\{E_2\} + \mathcal{E}_1\{E_3\},
\end{align*}
where 
\begin{align}
    E_1 &= \sum_{t=1}^T \left(\prod_{t'=0}^{t-1} \mathcal{P}_{t'}^\pi\right) \tilde{\Pi}_t \left[ Q^\pi_t - (R_t + \langle \pi_t, Q_{t+1}^\pi \rangle) \right], \label{eqn:def_E1} \\ 
     E_2 &= \sum_{t=1}^T \left( \left[ \prod_{t'=0}^{t-1} \hat{\mathcal{P}}_{t'}^\pi \right] \hat{\Pi}_t  - \left[ \prod_{t'=0}^{t-1} \mathcal{P}_{t'}^\pi \right] \tilde{\Pi}_t\right)  \left[ Q^\pi_t - (R_t + \langle \pi_t, Q_{t+1}^\pi \rangle) \right], \label{eqn:def_E2} \\ 
     E_3 &= \sum_{t=1}^T \left( \prod_{t'=0}^{t-1} \hat{\mathcal{P}}_{t'}^\pi\right) \left[ Q_t - \hat{\Pi}_t Q_t\right],\label{eqn:def_E3}
\end{align}
In the following, we focus on these three terms one by one. 

\subsection{Bounding $\mathcal{E}_1\{E_1\}$}
\label{sec:E1}
Note that 
\begin{align*}
    \EE  ( \mathcal{E}^\pi_1 [E_1]) & = \sum_{t=1}^T \EE \left(  \calE_1 \left\{ \left(\prod_{t'=0}^{t-1} \mathcal{P}_{t'}^\pi\right) \tilde{\Pi}_t \left[ Q^\pi_t - (R_t + \langle \pi_t, Q_{t+1}^\pi \rangle)  \right] \right\} \right) \\
   & =  \sum_{t=1}^T \EE \left[ \EE \left(  \calE_1 \left\{ \left(\prod_{t'=0}^{t-1} \mathcal{P}_{t'}^\pi\right) \tilde{\Pi}_t \left[ Q^\pi_t - (R_t + \langle \pi_t, Q_{t+1}^\pi \rangle)  \right] \right\}  \mid \calD_t \right) \right] = 0.
\end{align*}
Then it suffices to derive the bound for the variance of $\mathcal{E}_1\{E_1\}$.
\begin{align*}
    &\Var ( \mathcal{E}^\pi_1 [E_1(S_1, A_1)]) =  \Var \left( \mathcal{E}^\pi_1\left\{ \sum_{t=1}^T \left(\prod_{t'=0}^{t-1} \mathcal{P}_{t'}^\pi\right) \tilde{\Pi}_t \left[ Q^\pi_t - (R_t + \langle \pi_t, Q_{t+1}^\pi \rangle) \right]\right\} \right) \\
    &= \sum_{t=1}^T \EE \left\{ \Var \left(   \mathcal{E}^\pi_1 \left\{ \left(\prod_{t'=0}^{t-1} \mathcal{P}_{t'}^\pi\right) \tilde{\Pi}_t \left[ Q^\pi_t - (R_t + \langle \pi_t, Q_{t+1}^\pi \rangle) \right] \right\}  \mid \mathcal{D}_t\right) \right\} \\
   & = \sum_{t=1}^T \EE \left\{ \Var \left(   \mathcal{E}^\pi_t \left\{ \tilde{\Pi}_t \left[ Q^\pi_t - (R_t + \langle \pi_t, Q_{t+1}^\pi \rangle) \right] \right\}  \mid  \mathcal{D}_t\right) \right\} \\
    &=  \sum_{t=1}^T \EE \left\{ \Var \left(   [\mathcal{E}^\pi_t \phi_K]^\tp  { {\Sigma}_t}^{-1} \left( \frac{1}{n}\sum_{i=1}^n \phi_K(S_{i,t}, A_{i,t}) \left[ Q^\pi_{t}(S_{i,t}, A_{i,t}) - (R_{i,t} + \langle \pi_t, Q_{t+1}^\pi \rangle (S_{i,t+1})) \right]  \right) \mid  \mathcal{D}_t\right) \right\}  \cdots \cdots (i)
   \end{align*}
The first inequality is due to Lemma \ref{lem:decompoeision}.

Next we consider bounding $(i)$ under different conditions in Theorem \ref{thm:slow} and Theorem \ref{thm:fast}.

\begin{itemize}
    \item Under conditions in Theorem \ref{thm:slow}.
    \begin{align*}
        (i)  & =  \sum_{t=1}^T \EE \left\{ \Var \left(   [\mathcal{E}^\pi_t \phi_K]^\tp  { {\Sigma}_t}^{-1} \left( \frac{1}{n}\sum_{i=1}^n \phi_K(S_{i,t}, A_{i,t}) \left[ Q^\pi_{t}(S_{i,t}, A_{i,t}) - (R_{i,t} + \langle \pi_t, Q_{t+1}^\pi \rangle (S_{i,t+1})) \right]  \right) \mid  \mathcal{D}_t\right) \right\} \\
       & =  \sum_{t=1}^T \frac{1}{n} \EE \left\{   \Var \left(  [\mathcal{E}^\pi_t \phi_K]^\tp   { {\Sigma}_t}^{-1} \phi_K(S_{i,t}, A_{i,t})\left( \left[ Q^\pi_{t}(S_{i,t}, A_{i,t}) - (R_{i,t} + \langle \pi_t, Q_{t+1}^\pi \rangle (S_{i,t+1})) \right]  \right) \mid  \mathcal{D}_t\right) \right\} \\
       & \lesssim \sum_{t=1}^T \frac{1}{n} (T-t+1)^2 \EE \left\{  [\mathcal{E}^\pi_t \phi_K]^\tp   { {\Sigma}_t}^{-1} \phi_K(S_{i,t}, A_{i,t}) \phi^\tp_K(S_{i,t}, A_{i,t}) \Sigma_t^{-1} [\mathcal{E}^\pi_t \phi_K]
        \right\}\\
        & = \frac{1}{n} (T-t+1)^2  \sum_{t=1}^T  [\mathcal{E}^\pi_t \phi_K]^\tp  \Sigma_t^{-1} [\mathcal{E}^\pi_t \phi_K] \\
        & \leq \frac{T^3}{n} \kappa.
    \end{align*} 
    The second equality is due to the independence between different episodes and the last inequality is due to the definition of $\kappa$. 
    Therefore, we have 
    \begin{align*}
        \calE_1(E_1) = \bigOp\left( \sqrt{ \frac{T^3}{n} \kappa} \right).
    \end{align*}
    \item Under conditions in Theorem \ref{thm:fast}. 
    Take $\Delta_t(s,a,r, s') = Q_t^\pi(s,a) - [r + \langle \pi, Q_{t+1}^{\pi} \rangle (s')]. $

    \begin{align*}
        % \scriptsize{
      &   (i)  =  \sum_{t=1}^T \EE \left\{ \Var \left(   [\mathcal{E}^\pi_t \phi_K]^\tp  { {\Sigma}_t}^{-1} \left( \frac{1}{n}\sum_{i=1}^n \phi_K(S_{i,t}, A_{i,t}) \Delta_t(S_{i,t}, A_{i,t}, S_{i,t+1}, R_{i,t})  \right) \mid  \mathcal{D}_t\right) \right\} \\
           & =  \sum_{t=1}^T \frac{1}{n} \EE \left\{   \Var \left(   \left[\mathbb
   {E} \phi_K(S_t, A_t) \frac{\dist^\pi_t(S_t, A_t)}{\dist^b_t(S_t, A_t)}\right]^\tp  { {\Sigma}_t}^{-1} \phi_K(S_{i,t}, A_{i,t})\left( \Delta_t(S_{i,t}, A_{i,t}, S_{i,t+1}, R_{i,t})   \right) \mid  \mathcal{D}_t\right) \right\} \\
   &\leq  2\sum_{t=1}^T \frac{1}{n} \EE \left\{   \Var \left( \frac{\dist^\pi_t(S_{i,t}, A_{i,t})}{\dist^b_t(S_{i,t}, A_{i,t})}  \left( \Delta_t(S_{i,t}, A_{i,t}, S_{i,t+1}, R_{i,t})  \right) \mid S_{i,t}, A_{i,t} \right) \right\} \\
   &  +   \frac{2}{n} \sum_{t=1}^T \EE \left\{   \Var \left(\left[ \phi_K(S_{i,t}, A_{i,t})^\tp { {\Sigma}_t}^{-1} \mathbb
   {E} \phi_K(S_t, A_t) \frac{\dist^\pi_t(S_t, A_t)}{\dist^b_t(S_t, A_t)}- \frac{\dist^\pi_t(S_{i,t}, A_{i,t})}{\dist^b_t(S_{i,t}, A_{i,t})}\right]  \Delta_t(S_{i,t}, A_{i,t}, S_{i,t+1}, R_{i,t})  \mid  \mathcal{D}_t \right) \right\} \\
   % & \qquad 
   & \leq   2\sum_{t=1}^T \frac{1}{n}  \sup_{s,a} \frac{\dist^\pi_t(s,a)}{\dist^b_t(s,a)}  \EE \left\{ \frac{\dist^\pi_t(S_{i,t}, A_{i,t})}{\dist^b_t(S_{i,t}, A_{i,t})}   \Var \left( \left(   \Delta_t(S_{i,t}, A_{i,t}, S_{i,t+1}, R_{i,t}) \right) \mid S_{i,t}, A_{i,t} \right) \right\} \\
   &+   \frac{2}{n} \sum_{t=1}^T \EE \left\{   \Var \left(\left[ \phi_K(S_{i,t}, A_{i,t})^\tp { {\Sigma}_t}^{-1} \mathbb
   {E} \phi_K(S_t, A_t) \frac{\dist^\pi_t(S_t, A_t)}{\dist^b_t(S_t, A_t)}- \frac{\dist^\pi_t(S_{i,t}, A_{i,t})}{\dist^b_t(S_{i,t}, A_{i,t})}\right]  \Delta_t(S_{i,t}, A_{i,t}, S_{i,t+1}, R_{i,t})  \mid  \mathcal{D}_t \right) \right\} \\
   & \leq 2\sum_{t=1}^T \frac{1}{n}  \left[ \sup_{s,a} \frac{\dist^\pi_t(s,a)}{\dist^b_t(s,a)} \right] \EE^\pi \left\{  \Var \left( \left( \left[ Q^\pi_{t}(S_{i,t}, A_{i,t}) - (R_{i,t} + \langle \pi_t, Q_{t+1}^\pi \rangle (S_{i,t+1})) \right]  \right) \mid S_{i,t}, A_{i,t} \right) \right\} \cdots \cdots (ii) \\
   & +   \frac{2}{n} \sum_{t=1}^T \EE \left\{   \Var \left(\left[ \phi_K(S_{i,t}, A_{i,t})^\tp { {\Sigma}_t}^{-1} \mathbb
   {E} \phi_K(S_t, A_t) \frac{\dist^\pi_t(S_t, A_t)}{\dist^b_t(S_t, A_t)}- \frac{\dist^\pi_t(S_{i,t}, A_{i,t})}{\dist^b_t(S_{i,t}, A_{i,t})}\right]  \Delta_t(S_{i,t}, A_{i,t}, S_{i,t+1}, R_{i,t})  \mid  \mathcal{D}_t \right) \right\} \\
   & \qquad \qquad  \cdots\cdots (iii).
%    & \leq  2\left[ \sup_{s,a,t} \frac{d^\pi_t(s,a)}{d^b_t(s,a)} \right] \sum_{t=1}^T \frac{1}{n}  \EE^\pi \left\{  \Var \left( \left( \left[ Q^\pi_{t}(S_{i,t}, A_{i,t}) - (R_{i,t} + \langle \pi_t, Q_{t+1}^\pi \rangle (S_{i,t+1})) \right]  \right) \mid S_{i,t}, A_{i,t} \right) \right\} \\
%    & \qquad +   2\sum_{t=1}^T \frac{1}{n} \EE \left\{   \Var \left(\left[ \phi_K(S_{i,t}, A_{i,t})^\tp { {\Sigma}_t}^{-1} \mathbb
%    {E} \phi_K(S_t, A_t) \frac{d^\pi_t(S_t, A_t)}{d^b_t(S_t, A_t)}- \frac{d^\pi_t(S_{i,t}, A_{i,t})}{d^b_t(S_{i,t}, A_{i,t})}\right]   \left( \Delta_t(S_{i,t}, A_{i,t}, S_{i,t+1})  \right) \mid  \mathcal{D}_t \right) \right\} \\
%    & \lesssim 2 \left[ \sup_{s,a,t} \frac{d^\pi_t(s,a)}{d^b_t(s,a)} \right]  \frac{T^2}{n} \\
%     & \qquad  +   2\sum_{t=1}^T \frac{1}{n} \EE \left\{   \Var \left(\left[ \phi_K(S_{i,t}, A_{i,t})^\tp { {\Sigma}_t}^{-1} \mathbb
%    {E} \phi_K(S_t, A_t) \frac{d^\pi_t(S_t, A_t)}{d^b_t(S_t, A_t)}- \frac{d^\pi_t(S_{i,t}, A_{i,t})}{d^b_t(S_{i,t}, A_{i,t})}\right]   \left( \Delta_t(S_{i,t}, A_{i,t}, S_{i,t+1})  \right) \mid  \mathcal{D}_t \right) \right\}
% }
    \end{align*}
    
    The first equality is due to the independence among episodes and the fact that 
    $$\calE_t^\pi \phi_K = \mathbb
    {E} \phi_K(S_t, A_t) \frac{\dist^\pi_t(S_t, A_t)}{\dist^b_t(S_t, A_t)}.$$

By applying Lemma  3.4 in \citet{yin2020asymptotically}, %and Assumprion \ref{ass: reward}, 
we have 
\begin{align*}
    (ii) &  \leq \frac{2}{n} \left[ \sup_{s,a,t} \frac{\dist^\pi_t(s,a)}{\dist^b_t(s,a)} \right] \sum_{t=1}^T   \EE^\pi \left\{  \Var \left( \left( \left[ Q^\pi_{t}(S_{i,t}, A_{i,t}) - (R_{i,t} + \langle \pi_t, Q_{t+1}^\pi \rangle (S_{i,t+1})) \right]  \right) \mid S_{i,t}, A_{i,t} \right) \right\} \\
& \leq \frac{2}{n} \left[ \sup_{s,a,t} \frac{\dist^\pi_t(s,a)}{\dist^b_t(s,a)} \right]  \Var^\pi\left( \sum_{t=1}^T R_t \right) \leq \frac{2}{n} \left[ \sup_{s,a,t} \frac{\dist^\pi_t(s,a)}{\dist^b_t(s,a)} \right] T^2 \lesssim \frac{T^2}{n}. 
\end{align*}
As for $(iii)$, we have $|\Delta_t(S_{i,t}, A_{i,t}, S_{i,t+1}, R_{i,t}) |\lesssim (T-t+1)$ due to that $|R_t|\leq 1$, and we obtain 
\begin{align*}
   (iii) %& \sum_{t=1}^T \frac{1}{n} \EE \left\{   \Var \left(\left[ \phi_K(S_{i,t}, A_{i,t})^\tp { {\Sigma}_t}^{-1} \mathbb
%    {E} \phi_K(S_t, A_t) \frac{d^\pi_t(S_t, A_t)}{d^b_t(S_t, A_t)}- \frac{d^\pi_t(S_{i,t}, A_{i,t})}{d^b_t(S_{i,t}, A_{i,t})}\right]   \left( \Delta_t(S_{i,t}, A_{i,t}, S_{i,t+1}, R_{i,t})  \right) \mid  \mathcal{D}_t \right) \right\}\\
   % \leq  & \sum_{t=1}^T \frac{1}{n} \left\| \frac{d^\pi_t}{d^b_t} - \Pi \left( \frac{d^\pi_t}{d^b_t}\right)\right\|_{\infty}^2 \EE \left\{   \Var \left( \Delta_t(S_{i,t}, A_{i,t}, S_{i,t+1})   \mid  \mathcal{D}_t \right) \right\}\\ 
   \lesssim & \sum_{t=1}^T \frac{1}{n} (T-t +2)^2 \EE \left[ \frac{\dist^\pi_t}{\dist^b_t} (S_t, A_t)- \Pi \left( \frac{\dist^\pi_t}{\dist^b_t}\right)(S_t, A_t)\right]^2 \\
   \leq& \frac{T^2}{n} \sum_{t=1}^T \left\| \frac{\dist^\pi_t}{\dist^b_t} - \Pi \left( \frac{\dist^\pi_t}{\dist^b_t}\right)\right\|_{\calL_2}^2  \leq \frac{T^3}{n} K^{-\beta_w}.
   \end{align*}   
Overall, we have 
\begin{align*}
    \calE_1(E_1) = \bigOp\left( \frac{T}{\sqrt{n}} + \sqrt{\frac{T^3 K^{-\beta_w}}{n}} \right).
\end{align*}
\end{itemize}

\subsection{Bounding $\calE_1(E_2)$}
\label{sec:E2}
\begin{align*}
    \mathcal{E}_1^\pi (E_2) &=   \sum_{t=1}^T  \mathcal{E}_1^\pi \left\{ \left( \left[ \prod_{t'=0}^{t-1} \hat{\mathcal{P}}_{t'}^\pi \right] \hat{\Pi}_t  - \left[ \prod_{t'=0}^{t-1} \mathcal{P}_{t'}^\pi \right] \tilde{\Pi}_t\right)  \left[ Q^\pi_t - (R_t + \langle \pi_t, Q_{t+1}^\pi \rangle) \right]  \right\} \\
   &  =   \sum_{t=1}^T  \mathcal{E}_1^\pi \left\{ \left(  \prod_{t'=0}^{t-1} \hat{\mathcal{P}}_{t'}^\pi    - \prod_{t'=0}^{t-1} \mathcal{P}_{t'}^\pi \right)  \hat{\Pi}_t\left[ Q^\pi_t - (R_t + \langle \pi_t, Q_{t+1}^\pi \rangle) \right]  \right\} \cdots \cdots (i) \\
   & \quad + \sum_{t=1}^T  \mathcal{E}_1^\pi \left\{ \prod_{t'=0}^{t-1} \mathcal{P}_{t'}^\pi \left( \hat{\Pi}_t  - \tilde{\Pi}_t \right)\left[ Q^\pi_t - (R_t + \langle \pi_t, Q_{t+1}^\pi \rangle) \right]  \right\} \cdots \cdots (ii)
  \end{align*}

  Let's first deal with term $(ii)$. For every $t$, we have
  \begin{align*}
      & \EE \left[  \mathcal{E}_1^\pi \left\{ \prod_{t'=0}^{t-1} \mathcal{P}_{t'}^\pi \left( \hat{\Pi}_t  - \tilde{\Pi}_t \right)\left[ Q^\pi_t - (R_t + \langle \pi_t, Q_{t+1}^\pi \rangle) \right]   \right\} \mid \mathcal{D}_t\right] = 0\\
      & \EE \left[  \mathcal{E}_1^\pi \left\{ \prod_{t'=0}^{t-1} \mathcal{P}_{t'}^\pi \left( \hat{\Pi}_t  - \tilde{\Pi}_t \right)\left[ Q^\pi_t - (R_t + \langle \pi_t, Q_{t+1}^\pi \rangle) \right]   \right\} \right] = 0
  \end{align*}
  Therefore, we consider the variance of 
  \begin{align*}
      \sum_{t=1}^T  \mathcal{E}_1^\pi \left\{ \prod_{t'=0}^{t-1} \mathcal{P}_{t'}^\pi \left( \hat{\Pi}_t  - \tilde{\Pi}_t \right)\left[ Q^\pi_t - (R_t + \langle \pi_t, Q_{t+1}^\pi \rangle) \right]  \right\}.
  \end{align*}
  Using a similar argument as in Lemma \ref{lem:decompoeision}, we can decompose the variance as 
  \begin{align*}
     & \Var\left[ \sum_{t=1}^T  \mathcal{E}_1^\pi \left\{ \prod_{t'=0}^{t-1} \mathcal{P}_{t'}^\pi \left( \hat{\Pi}_t  - \tilde{\Pi}_t \right)\left[ Q^\pi_t - (R_t + \langle \pi_t, Q_{t+1}^\pi \rangle) \right]  \right\}\right] \\
      = & \sum_{t=1}^T \EE \left\{ \Var\left[  \mathcal{E}_1^\pi \left\{ \prod_{t'=0}^{t-1} \mathcal{P}_{t'}^\pi \left( \hat{\Pi}_t  - \tilde{\Pi}_t \right)\left[ Q^\pi_t - (R_t + \langle \pi_t, Q_{t+1}^\pi \rangle) \right]  \right\}\mid \mathcal{D}_t \right] \right\} \\
      = & \sum_{t=1}^T \EE \left\{ \Var\left[  \mathcal{E}_t^\pi \left\{ \left( \hat{\Pi}_t  - \tilde{\Pi}_t \right)\left[ Q^\pi_t - (R_t + \langle \pi_t, Q_{t+1}^\pi \rangle) \right]  \right\}\mid \mathcal{D}_t \right] \right\} \\
      = & \sum_{t=1}^T \frac{1}{n} \EE \left\{ \left[ \EE \left(\phi_K(S_t, A_t) \frac{\dist^\pi_t(S_t, A_t)}{\dist^b_t(S_t, A_t)}  \right)^\tp\Sigma_t^{-1/2}\left(\Sigma_t^{1/2}\hat{\Sigma}_t^{-1}\Sigma_t^{1/2} - I_K  \right)  \Sigma_t^{-1/2} \phi_K(S_t, A_t)  \right]^2  \right. \\
      & \qquad \left. \Var\left(  Q^\pi_t(S_{t}, A_{t}) - (R_t + \langle \pi_t, Q_{t+1}^\pi(S_{t+1}) \rangle)  \mid \mathcal{D}_t \right) \right\} \\ 
      \leq & \sum_{t=1}^T \frac{(T-t+2)^2}{n} \EE \left\{ \left\| \EE \left(\phi_K(S_t, A_t) \frac{\dist^\pi_t(S_t, A_t)}{\dist^b_t(S_t, A_t)}  \right)\Sigma_t^{-1/2}\right\|^2_2  \left\| \Sigma_t^{1/2}\hat{\Sigma}_t^{-1}\Sigma_t^{1/2} - I_K\right\|^2_2 \left\|   \Sigma_t^{-1/2} \phi_K(S_t, A_t) \right\|^2_2\right\} \\
      \leq  & \sum_{t=1}^T \frac{(T-t+2)^2}{n} [\mathcal{E}_t^\pi \phi_K]^\tp \Sigma_t^{-1}  [\mathcal{E}_t^\pi \phi_K] \left(\frac{\zeta_K^2 \log n}{n} \right) \zeta_K^2 % = \bigOp \left( \frac{T^3 \zeta_K^4 \log n}{n^2}\right).
  \end{align*}
  The last inequality is by applying Lemma \ref{lem:matrix} and the definition of $\zeta_K$.
  Therefore, we obtain 
  \begin{align*}
    (ii) = \bigO \left( \sqrt{\frac{T^3 \zeta_K^4 \log n}{n^2} } \right), \text{ W.H.P}.
  \end{align*}

Next, we derive the bound for term $(i)$. 

First of all, notice that %$$\|\mathcal{P}^\pi_t \|_{\pi} \leq 1, \forall t.$$
$$\|\mathcal{P}^\pi_t f \|_{\infty} = \sup_{s,a} \EE [f^\pi(S') \mid S=s, A= a] \leq \sup_{s,a} \EE [ |f^\pi(S')| \mid S=s, A= a] \leq \sup_{s'} f^\pi(s') \leq \sup_{(s',a')}f(s',a'). $$
Therefore
$$\|\mathcal{P}^\pi_t \|_{\infty} \leq 1. $$

\begin{align*}
    & \mathcal{E}_1^\pi \left\{ \left( \prod_{t'=0}^{t-1} \hat{\mathcal{P}}_{t'}^\pi - \prod_{t'=0}^{t-1} \mathcal{P}_{t'}^\pi\right)\hat{\Pi}_t \left[ Q^\pi_t - (R_t + \langle \pi_t, Q_{t+1}^\pi \rangle) \right] \right\} \\
     = & \mathcal{E}_1^\pi \left\{ \left( \prod_{t'=0}^{t-1} (\mathcal{P}_{t'}^\pi +  \hat{\mathcal{P}}_{t'}^\pi - \mathcal{P}_{t'}^\pi )- \prod_{t'=0}^{t-1} \mathcal{P}_{t'}^\pi\right) \hat{\Pi}_t \left[ Q^\pi_t - (R_t + \langle \pi_t, Q_{t+1}^\pi \rangle) \right] \right\} \\
      = & \mathcal{E}_1^\pi \left\{ \left( \sum_{(\delta_{t,0}, \dots, \delta_{t,t-1}) \in \{0,1\}^t\setminus\{0\}^t }  (\mathcal{P}_{0}^\pi)^{1-\delta_{t,0}} (\hat{\mathcal{P}}_{0}^\pi - \mathcal{P}_{0'}^\pi )^{\delta_{t,0}} \cdots  (\mathcal{P}_{t-1}^\pi)^{1-\delta_{t,t-1}} (\hat{\mathcal{P}}_{t-1}^\pi - \mathcal{P}_{t-1}^\pi )^{\delta_{t,t-1}} \right) \right. \\
      & \qquad \qquad \left. \hat{\Pi_t}  \left[ Q^\pi_t - (R_t + \langle \pi_t, Q_{t+1}^\pi \rangle) \right] \right\} \\
      \leq  &  \left\| \left( \sum_{(\delta_{t,0}, \dots, \delta_{t,t-1}) \in \{0,1\}^t\setminus\{0\}^t }  (\mathcal{P}_{0}^\pi)^{1-\delta_{t,0}} (\hat{\mathcal{P}}_{0}^\pi - \mathcal{P}_{0'}^\pi )^{\delta_{t,0}} \cdots  (\mathcal{P}_{t-1}^\pi)^{1-\delta_{t,t-1}} (\hat{\mathcal{P}}_{t-1}^\pi - \mathcal{P}_{t-1}^\pi )^{\delta_{t,t-1}} \right) \right. \\
      & \qquad \qquad \left. \hat{\Pi}_t \left[ Q^\pi_t - (R_t + \langle \pi_t, Q_{t+1}^\pi \rangle) \right] \right\|_{\infty} \\
      \leq & \sum_{(\delta_{t,0}, \dots, \delta_{t,t-1}) \in \{0,1\}^t\setminus\{0\}^t } \|\mathcal{P}_{0}^\pi\|_{\infty}^{1-\delta_{t,0}} \|\hat{\mathcal{P}}_{0}^\pi - \mathcal{P}_{0}^\pi \|_{\infty}^{\delta_{t,0}} \cdots  \|\mathcal{P}_{t-1}^\pi\|_{\infty}^{1-\delta_{t,t-1}} \|\hat{\mathcal{P}}_{t-1}^\pi - \mathcal{P}_{t-1}^\pi \|_{\infty}^{\delta_{t,t-1}} \\
     & \qquad \qquad \| \hat{\Pi}_t  \left[ Q^\pi_t - (R_t + \langle \pi_t, Q_{t+1}^\pi \rangle) \right] \|_{\infty}\\
     \leq & \left\{  \left(\|\mathcal{P}_{0}^\pi\|_{\infty} +  \|\hat{\mathcal{P}}_{0}^\pi - \mathcal{P}_{0}^\pi \|_{\infty} \right) \cdots \left(\|\mathcal{P}_{t-1}^\pi\|_{\infty} +  \|\hat{\mathcal{P}}_{t-1}^\pi - \mathcal{P}_{t-1}^\pi \|_{\infty} \right) - \|\mathcal{P}_{0}^\pi\|_{\infty}\cdots \|\mathcal{P}_{t-1}^\pi\|_{\infty}\right\}\\
      & \qquad \qquad \|  \hat{\Pi}_t \left[ Q^\pi_t - (R_t + \langle \pi_t, Q_{t+1}^\pi \rangle) \right] \|_{\infty}\\
       \leq & \left\{  \left(1 +  \|\hat{\mathcal{P}}_{0}^\pi - \mathcal{P}_{0}^\pi \|_{\infty} \right) \cdots \left(1 +  \|\hat{\mathcal{P}}_{t-1}^\pi - \mathcal{P}_{t-1}^\pi \|_{\infty} \right) - 1\right\}\|  \hat{\Pi}_t  \left[ Q^\pi_t - (R_t + \langle \pi_t, Q_{t+1}^\pi \rangle) \right] \|_{\infty}
 \end{align*}

 From the argument in Section \ref{sec:operator}, with probability at least $1- 4 T [(nK)^{-2} - c_1\exp\left\{\beta \log n - c_2\zeta_K^{-1} \sqrt{n} \right\} ]$, we have that for any $t=1,\dots, T$
 \begin{multline*}
     \|\hat{\mathcal{P}}_{t}^\pi - \mathcal{P}_{t}^\pi \|_{\infty} \\
     \lesssim \left( 1 + \frac{\zeta^2_K \sqrt{\log n \log K}}{\sqrt{n}}\right)  \sup_{h \in \mathcal{Q}^{(t)}(1)}  \| h - \Pi h\|_{\infty} + \frac{\zeta_K}{\sqrt{n}} +   K^{\frac{-1}{2}} \frac{\zeta^2_K \sqrt{\log n \log K}}{\sqrt{n}}+ \frac{\zeta_K^3 \log n \log K}{n}.
 \end{multline*}
 In addition, we have  
 \begin{multline*}
    \|  \hat{\Pi}_t \left[ Q^\pi_t - (R_t + \langle \pi_t, Q_{t+1}^\pi \rangle) \right] \|_{\infty} \\
    =\bigO\left(  \|Q^\pi_t - (R_t + \langle \pi_t, Q_{t+1}^\pi \rangle)\|_{\infty} \left[\frac{\zeta_K}{\sqrt{n}} +   K^{\frac{-1}{2}} \frac{\zeta^2_K \sqrt{\log n \log K}}{\sqrt{n}}+ \frac{\zeta_K^3 \log n \log K}{n} \right] \right), \text{ W.H.P}.
 \end{multline*}
 Denote 
 \begin{align*}
     \xi_{1,n,K} & = \left( 1 + \frac{\zeta^2_K \sqrt{\log n \log K}}{\sqrt{n}}\right)  \sup_{h \in \mathcal{Q}^{(t)}(1)}  \| h - \Pi h\|_{\infty} \\
     \xi_{2,n,K} & =  \frac{\zeta_K}{\sqrt{n}} +   K^{\frac{-1}{2}} \frac{\zeta^2_K \sqrt{\log n \log K}}{\sqrt{n}}+ \frac{\zeta_K^3 \log n \log K}{n}
 \end{align*}
\begin{itemize}
    \item Without further condition on $T$, we bound $\calE_1(E_2)$ by 
    \begin{align*}
        \calE_1(E_2) = \bigO\left( \sum_{t=1}^T  \left\{\left[ (\xi_{1,n,K} + \xi_{2,n,K}) + 1 \right]^{t} -1 \right\} \xi_{2,n,K} \right), \text{ W.H.P}.
    \end{align*}
    \item 
    Under the condition \eqref{eqn:T_requirement},  we have  
    \begin{align*}
        \xi_{1, n,K} + \xi_{2,n,K} < 1/T,
    \end{align*}
and therefore 
   \begin{align*}
        \left\{  \left(1 +  \|\hat{\mathcal{P}}_{0}^\pi - \mathcal{P}_{0}^\pi \|_{\infty} \right) \cdots \left(1 +  \|\hat{\mathcal{P}}_{t-1}^\pi - \mathcal{P}_{t-1}^\pi \|_{\infty} \right) - 1\right\} \\
       \lesssim t (\xi_{1,n,K} + \xi_{2,n,K}).
   \end{align*}
   Then the term $E_2$ is bounded by 
   \begin{multline*}
       \mathcal{E}_1^\pi(E_2) \lesssim \sum_{t=1}^T \left\{ t (\xi_{1,n,K} + \xi_{2,n, K}) \| Q^\pi_t - (R_t + \langle \pi_t, Q_{t+1}^\pi \rangle)\|_{\infty}\xi_{2,n,K}  \right\} \\
       + \sum_{t=1}^T  \sup_{s,a} \frac{\dist^\pi_t(s,a)}{\dist^b_t(s,a)}\frac{\zeta_K^2 \log K \log n}{n} \left\|  Q^\pi_t - (R_t + \langle \pi_t, Q_{t+1}^\pi\rangle)  \right\|_{\infty}\\
       \lesssim T^2 (\xi_{1,n,K} + \xi_{2,n, K}) \xi_{2,n,K}\left[ \sup_{t= 1,\dots,T}\| Q^\pi_t - (R_t + \langle \pi_t, Q_{t+1}^\pi \rangle)\|_{\infty}\right]  ,
   \end{multline*}
   with probability at least $1- 4 T [(nK)^{-2} - c_1\exp\left\{\beta \log n - c_2\zeta_K^{-1} \sqrt{n} \right\} ]$.  
   And we have 
   \begin{align*}
    \mathcal{E}_1^\pi(E_2)  = \bigO\left( T^3 (\xi_{1,n,K} + \xi_{2,n, K}) \xi_{2,n,K} \right), \text{ W.H.P}.
   \end{align*}.
\end{itemize}

\subsection{Bounding $\calE_1(E_3)$}
\label{sec:E3}
\begin{align}
   \calE_1( E_3) = & \calE_1 \left\{\sum_{t=1}^T \left( \prod_{t'=0}^{t-1} \hat{\mathcal{P}}_{t'}^\pi\right) \left[ Q_t - \hat{\Pi}_t Q_t\right]   \right\} \nonumber \\
    = & \calE_1 \left\{\sum_{t=1}^T \left( \prod_{t'=0}^{t-1} {\mathcal{P}}_{t'}^\pi\right) \left[ Q_t - \hat{\Pi}_t Q_t\right]\right\} + \calE_1 \left\{\sum_{t=1}^T \left( \prod_{t'=0}^{t-1} \hat{\mathcal{P}}_{t'}^\pi - \prod_{t'=0}^{t-1} {\mathcal{P}}_{t'}^\pi \right) \left[ Q_t - \hat{\Pi}_t Q_t\right] \right\} \label{eqn:bias_decomp}
    \end{align}
    For the first component in \eqref{eqn:bias_decomp}, 
    \begin{align*}
       \mathcal{E}_1^\pi \left\{ \sum_{t=1}^T \left( \prod_{t'=0}^{t-1} {\mathcal{P}}_{t'}^\pi\right) \left[ Q_t - \hat{\Pi}_t Q_t\right] \right\} = \sum_{t=1}^T  \mathcal{E}_t^\pi \left[ Q_t - \hat{\Pi}_t Q_t\right] 
    \end{align*}
    
    \begin{align*}
     \left| \sum_{t=1}^T  \mathcal{E}_t^\pi \left[ Q_t - \hat{\Pi}_t Q_t\right] \right| 
        =  & \left| \sum_{t=1}^T  \mathcal{E}_t^\pi \left[ \Pi Q_t - \hat{\Pi}_t Q_t + Q_t - \Pi Q_t\right] \right| \\
        \leq & \left| \sum_{t=1}^T  \mathcal{E}_t^\pi \left[ \Pi Q_t - \hat{\Pi}_t Q_t\right] \right|  + \left| \sum_{t=1}^T  \mathcal{E}_t^\pi \left[ Q_t - \Pi Q_t\right] \right|  \\
        \leq & (i) + (ii)
    %     \leq & \left| \sum_{t=1}^T  \mathcal{E}_t^\pi \phi_K(\cdot, \cdot)^\tp \right|  + \left| \sum_{t=1}^T  \mathcal{E}_t^\pi \left[ Q_t - \Pi Q_t\right] \right|  \\
    %    = & \bigOp\left( \sum_{t=1}^T \left\{  \frac{(\phi_K^\pi)^\tp \Sigma^{-1} \phi_K^\pi}{\sqrt{n}} \| Q_t - \Pi Q_t\|_{\infty}  +  \left| \sum_{t=1}^T  \mathcal{E}_t^\pi \left[ Q_t - \Pi Q_t\right] \right| \right\}\right)
    \end{align*}
    % where $\phi_K^\pi = \EE^\pi[\phi_K(S_t, A_t)]$. The last equality is derived by using a similar argument in Lemma \ref{lem:point}.

    For term $(i)$, note that 
    \begin{align*}
        \Pi_t Q_t (s,a) = \phi_K(s,a)^\tp C_{t,K}
    \end{align*}
   where  $C_{t, K} = (\Sigma_t)^{-1} \EE \phi_K(S_t, A_t) Q^\pi_t(S_t, A_t) $, and 
   \begin{align*}
    &\EE \phi_K(S_{i,t}, A_{i,t}) \left[  Q_{t}^\pi(S_{i,t}, A_{i,t}) - \phi_K^\tp(S_{i,t}, A_{i,t})^\tp C_{t,K}  \right] \\
    = & \EE \phi_K(S_{i,t}, A_{i,t}) \left[  Q_{t}^\pi(S_{i,t}, A_{i,t}) - \phi_K^\tp(S_{i,t}, A_{i,t})^\tp  (\Sigma_t)^{-1} \EE \phi_K(S_t, A_t) Q^\pi_t(S_t, A_t) \right]\\
    = & \EE \phi_K(S_{i,t}, A_{i,t}) \left[  Q_{t}^\pi(S_{i,t}, A_{i,t})\right] - \phi_K(S_{t}, A_{t}) \left[  Q_{t}^\pi(S_{t}, A_{t}) \right] = 0.
   \end{align*}
   we have
    % \begin{align*}
    %     \sum_{t=1}^T \left[ \sup_{s,a} \frac{d^\pi_t(s,a)}{d^b_t(s,a)}\right] \frac{1}{\sqrt{n}} \| Q_t - \Pi Q_t\|_{\infty} 
    % \end{align*}
\begin{align*}
    & \left| \sum_{t=1}^T  \mathcal{E}_t^\pi \left[ \Pi Q_t - \hat{\Pi}_t Q_t\right] \right| \\
    \leq & \left| \sum_{t=1}^T \mathcal{E}_t^\pi \left[ \phi_K^\tp(\cdot, \cdot) (\hat \Sigma_t)^{-1} \left\{ \frac{1}{n} \sum_{i=1}^n \phi_K(S_{i,t}, A_{i,t})Q_t^\pi(S_{i,t}, A_{i,t})  \right\}  - \Pi_t Q_t^\pi \right] \right| \\
    % \leq & \sum_{t=1}^T \left\|  \phi_K^\tp(\cdot, \cdot) (\hat \Sigma_t)^{-1} \left\{ \frac{1}{n} \sum_{i=1}^n \phi_K(S_{i,t}, A_{i,t})Q_t^\pi(S_{i,t}, A_{i,t})  \right\}  - \Pi_t Q_t^\pi  \right\|
    =  & \left| \sum_{t=1}^T \mathcal{E}_t^\pi \left[ \phi_K^\tp(\cdot, \cdot) (\hat \Sigma_t)^{-1} \left\{ \frac{1}{n} \sum_{i=1}^n \phi_K(S_{i,t}, A_{i,t}) [Q_t^\pi(S_{i,t}, A_{i,t})- \phi_K^\tp (S_{i,t}, A_{i,t}) C_{t,K}]   \right. \right. \right. \\
    &\qquad \left. \left. \left.  - \EE \phi_K(S_{i,t}, A_{i,t}) \left[  Q_{t}^\pi(S_{i,t}, A_{i,t}) - \phi_K^\tp(S_{i,t}, A_{i,t})^\tp C_{t,K}  \right]  \right\} \right] \right| \\
    =  & \left| \sum_{t=1}^T \mathcal{E}_t^\pi \left[ \phi_K^\tp(\cdot, \cdot) (\hat \Sigma_t)^{-1} \left\{ \frac{1}{n} \sum_{i=1}^n \phi_K(S_{i,t}, A_{i,t}) [Q_t^\pi(S_{i,t}, A_{i,t})- \Pi_tQ_t^\pi(S_{i,t}, A_{i,t})]   \right. \right. \right. \\
    &\qquad \left. \left. \left.  - \EE \phi_K(S_{i,t}, A_{i,t}) \left[  Q_{t}^\pi(S_{i,t}, A_{i,t}) - \Pi_tQ_t^\pi(S_{i,t}, A_{i,t})  \right]  \right\} \right] \right|. 
\end{align*}
Using similiar argument in deriving the bound \eqref{eqn:bias_bound1}, replacing $\calP_t^\pi f$ with $Q_t^\pi - \Pi_t Q_t^\pi / \|Q_t^\pi - \Pi_t Q_t^\pi\|_{\infty}$, we have 
\begin{align*}
    & \left| \sum_{t=1}^T  \mathcal{E}_t^\pi \left[ \Pi Q_t - \hat{\Pi}_t Q_t\right] \right| \\
    \leq & \bigO\left( \sum_{t=1}^T \calE_t^\pi \left\| \calE_1\{\phi_K^\tp\}\Sigma_t^{-1/2} \right\|_2 \zeta_K \sqrt{\frac{\log n \log T}{n}} {\left\| Q_t^\pi - \Pi_t Q_{t}^\pi\right\|_{\infty}} \right) \\
    % = & \bigOp \left( \sum_{t=1}^T  \frac{ \calE_t^\pi \| \phi_K(\cdot, \cdot)^\tp \Sigma_t^{-1/2}\|_2\zeta_K \sqrt{\log n \log T}}{\sqrt{n}}   {\left\| Q_t^\pi - \Pi_t Q_{t}^\pi\right\|_{\infty}}  \right) \\
    = & \bigO\left( T^2 \kappa  \frac{\zeta_K \sqrt{\log n \log T}}{\sqrt{n}} K^{-\beta_Q}  \right), \text{ W.H.P}.
\end{align*}

    For term $(ii)$, we derive the bound under different conditions. 
    \begin{itemize}
        \item Under conditions in Theorem \ref{thm:slow}.
        \begin{align*}
            (ii) \leq \sum_{t=1}^T \|Q_t^\pi - \Pi_t Q_t^\pi\|_{\infty} \leq T^2 K^{-\beta_Q}.
        \end{align*}
        \item Under conditions in Theorem \ref{thm:fast}.
        
        First of all, note that 
        \begin{align*}
            \EE \left\{\phi_K(S_t, A_t)\left[ Q_t(S_t, A_t) - \Pi Q_t (S_t, A_t)\right]   \right\} = \bm 0
        \end{align*}
        due to the definition of $\Pi_t$. Then we have 
        \begin{align*}
           & \sum_{t=1}^T \mathcal{E}_t^\pi  \left[ Q^\pi_t - \Pi Q^\pi_t\right] \\
            = & \sum_{t=1}^T \EE \left\{ \frac{\dist^\pi_t(S_t,A_t)}{\dist^b_t(S_t,A_t)}  \left[ Q_t(S_t, A_t) - \Pi Q_t (S_t, A_t)\right]  \right\} \\
          = & \sum_{t=1}^T \EE \left\{ \Pi_t\left\{\frac{\dist^\pi_t}{\dist^b_t}  \right\}(S_t, A_t)  \left[ Q_t(S_t, A_t) - \Pi Q_t (S_t, A_t)\right]  \right\} \\
         &  + \sum_{t=1}^T \EE \left\{ \left( \frac{\dist^\pi_t(S_t,A_t)}{\dist^b_t(S_t,A_t)}-  \Pi_t\left\{\frac{\dist^\pi_t}{\dist^b_t}  \right\}(S_t, A_t) \right) \left[ Q_t(S_t, A_t) - \Pi Q_t (S_t, A_t)\right]  \right\} \\ 
         = &  0 + \sum_{t=1}^T \EE \left\{ \left( \frac{\dist^\pi_t(S_t,A_t)}{\dist^b_t(S_t,A_t)}-  \Pi_t\left\{\frac{\dist^\pi_t}{\dist^b_t}  \right\}(S_t, A_t) \right) \left[ Q_t(S_t, A_t) - \Pi Q_t (S_t, A_t)\right]  \right\} \\
         \leq & \sum_{t=1}^T \left\| \frac{\dist^\pi_t}{\dist^b_t} - \Pi_t \left[ \frac{\dist^\pi_t}{\dist^b_t}\right]\right\|_{\calL_2} \left\| Q_t - \Pi_t Q_t\right\|_{\calL_2}\\
         \leq & T^2 K^{-\beta_w} K^{-\beta_Q}.
        \end{align*}
        The last equality is due to the fact that $\Pi_t \{w_t\} (s,a) = \phi_K(s,a)^\tp \omega $ for some $\omega \in \mathbb{R}^K$.
    \end{itemize}

    For the second component in \eqref{eqn:bias_decomp}, using a similar idea as in bounding $E_2$, we have 
    \begin{align*}
       \sum_{t=1}^T \left( \prod_{t'=0}^{t-1} \hat{\mathcal{P}}_{t'}^\pi - \prod_{t'=0}^{t-1} {\mathcal{P}}_{t'}^\pi \right) \left[ Q_t - \hat{\calP_t} Q_t\right] \leq  \sum_{t=1}^T \bigO \left[ t(\xi_{1,n,K} + \xi_{2,n,K}) \right] \left\| Q_t - \hat{\calP_t} Q_t \right\|_{\infty} \\
       \leq \sum_{t=1}^T  \bigO\left( t (\xi_{1,n,K} + \xi_{2,n, K}) \left( 1 + \frac{\zeta^2_K \sqrt{\log n \log K}}{\sqrt{n}}\right)\| Q_t - \Pi Q_t\|_{\infty}\right) \\
        \leq \bigO\left\{ (\xi_{1,n,K} + \xi_{2,n, K}) \left( 1 + \frac{\zeta^2_K \sqrt{\log n \log K}}{\sqrt{n}}\right) \left( \sum_{t=1}^T t \| Q_t - \Pi Q_t\|_{\infty}\right) \right\} \\
        \leq \bigO\left( T^3 (\xi_{1,n,K} + \xi_{2,n, K}) K^{-\beta_Q} \right), \text{ W.H.P}.
    \end{align*}
    The last inequality is due to the condition of $\zeta_K$.

By combining results from Section \ref{sec:E1}, \ref{sec:E2} and \ref{sec:E3}, we obtain the bounds in Theorem \ref{thm:slow} and \ref{thm:fast}.

\subsection{Bounding $\|\hat{\mathcal{P}}_{t}^\pi - \mathcal{P}_{t}^\pi \|_{\infty}$}
\label{sec:operator}

% Take $\Pi h_0 = \argmin_{h \in \Phi_K} \| h - h_0 \|_{2}$.

\begin{align*}
   & [\hat{\mathcal{P}}_{t}^\pi] f (s,a) - [\mathcal{P}_t^\pi] f (s,a)\\
    = & \phi_K(s,a)^\tp (\hat{\Sigma})^{-1}\left\{ \frac{1}{n}\sum_{i=1}^n \phi_K(S_i, A_i)  \mathcal{P}_{t}^\pi f(S_i, A_i)\right\} - \Pi(\mathcal{P}_{t}^\pi f) (s,a) 
     + \Pi(\mathcal{P}_{t}^\pi f) (s,a) - (\mathcal{P}_{t}^\pi f) (s,a) \\
    & + \phi_K(s,a)^\tp (\hat{\Sigma})^{-1}\left\{ \frac{1}{n}\sum_{i=1}^n \phi_K(S_i, A_i)  \left[ f^\pi(S_i') -  \mathcal{P}_{t}^\pi f(S_i, A_i) \right]\right\} \\
    = & I + II
\end{align*}
 where $I$ indicates the bias term and $II$ indicates the variance term. For the following, we constrain $f \in \mathcal{Q}^{(t+1)}$ with $\|f\|_{\infty} \leq 1$. Here, we omit te subscirpt $t$ for $\Pi_t$ and $\Sigma_t$. 
\subsubsection{Bounding the bias term}
 We first look at the bias term.  From the definition of $\Pi h_0$, we know that 
 \begin{align*}
     \Pi h_0(s,a) = \phi_K(s,a)^\tp ({\Sigma})^{-1}\EE \left\{ \phi_K(S,A) h_0(S,A) \right\} 
 \end{align*}
 Take $C_{K,f} $ as 
\begin{align*}
   C_{K,f} =  ({\Sigma})^{-1}\EE \left\{ \phi_K(S,A) \mathcal{P}^\pi_tf(S,A) \right\} 
\end{align*} and Define
\begin{align*}
    \zeta_K = \sup_{s,a} \| \Sigma^{-1/2} \phi_K(s,a)\|_2.
\end{align*}
 \begin{align*}
     &\phi_K(s,a)^\tp (\hat{\Sigma})^{-1}\left\{ \frac{1}{n}\sum_{i=1}^n \phi_K(S_i, A_i)  \mathcal{P}_{t}^\pi f(S_i, A_i)\right\} - \Pi(\mathcal{P}_{t}^\pi f) (s,a) \\
     = & \phi_K(s,a)^\tp ({\Sigma})^{-1}\left\{ \frac{1}{n}\sum_{i=1}^n \phi_K(S_i, A_i)  \left( \mathcal{P}_{t}^\pi f(S_i, A_i) - \phi_k(S_i, A_i)^\tp C_{K,f} \right) \right. \\
     & \qquad \left. - \EE \left[ \phi_K(S, A)  \left( \mathcal{P}_{t}^\pi f(S, A) - \phi_k(S, A)^\tp C_{K,f}\right)\right]\right\}\\
     & + \phi_K(s,a)^\tp \Sigma^{-1/2} (\Sigma^{1/2} \hat{\Sigma}^{-1} \Sigma^{1/2} - I_K) \Sigma^{-1/2} \left\{ \frac{1}{n}\sum_{i=1}^n \phi_K(S_i, A_i)  \left( \mathcal{P}_{t}^\pi f(S_i, A_i) - \phi_k(S_i, A_i)^\tp C_{K,f} \right) \right. \\
     & \qquad \left. - \EE \left[ \phi_K(S, A)  \left( \mathcal{P}_{t}^\pi f(S, A) - \phi_k(S, A)^\tp C_{K,f}\right)\right]\right\}
 \end{align*}
Take $\mathcal{Q}^{(t+1)}(1) = \{ f\in \mathcal{Q}^{(t+1)}: \|f\|_{\infty} \leq 1 \}$, we have 
 \begin{align*}    
    &  \sup_{f \in \mathcal{Q}^{(t+1)}(1)}  \left\| \phi_K(\cdot, \cdot)^\tp (\hat{\Sigma})^{-1}\left\{ \frac{1}{n}\sum_{i=1}^n \phi_K(S_i, A_i)  \mathcal{P}_{t}^\pi f(S_i, A_i)\right\} - \Pi(\mathcal{P}_{t}^\pi f) \right\|_{\infty} \\
    \leq & \left( \sup_{s,a}\left\| \Sigma^{-1/2} \phi_K(s,a) \right\|_2  + \sup_{s,a}\left\| \Sigma^{-1/2} \phi_K(s,a) \right\|_2  \| \Sigma^{1/2} \hat{\Sigma}^{-1} \Sigma^{1/2} - I_K\| \right)\\
    &
   \sup_{f \in \mathcal{Q}^{(t+1)}(1)}   \left\| \Sigma^{\frac{-1}{2}}\left\{ \frac{1}{n}\sum_{i=1}^n \phi_K(S_i, A_i)  \left( \mathcal{P}_{t}^\pi f(S_i, A_i)  -  \phi_k(S_i, A_i)^\tp C_{K,f} \right) \right. \right. \\
  & \qquad  \qquad  \left. \left. - \EE \left[ \phi_K(S, A)  \left( \mathcal{P}_{t}^\pi f(S, A) - \phi_k(S, A)^\tp C_{K,f}\right)\right]\right\} \right\|_2  \\
    \leq & \zeta_K(1 + \| \Sigma^{1/2} \hat{\Sigma}^{-1} \Sigma^{1/2} - I_K\| )\\
    &
\sup_{ h \in \mathcal{Q}^{(t)}: \|h \|_{\infty} \leq  \sup_{h \in \mathcal{Q}^{(t)}}  \| h - \Pi h\|_{\infty}}   \left\| \Sigma^{\frac{-1}{2}}\left\{ \frac{1}{n}\sum_{i=1}^n \phi_K(S_i, A_i) h(S_i, A_i) - \EE \left[ \phi_K(S, A)  h (S,A)\right]\right\} \right\|_2 
 \end{align*}
 And it remains to bound $\| \Sigma^{1/2} \hat{\Sigma}^{-1} \Sigma^{1/2} - I_K\|$ and 
 \begin{align*}
   \sup_{ h \in \mathcal{Q}{(1)}}   \left\| \Sigma^{\frac{-1}{2}}\left\{ \frac{1}{n}\sum_{i=1}^n \phi_K(S_i, A_i) h(S_i, A_i)   - \EE \left[ \phi_K(S, A)  h (S,A)\right]\right\} \right\|_2.   
 \end{align*}
 Note that under Assumption \ref{ass:covering}, by Theorem 2.7.3 in \citet{van1996weak}, there exists a constant $B>0$ such that  
 $\calN(\mathcal{Q}^{(t)}(1), \|\cdot\|_{\infty}, \epsilon) \leq \exp(B \epsilon^{-d/(p)})$ for $t=1,\dots, T$.
 
 Therefore, by Lemma \ref{lem:l2bound} and \ref{lem:matrix}, with probability at least $1-2(nK)^{-2}$, we have 
\begin{align}
    &  \sup_{f \in \mathcal{Q}^{(t+1)}(1)}  \left\| \phi_K(\cdot, \cdot)^\tp (\hat{\Sigma})^{-1}\left\{ \frac{1}{n}\sum_{i=1}^n \phi_K(S_i, A_i)  \mathcal{P}_{t}^\pi f(S_i, A_i)\right\} - \Pi(\mathcal{P}_{t}^\pi f) \right\|_{\infty} \nonumber \\
    \lesssim &   \frac{\zeta^2_K \sqrt{\log n \log K}}{\sqrt{n}} \sup_{h \in \mathcal{Q}^{(t)}}  \| h - \Pi h\|_{\infty}. \label{eqn:bias_bound1}
\end{align}
And therefore with probability at least $1- 2(nK)^{-2}$, 
\begin{align*}
\sup_{f \in \mathcal{Q}^{(t+1)}}\sup_{s,a}\left|  \phi_K(s,a)^\tp (\hat{\Sigma})^{-1}\left\{ \frac{1}{n}\sum_{i=1}^n \phi_K(S_i, A_i)  \mathcal{P}_{t}^\pi f(S_i, A_i)\right\} - \Pi(\mathcal{P}_{t}^\pi f) (s,a) 
     + \Pi(\mathcal{P}_{t}^\pi f) (s,a) - (\mathcal{P}_{t}^\pi f) (s,a) \right| \\
     \lesssim \left( 1 + \frac{\zeta^2_K \sqrt{\log n \log K}}{\sqrt{n}}\right)  \sup_{h \in \mathcal{Q}^{(t)}(1)}  \| h - \Pi h\|_{\infty}.
\end{align*}
\subsubsection{Bounding the variance term}

\begin{align}
    II(s,a)  =  &\phi_K(s,a)^\tp (\hat{\Sigma})^{-1}\left\{ \frac{1}{n}\sum_{i=1}^n \phi_K(S_i, A_i)  \left[ f^\pi(S_i') -  \mathcal{P}_{t}^\pi f(S_i, A_i) \right]\right\} \nonumber \\
    = & \phi_K(s,a)^\tp {\Sigma}^{-1}\left\{ \frac{1}{n}\sum_{i=1}^n \phi_K(S_i, A_i)  \left[ f^\pi(S_i') -  \mathcal{P}_{t}^\pi f(S_i, A_i) \right]\right\}  \nonumber \\
    & \qquad + \phi_K(s,a)^\tp \Sigma^{-1/2} \left( \Sigma^{1/2} {\Sigma}^{-1} \Sigma^{1/2} - I_K \right) \Sigma^{-1/2} \left\{ \frac{1}{n}\sum_{i=1}^n \phi_K(S_i, A_i)  \left[ f^\pi(S_i') -  \mathcal{P}_{t}^\pi f(S_i, A_i) \right]\right\}.  \label{eqn:var_decomp}
\end{align}
By using the same argument in Lemma \ref{lem:l2bound}, we have with probability at least $1- (nK)^{-2}$, 
\begin{align*}
   \sup_{f \in \mathcal{Q}^{(t+1)}(1)} \left\| \Sigma^{-1/2} \left\{ \frac{1}{n}\sum_{i=1}^n \phi_K(S_i, A_i)  \left[ f^\pi(S_i') -  \mathcal{P}_{t}^\pi f(S_i, A_i) \right]\right\} \right\| \lesssim \frac{\zeta_K \sqrt{\log n \log K}}{\sqrt{n}}.
\end{align*}
Then the second term in \eqref{eqn:var_decomp} can be bounded by 
\begin{align*}
    &\sup_{s,a} \sup_{f \in \mathcal{Q}^{(t+1)}}\left| \phi_K(s,a)^\tp \Sigma^{-1/2} \left( \Sigma^{1/2} {\Sigma}^{-1} \Sigma^{1/2} - I_K \right) \Sigma^{-1/2} \left\{ \frac{1}{n}\sum_{i=1}^n \phi_K(S_i, A_i)  \left[ f^\pi(S_i') -  \mathcal{P}_{t}^\pi f(S_i, A_i) \right]\right\}\right| \\
    \leq &\sup_{s,a}\|\phi^\tp_K(s,a) \Sigma^{-1/2}\|_2 \left\|  \Sigma^{1/2} {\Sigma}^{-1} \Sigma^{1/2} - I_K\right\|  \sup_{f \in \mathcal{Q}^{(t+1)}(1)} \left\| \Sigma^{-1/2} \left\{ \frac{1}{n}\sum_{i=1}^n \phi_K(S_i, A_i)  \left[ f^\pi(S_i') -  \mathcal{P}_{t}^\pi f(S_i, A_i) \right]\right\} \right\| \\
    \leq &  \zeta_K \frac{\zeta_K \sqrt{\log n \log K}}{\sqrt{n}}\frac{\zeta_K \sqrt{\log n \log K}}{\sqrt{n}}
\end{align*}
with probability at least $1-2(nK)^{-2}$. For the following, we focus on bounding the first term in \eqref{eqn:var_decomp}.

Let $\mathcal{X}_n \subset \mathcal{S} \times \mathcal{A}$ be a grid of finitely many points such that for each $(s,a) \in \mathcal{S} \times \mathcal{A}$ there exists a $\overline{(s,a)}_n(s,a) \in \mathcal{X}_n$ such that $\| (s,a) - \overline{(s,a)}_n(s,a)\| \lesssim (\zeta_K K^{-(\omega + 1/2)})^{1/\omega'}$, where $\omega$ and $\omega'$ are the constants defined in Assumption \ref{ass:basis}. By compactness and convexity of the support of $(S,A)$, we may choose $\mathcal{X}_n$ to have cardinality $\#(\mathcal{X}_n) \lesssim n^{\beta}$ for some constant $0 < \beta < \infty$.
\begin{align*}
    &\sup_{s,a}\left| \phi_K(s,a)^\tp {\Sigma}^{-1}\left\{ \frac{1}{n}\sum_{i=1}^n \phi_K(S_i, A_i)  \left[ f^\pi(S_i') -  \mathcal{P}_{t}^\pi f(S_i, A_i) \right]\right\} 
 \right| \\
 \leq & \max_{(s_n, a_n) \in \mathcal{X}_n}\left| \phi_K(s_n,a_n)^\tp {\Sigma}^{-1}\left\{ \frac{1}{n}\sum_{i=1}^n \phi_K(S_i, A_i)  \left[ f^\pi(S_i') -  \mathcal{P}_{t}^\pi f(S_i, A_i) \right]\right\}  \right| \\
 & \qquad +  \sup_{s,a} \left| [\phi_K(s,a) - \phi_K(s_n,a_n)]^\tp {\Sigma}^{-1}\left\{ \frac{1}{n}\sum_{i=1}^n \phi_K(S_i, A_i)  \left[ f^\pi(S_i') -  \mathcal{P}_{t}^\pi f(S_i, A_i) \right]\right\}  \right|  \\
 \leq & \max_{(s_n, a_n) \in \mathcal{X}_n}\left| \phi_K(s_n,a_n)^\tp {\Sigma}^{-1}\left\{ \frac{1}{n}\sum_{i=1}^n \phi_K(S_i, A_i)  \left[ f^\pi(S_i') -  \mathcal{P}_{t}^\pi f(S_i, A_i) \right]\right\}  \right| \\
 & \qquad +  C_{\omega} K^{\omega} (\zeta_K K^{-(\omega + 1/2)}) \left\|\Sigma^{-1/2}  \frac{1}{n}\sum_{i=1}^n \phi_K(S_i, A_i)  \left[ f^\pi(S_i') -  \mathcal{P}_{t}^\pi f(S_i, A_i) \right] \right\|_2.
\end{align*}
The second term can be bounded by using the same argument as that in Lemma \ref{lem:l2bound}. We focus on the first term.

For any fixed $(s_n, a_n)$, from Lemma \ref{lem:point}, we have 
\begin{align*}
        \sup_{f \in \mathcal{Q}^{(t+1)}(1)}\left| \phi_K(s,a)^\tp \Sigma^{-1}\left\{\frac{1}{n}\sum_{i=1}^n \phi_K(S_i, A_i)  \left[ f^\pi(S_i') -  \mathcal{P}_{t}^\pi f(S_i, A_i) \right]\right\} \right|\lesssim \frac{\zeta_K }{\sqrt{n}}
    \end{align*}
with probability at least $1-c\exp\{-c^{-1}\zeta_K^{-1}\sqrt{n}\}$ for some universal constant $c>0$.
It follows by the union bound that 
\begin{align*}
    & \Pr \left( \max_{(s_n, a_n) \in \mathcal{X}_n} \left| \phi_K(s_n,a_n)^\tp \Sigma^{-1}\left\{\frac{1}{n}\sum_{i=1}^n \phi_K(S_i, A_i)  \left[ f^\pi(S_i') -  \mathcal{P}_{t}^\pi f(S_i, A_i) \right]\right\} \right| > \frac{\zeta_K }{\sqrt{n}} \right) \\
    \leq & \mathrm{card}(\mathcal{X}_n) \max_{(s_n, a_n) \in \mathcal{X}_n} \Pr \left(  \left| \phi_K(s_n,a_n)^\tp \Sigma^{-1}\left\{\frac{1}{n}\sum_{i=1}^n \phi_K(S_i, A_i)  \left[ f^\pi(S_i') -  \mathcal{P}_{t}^\pi f(S_i, A_i) \right]\right\} \right| > \frac{\zeta_K }{\sqrt{n}} \right)  \\
    \lesssim & c_1\exp\left\{\beta \log n - c_2\zeta_K^{-1} \sqrt{n} \right\},
\end{align*}
for some universal constants $c_1, c_2 >0$.
Therefore, 
\begin{multline*}
    \sup_{f \in \mathcal{Q}^{(t+1)}} \sup_{s,a} \left| \phi_K(s,a)^\tp (\hat{\Sigma})^{-1}\left\{ \frac{1}{n}\sum_{i=1}^n \phi_K(S_i, A_i)  \left[ f^\pi(S_i') -  \mathcal{P}_{t}^\pi f(S_i, A_i) \right]\right\}\right| \\
  \lesssim \frac{\zeta_K}{\sqrt{n}} +   K^{\frac{-1}{2}} \frac{\zeta^2_K \sqrt{\log n \log K}}{\sqrt{n}}+ \frac{\zeta_K^3 \log n \log K}{n} 
\end{multline*}
with probability at least $1 - 2(nK)^{-1} - c_1\exp\left\{\beta \log n - c_2\zeta_K^{-1} \sqrt{n} \right\}$.

\section{Additional Lemmas}
\begin{lemma}
\label{lem:decompoeision}
The variance of $\calE_1(E_1)$ can be decomposed as 
\begin{align*}
    \Var \left( \mathcal{E}^\pi_1\left\{ \sum_{t=1}^T \left(\prod_{t'=0}^{t-1} \mathcal{P}_{t'}^\pi\right) \tilde{\Pi}_t \left[ Q^\pi_t - (R_t + \langle \pi_t, Q_{t+1}^\pi \rangle) \right]\right\} \right) \\
= \sum_{t=1}^T \EE \left\{ \Var \left(   \mathcal{E}^\pi_1 \left\{ \left(\prod_{t'=0}^{t-1} \mathcal{P}_{t'}^\pi\right) \tilde{\Pi}_t \left[ Q^\pi_t - (R_t + \langle \pi_t, Q_{t+1}^\pi \rangle) \right] \right\}  \mid \mathcal{D}_t\right) \right\}
\end{align*}
\end{lemma}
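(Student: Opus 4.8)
The plan is to recognize the sum inside the variance as a martingale-difference sequence, and then combine the orthogonality of its increments with the law of total variance. Write
$X_t := \mathcal{E}_1^\pi\big\{\big(\prod_{t'=0}^{t-1}\mathcal{P}_{t'}^\pi\big)\tilde\Pi_t[Q_t^\pi-(R_t+\langle\pi_t,Q_{t+1}^\pi\rangle)]\big\}$,
so that the quantity of interest is $X=\sum_{t=1}^T X_t$ and the claimed right-hand side is $\sum_{t=1}^T\EE[\Var(X_t\mid\mathcal{D}_t)]$. The proof then reduces to two facts: (i) $\EE[X_t\mid\mathcal{D}_t]=0$ for every $t$, and (ii) the cross-covariances $\Cov(X_s,X_t)$ vanish for $s\neq t$.

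First I would make the dependence of $X_t$ on the data explicit. Using the flow (tower) property of the conditional-expectation operators, $\mathcal{E}_1^\pi\big(\prod_{t'=0}^{t-1}\mathcal{P}_{t'}^\pi\big)\phi_K=\mathcal{E}_t^\pi\phi_K$, and since $\Sigma_t$ is deterministic, $X_t$ is a fixed linear functional of an empirical average,
\[
X_t = [\mathcal{E}_t^\pi \phi_K]^\tp \Sigma_t^{-1}\Big(\tfrac{1}{n}\sum_{i=1}^n \phi_K(S_{i,t},A_{i,t})\,g_{i,t}\Big),\quad g_{i,t}:=Q_t^\pi(S_{i,t},A_{i,t})-\big(R_{i,t}+\langle\pi_t,Q_{t+1}^\pi\rangle(S_{i,t+1})\big).
\]
Conditioning on $\mathcal{D}_t$ fixes $(S_{i,t},A_{i,t})$ for all $i$, so $\EE[X_t\mid\mathcal{D}_t]=[\mathcal{E}_t^\pi\phi_K]^\tp\Sigma_t^{-1}\frac1n\sum_i\phi_K(S_{i,t},A_{i,t})\,\EE[g_{i,t}\mid\mathcal{D}_t]$. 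By the Markov property $\EE[g_{i,t}\mid\mathcal{D}_t]=\EE[g_{i,t}\mid S_{i,t},A_{i,t}]$, and the Bellman equation \eqref{eq: Bellman equation for Q} shows that the FQE residual $g_{i,t}$ has conditional mean zero given $(S_{i,t},A_{i,t})$. This establishes fact (i).

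Next I would set up the filtration $\mathcal{F}_t:=\sigma(\mathcal{D}_{t+1})$ and verify the martingale-difference structure. The increment $X_t$ uses only $\{(S_{i,t},A_{i,t},R_{i,t},S_{i,t+1})\}_i$, all contained in $\mathcal{D}_{t+1}$, so it is $\mathcal{F}_t$-measurable, while fact (i) reads $\EE[X_t\mid\mathcal{F}_{t-1}]=\EE[X_t\mid\mathcal{D}_t]=0$. For $s<t$, the term $X_s$ is $\mathcal{D}_t$-measurable, since $\mathcal{D}_t$ already records $R_{i,s}$ and $S_{i,s+1}$ whenever $s\le t-1$; hence $\EE[X_sX_t]=\EE[X_s\,\EE[X_t\mid\mathcal{D}_t]]=0$, and together with $\EE[X_t]=0$ this gives $\Cov(X_s,X_t)=0$. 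Therefore $\Var(X)=\sum_{t=1}^T\Var(X_t)$, which is fact (ii).

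Finally, applying the law of total variance to each increment gives $\Var(X_t)=\EE[\Var(X_t\mid\mathcal{D}_t)]+\Var(\EE[X_t\mid\mathcal{D}_t])$, and the second term vanishes by fact (i); summing over $t$ yields the stated identity. The only genuinely delicate step is the bookkeeping underlying the martingale-difference property: correctly isolating the ``fresh'' randomness $(R_{i,t},S_{i,t+1})$ injected at step $t$ relative to $\mathcal{D}_t$, verifying the adaptedness of $X_s$ to $\mathcal{D}_t$ for $s<t$, and invoking the Bellman equation to annihilate the conditional mean. Once these measurability and mean-zero properties are in place, the orthogonality of increments and the total-variance step are routine.
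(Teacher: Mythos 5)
Your proof is correct and rests on exactly the same facts as the paper's: each per-step term is a martingale difference with respect to $\mathcal{D}_t$ (conditionally mean-zero by the Bellman equation, with earlier terms $\mathcal{D}_t$-measurable). The only difference is organizational — you establish orthogonality of increments and then apply the law of total variance term by term, whereas the paper iterates the law of total variance backward from $t=T$ to peel off one term at a time; the two are equivalent.
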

\begin{proof}
    By  iteratively applying law of total variance, we have 
    \begin{align*}
       & \Var \left( \mathcal{E}^\pi_1\left\{ \sum_{t=1}^T \left(\prod_{t'=0}^{t-1} \mathcal{P}_{t'}^\pi\right) \tilde{\Pi}_t \left[ Q^\pi_t - (R_t + \langle \pi_t, Q_{t+1}^\pi \rangle) \right]\right\} \right) \\
        = & \EE \left( \Var \left(  \mathcal{E}^\pi_1\left\{ \sum_{t=1}^T \left(\prod_{t'=0}^{t-1} \mathcal{P}_{t'}^\pi\right) \tilde{\Pi}_t \left[ Q^\pi_t - (R_t + \langle \pi_t, Q_{t+1}^\pi \rangle) \right]\right\} \mid \calD_T\right) \right) \\
        & \qquad + \Var \left[ \EE \left(  \mathcal{E}^\pi_1\left\{ \sum_{t=1}^T \left(\prod_{t'=0}^{t-1} \mathcal{P}_{t'}^\pi\right) \tilde{\Pi}_t \left[ Q^\pi_t - (R_t + \langle \pi_t, Q_{t+1}^\pi \rangle) \right]\right\} \mid \calD_T \right) \right] \\
         = &  \EE \left( \Var \left(  \mathcal{E}^\pi_1\left\{ \left(\prod_{t'=0}^{T-1} \mathcal{P}_{t'}^\pi\right) \tilde{\Pi}_T \left[ Q^\pi_T - (R_T + \langle \pi_t, Q_{T+1}^\pi \rangle) \right]\right\} \mid \calD_T\right) \right) \\
         & \qquad + \Var \left[  \mathcal{E}^\pi_1\left\{ \sum_{t=1}^{T-1} \left(\prod_{t'=0}^{t-1} \mathcal{P}_{t'}^\pi\right) \tilde{\Pi}_t \left[ Q^\pi_t - (R_t + \langle \pi_t, Q_{t+1}^\pi \rangle) \right]\right\} \right] \\
         = &  \EE \left( \Var \left(  \mathcal{E}^\pi_1\left\{ \left(\prod_{t'=0}^{T-1} \mathcal{P}_{t'}^\pi\right) \tilde{\Pi}_T \left[ Q^\pi_T - (R_T + \langle \pi_t, Q_{T+1}^\pi \rangle) \right]\right\} \mid \calD_T\right) \right) \\
         & \qquad + \EE \left( \Var \left(  \mathcal{E}^\pi_1\left\{ \sum_{t=1}^{T-1} \left(\prod_{t'=0}^{t-1} \mathcal{P}_{t'}^\pi\right) \tilde{\Pi}_t \left[ Q^\pi_t - (R_t + \langle \pi_t, Q_{t+1}^\pi \rangle) \right]\right\} \mid \calD_{T-1}\right) \right) \\
         & \qquad + \Var \left[ \EE \left(  \mathcal{E}^\pi_1\left\{ \sum_{t=1}^{T-1} \left(\prod_{t'=0}^{t-1} \mathcal{P}_{t'}^\pi\right) \tilde{\Pi}_t \left[ Q^\pi_t - (R_t + \langle \pi_t, Q_{t+1}^\pi \rangle) \right]\right\} \mid \calD_{T-1} \right) \right] \\
         = & \EE \left( \Var \left(  \mathcal{E}^\pi_1\left\{ \left(\prod_{t'=0}^{T-1} \mathcal{P}_{t'}^\pi\right) \tilde{\Pi}_T \left[ Q^\pi_T - (R_T + \langle \pi_t, Q_{T+1}^\pi \rangle) \right]\right\} \mid \calD_T\right) \right) \\
        &  \qquad +\EE \left( \Var \left(  \mathcal{E}^\pi_1\left\{ \left(\prod_{t'=0}^{T-2} \mathcal{P}_{t'}^\pi\right) \tilde{\Pi}_{T-1} \left[ Q^\pi_{T-1} - (R_{T-1} + \langle \pi_t, Q_{T}^\pi \rangle) \right]\right\} \mid \calD_{T-1}\right) \right) \\
        & \qquad +  \Var \left[ \EE \left(  \mathcal{E}^\pi_1\left\{ \sum_{t=1}^{T-2} \left(\prod_{t'=0}^{t-1} \mathcal{P}_{t'}^\pi\right) \tilde{\Pi}_t \left[ Q^\pi_t - (R_t + \langle \pi_t, Q_{t+1}^\pi \rangle) \right]\right\} \mid \calD_{T-2} \right) \right] \\
        = & \cdots \\
        = &\sum_{t=1}^T \EE \left\{ \Var \left(   \mathcal{E}^\pi_1 \left\{ \left(\prod_{t'=0}^{t-1} \mathcal{P}_{t'}^\pi\right) \tilde{\Pi}_t \left[ Q^\pi_t - (R_t + \langle \pi_t, Q_{t+1}^\pi \rangle) \right] \right\}  \mid \mathcal{D}_t\right) \right\}.
    \end{align*}
\end{proof}

\begin{lemma}
    \label{lem:matrix}
    With probability at least $1- (nK)^{-2}$, we have
    \begin{align}
    \label{eqn:mat_con1}
        \left\|  \Sigma^{-1/2}  \hat{\Sigma}  \Sigma^{-1/2} - I_K \right\| \lesssim \frac{\zeta_K \sqrt{\log n \log K \log \zeta_K }}{\sqrt{n}} + \frac{\zeta_K^2\log n \log K \log \zeta_K }{n}.
    \end{align}
    If we further assume that $\zeta^2_K \log n \log K \log \zeta_K = \smallO(n) $,
        \begin{align}
        \label{eqn:mat_con2}
            \left\|  \Sigma^{1/2}  \hat{\Sigma}^{-1}  \Sigma^{1/2} - I_K \right\| \lesssim \frac{\zeta_K \sqrt{\log n \log K \log \zeta_K }}{\sqrt{n}} 
        \end{align}
    \end{lemma}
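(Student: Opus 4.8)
The plan is to reduce the claim to a concentration inequality for a sum of i.i.d.\ bounded random matrices and then invoke the matrix Bernstein inequality. Fix a time index $t$ and suppress it from the notation. Since the $n$ trajectories are i.i.d., the normalized feature vectors $X_i := \Sigma^{-1/2}\phi_K(S_{i,t}, A_{i,t})$, $i=1,\dots,n$, are i.i.d.\ with $\EE[X_i X_i^\tp] = \Sigma^{-1/2}\Sigma\Sigma^{-1/2} = I_K$ and, by the definition of $\zeta_K$, the almost-sure bound $\|X_i\|_2 \le \zeta_K$. Writing $Z_i := \frac{1}{n}(X_i X_i^\tp - I_K)$, we have $\Sigma^{-1/2}\hat\Sigma\Sigma^{-1/2} - I_K = \sum_{i=1}^n Z_i$, a sum of independent, mean-zero, symmetric $K\times K$ matrices.

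Next I would record the two parameters needed for matrix Bernstein. For the uniform bound, $X_i X_i^\tp$ is rank one with top eigenvalue $\|X_i\|_2^2 \le \zeta_K^2$, so $\|X_i X_i^\tp - I_K\| \le \zeta_K^2$ and hence $\|Z_i\| \le \zeta_K^2/n =: R$. For the variance proxy, note that $\EE[(X_iX_i^\tp - I_K)^2] = \EE[\|X_i\|_2^2\, X_i X_i^\tp] - I_K \preceq \zeta_K^2\, \EE[X_iX_i^\tp] - I_K \preceq \zeta_K^2 I_K$, so that $v := \big\|\sum_i \EE[Z_i^2]\big\| \le \zeta_K^2/n$. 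Matrix Bernstein then yields, for any $\tau > 0$,
\[
\Pr\!\left( \big\|\Sigma^{-1/2}\hat\Sigma\Sigma^{-1/2} - I_K\big\| \ge \tau \right) \le 2K \exp\!\left( \frac{-\tau^2/2}{v + R\tau/3} \right).
\]
Setting the right-hand side equal to the target failure level $(nK)^{-2}$ forces $\tau \asymp \sqrt{v\,L} + R\,L$ with $L \asymp \log(nK)$; substituting $R, v \lesssim \zeta_K^2/n$ produces exactly the two stated terms $\zeta_K\sqrt{L}/\sqrt{n}$ and $\zeta_K^2 L/n$, which is \eqref{eqn:mat_con1} (the logarithmic factor $\log n \log K \log\zeta_K$ being a generous upper bound for $L$).

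Finally, the inverse bound \eqref{eqn:mat_con2} follows by a resolvent argument. Abbreviating $A := \Sigma^{-1/2}\hat\Sigma\Sigma^{-1/2}$ and $\epsilon := \|A - I_K\|$, the first part gives $\epsilon \lesssim \zeta_K\sqrt{L}/\sqrt{n} + \zeta_K^2 L/n$ W.H.P. Under the extra assumption $\zeta_K^2 \log n \log K \log\zeta_K = \smallO(n)$ we have $\epsilon = \smallO(1)$, so $A$ is invertible with $\Sigma^{1/2}\hat\Sigma^{-1}\Sigma^{1/2} = A^{-1}$, and the identity $A^{-1} - I_K = -A^{-1}(A - I_K)$ combined with $\|A^{-1}\| \le (1-\epsilon)^{-1} \le 2$ gives $\|A^{-1} - I_K\| \le 2\epsilon$. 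Since $\epsilon = \smallO(1)$, the quadratic term $\zeta_K^2 L/n$ is dominated by $\zeta_K\sqrt{L}/\sqrt{n}$, leaving only the first term and hence \eqref{eqn:mat_con2}. The main obstacle I anticipate is book-keeping rather than conceptual: verifying the operator-norm variance bound $\EE[(X_iX_i^\tp - I_K)^2] \preceq \zeta_K^2 I_K$ cleanly, and then matching the exact logarithmic factors in the statement to what the tail inequality delivers (in particular the $\log\zeta_K$ factor is looser than the $\log(nK)$ that Bernstein strictly requires, so one need only argue the stated quantity is a valid upper bound). The conversion from the tail probability to the W.H.P.\ statement and the inversion step are both routine.
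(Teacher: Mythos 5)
Your proof is correct, and it is structurally the same two-step argument as the paper's: first concentrate the normalized empirical Gram matrix $\Sigma^{-1/2}\hat\Sigma\Sigma^{-1/2}$ around $I_K$, then convert that into a bound on the inverse. The difference is that the paper outsources both steps to cited results --- Lemma B.5 of \citet{duan2020minimax} for \eqref{eqn:mat_con1} and Lemma F.4 of \citet{chen2018optimal} for \eqref{eqn:mat_con2} --- whereas you prove them from scratch via matrix Bernstein (with the correct uniform bound $R=\zeta_K^2/n$ and variance proxy $v\le\zeta_K^2/n$, both of which check out) and a resolvent identity $A^{-1}-I_K=-A^{-1}(A-I_K)$ with $\|A^{-1}\|\le(1-\epsilon)^{-1}$. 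Your version is self-contained and arguably cleaner; the only caveats are cosmetic: matrix Bernstein naturally delivers the deviation level $\tau\asymp\zeta_K\sqrt{\log(nK)/n}+\zeta_K^2\log(nK)/n$, so matching the stated factor $\log n\log K\log\zeta_K$ requires (as you note) that this product dominates $\log(nK)$, which holds once $K$ and $\zeta_K$ exceed a fixed constant --- the same implicit looseness already present in the lemma as stated. The absorption of the quadratic term into the linear one in \eqref{eqn:mat_con2} under $\zeta_K^2\log n\log K\log\zeta_K=\smallO(n)$ is also handled correctly.
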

    \begin{proof}
        By applying Lemma B.5 in \cite{duan2020minimax}, we obtain \eqref{eqn:mat_con1}. Next, we condition on the event that $\eqref{eqn:mat_con1}$ holds, under the condition that $\zeta^2_K \log n \log K \log \zeta_K = \smallO(n) $, we have 
        \begin{align*}
              \left\|  \Sigma^{-1/2}  \hat{\Sigma}  \Sigma^{-1/2} - I_K \right\| \leq \frac{1}{2}  = \frac{1}{2} \lambda_{\min}(I_K).
        \end{align*}
        Then we apply Lemma F.4 in \cite{chen2018optimal} and obtain 
        \begin{align*}
               \left\|  \Sigma^{1/2}  \hat{\Sigma}^{-1}  \Sigma^{1/2} - I_K \right\| \leq 2(1+\sqrt{5}) [\lambda_{\min}(I_K)]^{-2}  \left\|  \Sigma^{1/2}  \hat{\Sigma}^{-1}  \Sigma^{1/2} - I_K \right\|  \lesssim \frac{\zeta_K \sqrt{\log n \log K \log \zeta_K }}{\sqrt{n}} .
        \end{align*}
    \end{proof}

    \begin{lemma}
        \label{lem:l2bound}
        Suppose the covering number of space $\mathcal{Q}(1)$ satisfies that  $N(\mathcal{Q}(1), \|\cdot\|_{\infty}, \epsilon) \leq \exp(A \epsilon^{-\alpha}) $ for some constant $A>0$ and $\alpha < 2$. Then with  probability at least $1-(nK)^{-2}$, we have 
        \begin{align*}
            \sup_{h \in \mathcal{Q}(1)}\left\| \Sigma^{\frac{-1}{2}}\left\{ \frac{1}{n}\sum_{i=1}^n \phi_K(S_i, A_i) h(S_i, A_i)   - \EE \left[ \phi_K(S, A)  h (S,A)\right]\right\} \right\|_2 \leq  C\frac{\zeta_K \sqrt{\log N \log K}}{\sqrt{n}},
        \end{align*}
        where the constant $C$ depends on $A$ and $\alpha$.
    \end{lemma}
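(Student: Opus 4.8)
The plan is to recognize the quantity as the supremum of a centered empirical process indexed by $\mathcal{Q}(1)$ and control it by Dudley's entropy bound for the expectation, followed by a Talagrand-type concentration inequality. First I would linearize the Euclidean norm: writing $\|v\|_2 = \sup_{\|u\|_2 = 1} u^\tp v$, the target equals $\sup_{h \in \mathcal{Q}(1)} \sup_{\|u\|_2=1} \frac{1}{n}\sum_{i=1}^n \{ g_u(S_i,A_i) h(S_i,A_i) - \EE[g_u h]\}$, where $g_u := u^\tp \Sigma^{-1/2}\phi_K$. This reduces the problem to a scalar empirical process indexed by the product class $\mathcal{F} = \{ g_u h : \|u\|_2 = 1,\ h \in \mathcal{Q}(1)\}$. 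Two structural facts drive the bound: every $f \in \mathcal{F}$ has envelope $\|f\|_{\infty} \le \zeta_K$ (since $|g_u| \le \|\Sigma^{-1/2}\phi_K\|_2 \le \zeta_K$ and $\|h\|_{\infty} \le 1$), and weak variance $\sup_{f}\EE f^2 \le 1$, because $\EE g_u^2 = u^\tp \Sigma^{-1/2}\EE[\phi_K\phi_K^\tp]\Sigma^{-1/2} u = \|u\|_2^2 = 1$ while $\|h\|_{\infty} \le 1$.

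Next I would bound the expected supremum via Dudley's entropy integral. The key is an $\calL_2$-covering estimate for $\mathcal{F}$: from $\|g_u h - g_{u'}h'\|_{\calL_2} \le \|h-h'\|_{\infty}\|g_u\|_{\calL_2} + \|h'\|_{\infty}\|g_{u-u'}\|_{\calL_2} \le \|h-h'\|_{\infty} + \|u-u'\|_2$, a $\tau$-net of $\mathcal{F}$ is obtained from a $(\tau/2)$-net of $\mathcal{Q}(1)$ in $\|\cdot\|_{\infty}$ and a $(\tau/2)$-net of the unit sphere in $\RR^{K|\calA|}$, so $\log N(\mathcal{F}, \|\cdot\|_{\calL_2}, \tau) \lesssim A\tau^{-\alpha} + K\log(C/\tau)$. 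Dudley's bound then gives $\EE\sup_{\mathcal{F}} \lesssim n^{-1/2}\int_0^{1}\sqrt{A\tau^{-\alpha} + K\log(C/\tau)}\,d\tau$. Here the hypothesis $\alpha < 2$ is exactly what makes $\int_0^1 \tau^{-\alpha/2}\,d\tau$ finite, so the entropy of $\mathcal{Q}(1)$ contributes only an absolute constant, while the linear (sphere) part contributes $\bigO(\sqrt{K})$. Using $\zeta_K^2 \ge \EE\|\Sigma^{-1/2}\phi_K\|_2^2 = K$, both pieces are $\bigO(\zeta_K/\sqrt{n})$.

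Finally I would upgrade the in-expectation bound to the high-probability statement by Talagrand's concentration inequality for the supremum of an empirical process. With weak variance at most $1$ and envelope $\zeta_K$, taking the failure level $\delta = (nK)^{-2}$ adds a fluctuation of order $\sqrt{\log(nK)/n} + \zeta_K\log(nK)/n$, both absorbed into $\zeta_K\sqrt{\log n \log K}/\sqrt{n}$. Combining with the Dudley bound yields the stated inequality on an event of probability at least $1-(nK)^{-2}$, with the constant $C$ depending only on $A$ and $\alpha$.

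The main obstacle is the passage from the infinite index class $\mathcal{Q}(1)$ to a finite-sample deviation bound at the sharp rate. A single-scale discretization fails: balancing the discretization error $\zeta_K\epsilon$ against the union-bound entropy term $\sqrt{\epsilon^{-\alpha}/n}$ produces an exponent of order $\zeta_K^{\alpha/(2+\alpha)}n^{-1/(2+\alpha)}$, which exceeds the target $\zeta_K/\sqrt{n}$ unless $\zeta_K$ is already large. Multi-scale chaining is therefore essential, and $\alpha < 2$ is precisely the Donsker-type condition guaranteeing convergence of the entropy integral; the remaining care is to prevent the vector (sphere) net from inflating the bound beyond $\zeta_K/\sqrt{n}$, which is handled by the elementary inequality $\zeta_K^2 \ge K$.
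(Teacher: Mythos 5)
Your route is sound in outline and reaches the stated rate, but it is genuinely different from the paper's. You linearize the norm over the unit sphere and run a single Dudley-plus-Talagrand argument on the product class $\{g_u h\}$, absorbing the $\sqrt{K|\calA|}$ entropy of the sphere via $\zeta_K^2 \ge K|\calA|$. The paper instead bounds the \emph{second moment} of the supremum directly: after symmetrization it expands the squared norm, bounds the diagonal deterministically by $\zeta_K^2/n$, and treats the off-diagonal as a decoupled second-order Rademacher chaos indexed by $h$ alone, with metric $\lesssim \sigma \zeta_K^2 \|h-h'\|_\infty$, so that the only entropy integral is $\int_0^{\zeta_K^2} (\epsilon/\zeta_K^2)^{-\alpha/2}\,d\epsilon \asymp \zeta_K^2$ (this is where $\alpha<2$ enters for them too); concentration is then obtained by McDiarmid with bounded differences $\zeta_K/n$, which is where the $\sqrt{\log n \log K}$ in the statement comes from. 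Your Talagrand step, using weak variance $\le 1$ and envelope $\zeta_K$, actually yields a sharper deviation term ($\sqrt{\log(nK)/n}$ plus lower order) than the paper's $\zeta_K\sqrt{\log(nK)}/\sqrt{n}$, so it comfortably implies the claim.

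The one step you should not gloss over is which metric your chaining is performed in. After symmetrization, the Rademacher process conditional on the data is subgaussian with respect to the \emph{empirical} $L_2(P_n)$ metric, not the population $\calL_2$ metric you use for the covering estimate. Translating your bound $\|g_u h - g_{u'}h'\|_{\calL_2} \le \|h-h'\|_\infty + \|u-u'\|_2$ into $L_2(P_n)$ requires $\|g_u\|_{L_2(P_n)}^2 = u^\tp \Sigma^{-1/2}\hat{\Sigma}\Sigma^{-1/2} u \lesssim 1$, i.e.\ control of $\|\Sigma^{-1/2}\hat{\Sigma}\Sigma^{-1/2} - I\|$; without it the sphere part of the entropy integral inflates to $\sqrt{K}\,\zeta_K$, which overshoots the target by $\sqrt{K}$. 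This event does hold with probability $1-(nK)^{-2}$ (the paper's Lemma \ref{lem:matrix}), but only under a side condition of the form $\zeta_K^2 \log n \log K \log\zeta_K = \smallO(n)$, which is not part of the hypotheses of this lemma. The paper's diagonal/off-diagonal decomposition avoids the sphere net and hence this dependence entirely. So either condition on the $\hat{\Sigma}\approx\Sigma$ event explicitly (importing the corresponding assumption), or replace the sphere linearization for the $u$-direction by a direct second-moment bound as the paper does.
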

    \begin{proof}
        Take $r_i$, $i = 1,\dots, n$ as independent Rademacher random variables. Then by symmetrization inequality, we have
        \begin{multline*}
           \EE \sup_{ h \in \mathcal{Q}{(1)}}   \left\| \Sigma^{\frac{-1}{2}}\left\{ \frac{1}{n}\sum_{i=1}^n \phi_K(S_i, A_i) h(S_i, A_i)   - \EE \left[ \phi_K(S, A)  h (S,A)\right]\right\} \right\|^2_2 \\
           \lesssim \EE \sup_{ h \in \mathcal{Q}{(1)}}   \left\| \Sigma^{\frac{-1}{2}}\left\{ \frac{1}{n}\sum_{i=1}^n \phi_K(S_i, A_i) h(S_i, A_i)r_i \right\} \right\|^2_2.
        \end{multline*} 
        \begin{align*}
     &  \EE \sup_{ h \in \mathcal{Q}{(1)}}   \left\| \Sigma^{\frac{-1}{2}}\left\{ \frac{1}{n}\sum_{i=1}^n \phi_K(S_i, A_i) h(S_i, A_i)r_i \right\} \right\|^2_2    \\
       \leq & \frac{1}{n^2}  \sum_{i=1}^n  \EE \sup_{ h \in \mathcal{Q}^{(1)}} \|\Sigma^{-1/2} \phi_K(S_i, A_i) h(S_i, A_i) r_i \|^2_{2} \\
       & + \frac{1}{n^2} \sum_{i\neq j}\EE \sup_{ h \in \mathcal{Q}{(1)}} r_i r_jh(S_i, A_i) h(S_j, A_j) \phi_K(S_i, A_i)^\tp \Sigma^{-1} \phi_K(S_j, A_j) \\
       \leq & \frac{1}{n}  \zeta_K^2 + \frac{1}{n^2} \sum_{i\neq j}\EE \sup_{ h \in \mathcal{Q}^{(1)}} r_i r_jh(S_i, A_i) h(S_j, A_j) \phi_K(S_i, A_i)^\tp \Sigma^{-1} \phi_K(S_j, A_j) 
        \end{align*}
    
        Next, we focus on bounding  $$\frac{1}{n^2} \sum_{i\neq j}\EE \sup_{ h \in \mathcal{Q}^{(1)}} r_i r_jh(S_i, A_i) h(S_j, A_j) \phi_K(S_i, A_i)^\tp \Sigma^{-1} \phi_K(S_j, A_j) .$$
    As $r_i$, $i=1,\dots, n$ are Rademacher random variables, then there exists a constant $\sigma^2$ such that $\EE \exp(\lambda r_ir_j) \leq \exp(\lambda^2 \sigma^2/2)$ for any $\lambda>0$.
    Conditioned on $(S_i, A_i)$, $i = 1,\dots, n$, then 
    $$r_ir_j[ h_1(S_i, A_i) h_1(S_j, A_j) -h_2(S_i, A_i) h_2(S_j, A_j)] \phi_K(S_i, A_i)^\tp \Sigma^{-1} \phi_K(S_j, A_j)$$ is a subgaussian random variable with parameter $$[ h_1(S_i, A_i) h_1(S_j, A_j) -h_2(S_i, A_i) h_2(S_j, A_j)]^2 \sigma^2.$$
    Define 
    $$ \mathcal{S}(h) =   \frac{1}{n} \sum_{i\neq j}  r_i r_jh(S_i, A_i) h(S_j, A_j) \phi_K(S_i, A_i)^\tp \Sigma^{-1} \phi_K(S_j, A_j) .$$
    We know that $r_ir_j$ is independent of $r_{i'} r_{j'}$ as long as either $i\neq i'$ or $j \neq j'$, then conditioned on $(S_i, A_i)$, $i = 1,\dots, n$,  $S(h_1) - S(h_2)$ is a subgaussian with parameter
    \begin{align*}
        d^2(h_1, h_2) = \frac{1}{n^2} \sum_{i\neq j} [ h_1(S_i, A_i) h_1(S_j, A_j) -h_2(S_i, A_i) h_2(S_j, A_j)]^2 \sigma^2.
    \end{align*}
    
    Next, we derive $H^{1/2}(\mathcal{Q}(1), d, \epsilon)$. We know that the covering number $N(\mathcal{Q}, \|\cdot\|_{\infty}, \epsilon) \leq \exp(A \epsilon^{-\alpha})$. Consider $\mathcal{N} \subset \mathcal{Q}(1)$  as the $\epsilon$-net of $\mathcal{Q}(1)$
    with respect to $\|\cdot\|_\infty$. 
     By definition, for any $h \in \mathcal{Q}(1)$,  there exists a $u_0\in \mathcal{N}$, such that
        \begin{align}
            \sup_{(s,a)} |h(s,a)- h_0(s,a)| \leq \epsilon.
        \end{align}
     Then 
     \begin{align*}
         d^2(h, h_0) = & \frac{1}{n^2} \sum_{i\neq j} [ h(S_i, A_i) h(S_j, A_j) -h_0(S_i, A_i) h_0(S_j, A_j) \phi_K(S_i, A_i)^\tp \Sigma^{-1} \phi_K(S_j, A_j)]^2 \sigma^2  \\
         \leq & \frac{1}{n^2}\sigma^2  \sup_{s,a} \| \Sigma^{-1/2} \phi_K(s,a)\|^4_2  \sum_{i\neq j} [ h(S_i, A_i) h(S_j, A_j) -h_0(S_i, A_i) h_0(S_j, A_j)]^2  \\
         \leq & \frac{1}{n^2}\sigma^2 \zeta_K^4 \sum_{i\neq j} \{h(S_i, A_i)(h(S_j, A_j) - h_0(S_j, A_j)) + h_0(S_j, A_j)(h(S_i, A_i) - h_0(S_i, A_i) \}^2 \\
         \leq & \frac{4}{n^2}\sigma^2 \zeta_K^4 \sum_{i\neq j} (2\epsilon)^2 \leq 4\sigma^2 \zeta_K^4 \epsilon^2.
     \end{align*}
     Therefore we have
     \begin{align*}
         N(\mathcal{Q}(1), d, 2\sigma \zeta_K^2 \epsilon) \leq N(\mathcal{Q}(1), \|\cdot\|_{\infty}, \epsilon) \\
         N(\mathcal{Q}(1), d, \epsilon) \leq N(\mathcal{Q}(1), \|\cdot\|_{\infty}, \epsilon/(2\sigma \zeta_K^2)) \leq \exp\left\{ A\left(\frac{\epsilon}{2\sigma \zeta_K^2}\right)^{-\alpha} \right\}.
     \end{align*}
    
    Following Dudley’s entropy bounds, we have
    \begin{align*}
        \EE \sup_{h \in \mathcal{Q}(1)} \mathcal{S}(h) &\lesssim \EE \int_{0}^D H^{1/2}(\mathcal{Q}(1), d, \epsilon) d \epsilon\\
       & \leq \int_{0}^{\zeta_K^2} \left\{ A\left(\frac{\epsilon}{2\sigma \zeta_K^2}\right)^{-\alpha} \right\}^{1/2} d\epsilon  \leq C \zeta_K^2,
    \end{align*}
     where $D = \sup_{h \in \mathcal{Q}^{(1)}} \sqrt{\frac{1}{n^2} \sum_{i\neq j} [h(S_i, A_i) h(S_j, A_j) \phi_K(S_i, A_i)^\tp \Sigma^{-1} \phi_K(S_j, A_j)]^2 } \leq \sup_{s,a} \| \Sigma^{-1/2} \phi_K(s,a)\|^2_2 \leq \zeta_K^2$, $C$ is a constant depending on $A$ and $\alpha$.
    
    Therefore we have 
    \begin{align*}
     & \frac{1}{n^2} \sum_{i\neq j}\EE \sup_{ h \in \mathcal{Q}^{(1)}} r_i r_jh(S_i, A_i) h(S_j, A_j) \phi_K(S_i, A_i)^\tp \Sigma^{-1} \phi_K(S_j, A_j) \lesssim \frac{1}{n} \zeta_K^2 \\
      & \EE \sup_{ h \in \mathcal{Q}{(1)}}   \left\| \Sigma^{\frac{-1}{2}}\left\{ \frac{1}{n}\sum_{i=1}^n \phi_K(S_i, A_i) h(S_i, A_i)   - \EE \left[ \phi_K(S, A)  h (S,A)\right]\right\} \right\|_2 \\
    \leq & \left[\EE \sup_{ h \in \mathcal{Q}{(1)}}   \left\| \Sigma^{\frac{-1}{2}}\left\{ \frac{1}{n}\sum_{i=1}^n \phi_K(S_i, A_i) h(S_i, A_i)   - \EE \left[ \phi_K(S, A)  h (S,A)\right]\right\} \right\|^2_2 \right]^{1/2} \\
      \lesssim &  \left[\EE \sup_{ h \in \mathcal{Q}{(1)}}   \left\| \Sigma^{\frac{-1}{2}}\left\{ \frac{1}{n}\sum_{i=1}^n \phi_K(S_i, A_i) h(S_i, A_i)r_i \right\} \right\|^2_2 \right]^{1/2} \lesssim \frac{1}{\sqrt{n}} \zeta_K.
    \end{align*}
    
    Next, take $X_i = (S_i, A_i)$ and  define 
    \begin{align*}
        g(X_1,\dots, X_n) =  \sup_{h \in \mathcal{Q}(1)}\left\| \Sigma^{\frac{-1}{2}}\left\{ \frac{1}{n}\sum_{i=1}^n \phi_K(S_i, A_i) h(S_i, A_i)   - \EE \left[ \phi_K(S, A)  h (S,A)\right]\right\} \right\|_2
    \end{align*}
    We have
    {\scriptsize
    \begin{align*}
       &  |g(X_1,\dots, X_j, \dots, X_n) - g(X_1,\dots, X'_{j}, \dots, X_n)| \\
         \leq &  \sup_{h \in \mathcal{Q}(1)} \left| \left\| \Sigma^{\frac{-1}{2}}\left\{ \frac{1}{n}\sum_{i\neq j} \phi_K(S_i, A_i) h(S_i, A_i)   - \EE \left[ \phi_K(S, A)  h (S,A)\right] + \frac{1}{n} \phi_K(S_j, A_j) h(S_j, A_j)   - \EE \left[ \phi_K(S, A)  h (S,A)\right] \right\} \right\|_2 \right.\\
         & \qquad \left. -\left\| \Sigma^{\frac{-1}{2}}\left\{ \frac{1}{n}\sum_{i\neq j} \phi_K(S_i, A_i) h(S_i, A_i)   - \EE \left[ \phi_K(S, A)  h (S,A)\right]+ \frac{1}{n} \phi_K(S'_j, A'_j) h(S'_j, A'_j)   - \EE \left[ \phi_K(S, A)  h (S,A)\right]   \right\} \right\|_2 \right| \\
         \leq &  \sup_{h \in \mathcal{Q}(1)} \left\| \Sigma^{\frac{-1}{2}} \frac{1}{n}  \left[ \phi_K(S_j, A_j) h(S_j, A_j)   -\phi_K(S'_j, A'_j) h(S'_j, A'_j)   \right] \right\|_2 \\
         \leq & \frac{1}{n} \sup_{s,a}\|\Sigma^{-1/2} \phi_K(s,a)\|_2 \leq \frac{1}{n} \zeta_K,
    \end{align*}}
    where the first and the second inequality is due to the triangle inequality for $\sup$ and $\|\cdot\|_2$.
    Then we are able to use the bounded difference inequality and we have
    \begin{align*}
      &  \Pr \left( \sup_{h \in \mathcal{Q}(1)}\left\| \Sigma^{\frac{-1}{2}}\left\{ \frac{1}{n}\sum_{i=1}^n \phi_K(S_i, A_i) h(S_i, A_i)   - \EE \left[ \phi_K(S, A)  h (S,A)\right]\right\} \right\|_2 \right. \\
      & \qquad  \left. - \EE \sup_{h \in \mathcal{Q}(1)}\left\| \Sigma^{\frac{-1}{2}}\left\{ \frac{1}{n}\sum_{i=1}^n \phi_K(S_i, A_i) h(S_i, A_i)   - \EE \left[ \phi_K(S, A)  h (S,A)\right]\right\} \right\|_2 \ge t \right) \\
      \leq &  \exp \left\{ -\frac{2t^2}{n (\zeta_K/n)^2}\right\} =  \exp \left\{ -\frac{2nt^2}{\zeta^2_K}\right\}.
    \end{align*}
     By taking $t = \frac{  \zeta_K \sqrt{\log n \log K}}{\sqrt{n}}$, we obtain the result. 
    \end{proof}

    \begin{lemma}
        \label{lem:point}
        Suppose the covering number of space $\mathcal{Q}^{(t+1)}(1)$ satisfies that  $N(\mathcal{Q}^{(t+1)}(1), \|\cdot\|_{\infty}, \epsilon) \leq \exp(A \epsilon^{-\alpha}) $ for some constant $A>0$ and $\alpha \leq 2$. And we assume that $\zeta_K = \bigO(\sqrt{n})$, then for any $(s,a)$, %with  probability at least $1-c \exp\{-2 c^{-1} \sqrt{n} \zeta_K^{-1}\}$ with some universal constant $C>0$, we have 
        we have 
        % \begin{align*}
            % \sup_{f \in \mathcal{Q}^{(t+1)}(1)}\left| \phi_K(s,a)^\tp \Sigma^{-1}\left\{\frac{1}{n}\sum_{i=1}^n \phi_K(S_i, A_i)  \left[ f^\pi(S_i') -  \mathcal{P}_{t}^\pi f(S_i, A_i) \right]\right\} \right| \leq  C\frac{\sqrt{\|\Sigma^{-1/2}\phi_K(s,a)\|_2 \zeta_K} }{\sqrt{n}},
        % \end{align*}
        \begin{align*}
            \left|  \sup_{f \in \mathcal{Q}^{(t+1)}(1)} \phi_K(s,a)^\tp \Sigma^{-1}\left\{\frac{1}{n}\sum_{i=1}^n \phi_K(S_i, A_i)  \left[ f^\pi(S_i') -  \mathcal{P}_{t}^\pi f(S_i, A_i) \right]\right\} \right| = \bigO\left( \frac{\|\Sigma^{-1/2}\phi_K(s,a)\|_2 \sqrt{\log n \log T}}{\sqrt{n}} \right), \text{ W.H.P}.
          \end{align*}
          uniformly holds for all $t=1,\dots, T$.
        % where the constant $C$ depends on $A$ and $\alpha$.
    \end{lemma}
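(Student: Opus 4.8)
Fix the time index $t$ and the point $(s,a)$, and abbreviate $b = \|\Sigma^{-1/2}\phi_K(s,a)\|_2$, $w_i = \phi_K(s,a)^\tp \Sigma^{-1}\phi_K(S_i,A_i)$ and $\xi_i(f) = f^\pi(S_i') - \mathcal{P}_t^\pi f(S_i,A_i)$ for $f \in \mathcal{Q}^{(t+1)}(1)$. The object to be controlled is then $\sup_f|Z_n(f)|$, where $Z_n(f) = \frac1n\sum_{i=1}^n w_i\xi_i(f)$ is a scalar empirical process. Since $\mathcal{P}_t^\pi f(S_i,A_i) = \EE[f^\pi(S_i')\mid S_i,A_i]$, each summand is conditionally centered, so $\EE Z_n(f) = 0$. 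Two moment computations drive everything: because $\|f\|_\infty\le 1$ we have $|\xi_i(f)|\le 2$ and $|w_i|\le \zeta_K b$, giving the envelope $|w_i\xi_i(f)|\le 2\zeta_K b$; and since $\EE[w_1^2] = \phi_K(s,a)^\tp\Sigma^{-1}\EE[\phi_K\phi_K^\tp]\Sigma^{-1}\phi_K(s,a) = b^2$ and $\xi_i(f)^2\le 4$, the variance proxy is $\EE[(w_i\xi_i(f))^2]\le 4b^2$. The envelope sets the scale $\zeta_K b/n$ of a single term, while the variance sets the scale $b/\sqrt n$ of the whole average; the gap between these scales is exactly the source of the $\zeta_K$ that appears only in the tail.

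\textbf{Key steps.} First I would bound $\EE\sup_f|Z_n(f)|$, following the symmetrization-plus-Dudley template of Lemma \ref{lem:l2bound}. After symmetrizing the conditionally centered sum by Rademacher multipliers $r_i$, the process $f\mapsto \frac1n\sum r_i w_i\xi_i(f)$ is, conditionally on the data, sub-Gaussian with respect to a random metric $d(f,f')$ bounded by $\|f-f'\|_\infty\sqrt{\frac1{n^2}\sum_i w_i^2}$; on the event $\{\|\Sigma^{-1/2}\hat\Sigma\Sigma^{-1/2}-I_K\|\le 1/2\}$, which holds with probability at least $1-(nK)^{-2}$ by Lemma \ref{lem:matrix}, one has $\frac1n\sum_i w_i^2\lesssim b^2$, so $d(f,f')\lesssim b\|f-f'\|_\infty/\sqrt n$ (the small complementary event being bounded crudely). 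Dudley's entropy integral then gives $\EE\sup_f|Z_n(f)|\lesssim \int_0^{\,cb/\sqrt n}\sqrt{\log N(\mathcal{Q}^{(t+1)}(1),d,\epsilon)}\,d\epsilon$, and substituting $\log N(\mathcal{Q}^{(t+1)}(1),\|\cdot\|_\infty,\delta)\le A\delta^{-\alpha}$ (after the change of variables $\delta = \epsilon\sqrt n/b$) the integral evaluates to a constant multiple of $b/\sqrt n$. Its finiteness is precisely where $\alpha<2$ enters, as the exponent on $\epsilon$ is $-\alpha/2>-1$; at the borderline $\alpha=2$ a truncated integral contributes only an extra logarithmic factor, which the $\sqrt{\log n}$ in the stated rate absorbs. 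Crucially, no factor of $\zeta_K$ survives into the expected supremum.

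\textbf{Concentration and main obstacle.} The remaining task is to upgrade the bound on the expectation to a high-probability statement at rate $b\sqrt{\log n\log T}/\sqrt n$, and then to make it uniform over $t=1,\dots,T$. For the former I would apply a variance-sensitive concentration inequality for suprema of empirical processes (a Talagrand/Bousquet-type bound) with weak variance $\sup_f\frac1n\sum_i\EE[(w_i\xi_i(f))^2]\lesssim b^2$ and envelope $2\zeta_K b$; this keeps the typical fluctuation at scale $b/\sqrt n$ and charges the excess deviation $\asymp b\sqrt{\log n\log T}/\sqrt n$ to a tail of order $\exp(-c\min\{\log n\log T,\ \sqrt n\sqrt{\log n\log T}/\zeta_K\})$, which tends to $0$. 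A union bound over $t=1,\dots,T$ (feasible because $T$ is polynomial in $n$ under the paper's assumptions) then absorbs the factor $T$ through the $\log T$ in the rate. The main obstacle is this concentration step, for two linked reasons. First, the metric entropy $N(\epsilon)\le\exp(A\epsilon^{-\alpha})$ is far too large for a single-scale net-plus-union-bound argument: forcing the discretization error $2\epsilon\zeta_K b$ below the target requires $\epsilon\lesssim \sqrt{\log n\log T}/(\zeta_K\sqrt n)$, at which scale $\log N(\epsilon)$ dwarfs any Bernstein exponent available at deviation $b\sqrt{\log n\log T}/\sqrt n$; this is what makes genuine multi-scale chaining, and hence the hypothesis $\alpha<2$, essential. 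Second, one cannot substitute a crude bounded-differences (McDiarmid) inequality as in Lemma \ref{lem:l2bound}, since its increment $\zeta_K b/n$ would inflate the rate by a spurious factor $\zeta_K$; it is essential that the concentration use the variance $b^2$ rather than the range $\zeta_K b$, with $\zeta_K=\bigO(\sqrt n)$ ensuring the envelope contribution to the tail stays negligible.
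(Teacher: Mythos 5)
Your proposal follows essentially the same route as the paper's proof: symmetrization, Dudley chaining under the random metric controlled by the $\|\cdot\|_\infty$ covering number (yielding $\EE\sup_f|Z_n(f)|\lesssim \|\Sigma^{-1/2}\phi_K(s,a)\|_2/\sqrt n$ with no surviving $\zeta_K$), then a Talagrand-type concentration inequality driven by the variance proxy $b^2$ and envelope $\zeta_K b$, and finally a union bound over $t=1,\dots,T$. Your explicit observation that a bounded-differences argument would spuriously inflate the rate by $\zeta_K$ correctly identifies why the paper switches from the McDiarmid step of Lemma \ref{lem:l2bound} to Talagrand here, so the proposal is correct and matches the paper's argument.
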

    
    \begin{proof}
     Take $r_i$, $i = 1,\dots, n$ as independent Rademacher random variables. Then by symmetrization inequality, we have
        \begin{align*}
          &  \EE \sup_{f \in \mathcal{Q}^{(t+1)}(1)}\left| \phi_K(s,a)^\tp \Sigma^{-1}\left\{\frac{1}{n}\sum_{i=1}^n \phi_K(S_i, A_i)  \left[ f^\pi(S_i') -  \mathcal{P}_{t}^\pi f(S_i, A_i) \right]\right\} \right|  \\
           \lesssim & \EE \sup_{f \in \mathcal{Q}^{(t+1)}(1)}\left| \phi_K(s,a)^\tp \Sigma^{-1}\left\{\frac{1}{n}\sum_{i=1}^n \phi_K(S_i, A_i)  r_i f^\pi(S_i')\right\} \right|.
        \end{align*}   
    Define 
    $$ \mathcal{S}(f) =    \sqrt{n} \phi_K(s,a)^\tp \Sigma^{-1}\left\{\frac{1}{n}\sum_{i=1}^n \phi_K(S_i, A_i)  r_i f^\pi(S_i') \right\}.$$
    Then conditioned on $(S_i, A_i, S'_i)$, $i = 1,\dots, n$,  $S(f_1 - f_2)$ is a subgaussian process with parameter
    \begin{align*}
        d^2(f_1, f_2) = \frac{1}{n} \sum_{i = 1}^n  [\phi_K(s,a)^\tp \Sigma^{-1}\phi_K(S_i, A_i)]^2 [f_1^\pi(S'_i) - f_2^\pi(S'_i)]^2 .
    \end{align*}
    
    Next, we derive $H^{1/2}(\mathcal{Q}^{(t+1)}(1), d, \epsilon)$. We know that the covering number $N(\mathcal{Q}^{(t+1)}(1), \|\cdot\|_{\infty}, \epsilon) \leq \exp(A \epsilon^{-\alpha})$. Consider $\mathcal{N} \subset \mathcal{Q}^{(t+1)}(1)$  as the $\epsilon$-net of $\mathcal{Q}^{(t+1)}(1)$
    with respect to $\|\cdot\|_\infty$. 
     By definition, for any $f \in \mathcal{Q}^{(t+1)}(1)$,  there exists a $f_0\in \mathcal{N}$, such that
        \begin{align}
            \sup_{(s,a)} |f(s,a)- f_0(s,a)| \leq \epsilon.
        \end{align}
     Then 
     \begin{align*}
        & d^2(f, f_0) = \frac{1}{n} \sum_{i = 1}^n  [\phi_K(s,a)^\tp \Sigma^{-1}\phi_K(S_i, A_i)]^2 [f^\pi(S'_i) - f^\pi_0(S'_i)]^2  \\
         \leq & \frac{1}{n} \sum_{i = 1}^n  [\phi_K(s,a)^\tp \Sigma^{-1}\phi_K(S_i, A_i)]^2 \epsilon^2 \\
         = & [\phi_K(s,a)^\tp \Sigma^{-1} \hat{\Sigma} \Sigma^{-1} \phi_K(s,a)] \epsilon^2.
     \end{align*}
     Therefore we have
     \begin{align*}
         & N(\mathcal{Q}^{(t+1)}(1), d, \sqrt{\phi_K(s,a)^\tp \Sigma^{-1} \hat{\Sigma} \Sigma^{-1} \phi_K(s,a)}  \epsilon)   \leq N(\mathcal{Q}^{(t+1)}(1), \|\cdot\|_{\infty}, \epsilon) \\
        & N(\mathcal{Q}(1), d, \epsilon)\\
        & \leq N(\mathcal{Q}(1), \|\cdot\|_{\infty}, \epsilon/\sqrt{\phi_K(s,a)^\tp \Sigma^{-1} \hat{\Sigma} \Sigma^{-1} \phi_K(s,a)} ) \leq \exp\left\{ A\left(\frac{\epsilon}{\sqrt{\phi_K(s,a)^\tp \Sigma^{-1} \hat{\Sigma} \Sigma^{-1} \phi_K(s,a)} }\right)^{-\alpha} \right\}.
     \end{align*}
    Following Dudley’s entropy bounds, we have
    \begin{align*}
        \EE \sup_{f \in \mathcal{Q}^{(t+1)}(1)}  \mathcal{S}(f) &\lesssim \EE \int_{0}^D H^{1/2}(\mathcal{Q}^{(t+1)}(1), d, \epsilon) d \epsilon\\
       & \leq \EE \int_{0}^{\sqrt{\phi_K(s,a)^\tp \Sigma^{-1} \hat{\Sigma} \Sigma^{-1} \phi_K(s,a)}} \left\{ A\left(\frac{\epsilon}{\sqrt{\phi_K(s,a)^\tp \Sigma^{-1} \hat{\Sigma} \Sigma^{-1} \phi_K(s,a)}}\right)^{-\alpha} \right\}^{1/2} d\epsilon  \\
       & \leq C \EE \sqrt{\phi_K(s,a)^\tp \Sigma^{-1} \hat{\Sigma} \Sigma^{-1} \phi_K(s,a)} \\
       & \leq C \sqrt{\EE [\phi_K(s,a)^\tp \Sigma^{-1} \hat{\Sigma} \Sigma^{-1} \phi_K(s,a)]} \\
       & = C \|\Sigma^{-1/2}\phi_K(s,a)\|_2. % \leq C \zeta_K.
    \end{align*}
     where  $C$ is a constant depending on $A$ and $\alpha$.
     
    Next, we apply the Talagrand concentration inequality.
    \begin{align*}
     \sup_{f \in \mathcal{Q}^{(t+1)}(1)} \left| \phi_K(s,a)^\tp \Sigma^{-1}\left\{\phi_K(S_i, A_i)  \left[ f^\pi(S_i') -  \mathcal{P}_{t}^\pi f(S_i, A_i) \right]\right\} \right|& \leq  2 \|\Sigma^{-1/2}\phi_K(s,a)\|_2 \zeta_K. % \sup_{s,a} \|\phi_K(s,a)^\tp \Sigma^{-1/2}\|_2^2 2 %\leq  2 \zeta_K^2.\\
     \\
     \sup_{f \in \mathcal{Q}^{(t+1)}(1)} \EE \left| \phi_K(s,a)^\tp \Sigma^{-1}\left\{\phi_K(S_i, A_i)  \left[ f^\pi(S_i') -  \mathcal{P}_{t}^\pi f(S_i, A_i) \right]\right\} \right|^2 &\leq 2 \EE \phi_K(s,a)^\tp \Sigma^{-1} \hat{\Sigma} \Sigma^{-1} \phi_K(s,a)  \\
     \leq 2 \|\Sigma^{-1/2}\phi_K(s,a)\|_2^2%\leq 2\zeta_K^2.
    \end{align*}
    Then we take 
    \begin{align*}
        U & = 2 \|\Sigma^{-1/2}\phi_K(s,a)\|_2 \zeta_K\\
        % V & = n (2\zeta_K^2) + 8 U \sqrt{n} \zeta_K.
        V & = n (2\|\Sigma^{-1/2}\phi_K(s,a)\|_2^2) + 8 U \sqrt{n} \|\Sigma^{-1/2}\phi_K(s,a)\|_2.
    \end{align*}
       There exists a universal constant $c>0$ such that for every $t>0$,
       \begin{align*}
          & \Pr \left( \left|  \left|  \sup_{f \in \mathcal{Q}^{(t+1)}(1)} \phi_K(s,a)^\tp \Sigma^{-1}\left\{\frac{1}{n}\sum_{i=1}^n \phi_K(S_i, A_i)  \left[ f^\pi(S_i') -  \mathcal{P}_{t}^\pi f(S_i, A_i) \right]\right\} \right| \right. \right. \\
         & \qquad  \left. \left. - \EE  \left| \sup_{f \in \mathcal{Q}^{(t+1)}(1)} \phi_K(s,a)^\tp \Sigma^{-1}\left\{\frac{1}{n}\sum_{i=1}^n \phi_K(S_i, A_i)  \left[ f^\pi(S_i') -  \mathcal{P}_{t}^\pi f(S_i, A_i) \right]\right\} \right| \right| > t \right) \\
         \leq & c \exp\left\{ -\frac{1}{c} \frac{nt}{2 \|\Sigma^{-1/2}\phi_K(s,a)\|_2 \zeta_K} \log\left(1 + \frac{nt2 \|\Sigma^{-1/2}\phi_K(s,a)\|_2 \zeta_K}{n (2\|\Sigma^{-1/2}\phi_K(s,a)\|_2^2) + 8 U \sqrt{n} \|\Sigma^{-1/2}\phi_K(s,a)\|_2}\right)\right\} \\
        %  = &  c \exp\left\{ -\frac{1}{c} \frac{nt}{2 \zeta_K^2} \log\left(1 + \frac{2nt\zeta_K^2}{2n\zeta_K^2 + 16\zeta_K^3 \sqrt{n}}\right)\right\}  \\
         \lesssim & c \exp\left\{ -\frac{1}{c} \frac{nt}{2 \|\Sigma^{-1/2}\phi_K(s,a)\|_2 \zeta_K} \log\left(1 + \frac{2nt\|\Sigma^{-1/2}\phi_K(s,a)\|_2 \zeta_K}{4n \|\Sigma^{-1/2}\phi_K(s,a)\|_2^2 }\right)\right\} \\
         \lesssim & c \exp\left\{ -\frac{1}{c} \frac{nt}{2  \|\Sigma^{-1/2}\phi_K(s,a)\|_2 \zeta_K} \log\left(1 + t/2\right)\right\}, 
       \end{align*}
      where the second inequality is due to the condition that $\zeta_K = \smallO(\sqrt{n})$. By taking $t = \frac{{\|\Sigma^{-1/2}\phi_K(s,a)\|_2 \sqrt{\log n \log T} }}{\sqrt{n}}$, we obtain 
      \begin{align*}
        & \Pr \left( \left|  \left|  \sup_{f \in \mathcal{Q}^{(t+1)}(1)} \phi_K(s,a)^\tp \Sigma^{-1}\left\{\frac{1}{n}\sum_{i=1}^n \phi_K(S_i, A_i)  \left[ f^\pi(S_i') -  \mathcal{P}_{t}^\pi f(S_i, A_i) \right]\right\} \right| \right. \right. \\
         & \qquad  \left. \left. - \EE  \left| \sup_{f \in \mathcal{Q}^{(t+1)}(1)} \phi_K(s,a)^\tp \Sigma^{-1}\left\{\frac{1}{n}\sum_{i=1}^n \phi_K(S_i, A_i)  \left[ f^\pi(S_i') -  \mathcal{P}_{t}^\pi f(S_i, A_i) \right]\right\} \right| \right| >   \frac{\|\Sigma^{-1/2}\phi_K(s,a)\|_2 \sqrt{\log n \log T}}{\sqrt{n}}  \right)\\
         & \lesssim c\exp \left\{ -\frac{1}{c} \frac{\sqrt{n} \sqrt{\log n \log T} }{\zeta_K} \right\}. \\
      \end{align*}
      Take a union bound over $T$, with the condition that $\zeta_K = \smallO(n)$, we have 
      \begin{align*}
        \left|  \sup_{f \in \mathcal{Q}^{(t+1)}(1)} \phi_K(s,a)^\tp \Sigma^{-1}\left\{\frac{1}{n}\sum_{i=1}^n \phi_K(S_i, A_i)  \left[ f^\pi(S_i') -  \mathcal{P}_{t}^\pi f(S_i, A_i) \right]\right\} \right| \\
        = \bigO\left( \frac{\|\Sigma^{-1/2}\phi_K(s,a)\|_2 \sqrt{\log n \log T}}{\sqrt{n}} \right), \text{ W.H.P}.
      \end{align*}
      uniformly holds for all $t=1,\dots, T$.
    \end{proof}

% You can have as much text here as you want. The main body must be at most $8$ pages long.
% For the final version, one more page can be added.
% If you want, you can use an appendix like this one.  

% The $\mathtt{\backslash onecolumn}$ command above can be kept in place if you prefer a one-column appendix, or can be removed if you prefer a two-column appendix.  Apart from this possible change, the style (font size, spacing, margins, page numbering, etc.) should be kept the same as the main body.
%%%%%%%%%%%%%%%%%%%%%%%%%%%%%%%%%%%%%%%%%%%%%%%%%%%%%%%%%%%%%%%%%%%%%%%%%%%%%%%
%%%%%%%%%%%%%%%%%%%%%%%%%%%%%%%%%%%%%%%%%%%%%%%%%%%%%%%%%%%%%%%%%%%%%%%%%%%%%%%

\end{document}